\begin{document}
\newtheorem{theorem}{Theorem}
\newtheorem*{thm*}{Theorem}
\newtheorem{lemma}[theorem]{Lemma}
\newtheorem{lem}[theorem]{Lemma}
\newtheorem{prop}[theorem]{Proposition}
\newtheorem{cor}[theorem]{Corollary}
\newtheorem*{conj}{Conjecture}
\newtheorem{question}[theorem]{Question}
\newtheorem{defn}[theorem]{Definition}
\newtheorem{rem}[theorem] {Remark} 



\numberwithin{equation}{section}
\numberwithin{theorem}{section}
\numberwithin{table}{section}

\newcommand{\rad}{\operatorname{rad}}

\newcommand{\Z}{{\mathbb Z}} 
\newcommand{\Q}{{\mathbb Q}}
\newcommand{\R}{{\mathbb R}}
\newcommand{\C}{{\mathbb C}}
\newcommand{\N}{{\mathbb N}}
\newcommand{\F}{{\mathbb F}}
\newcommand{\FF}{{\mathbb F}}
\newcommand{\fq}{\mathbb{F}_q}
\newcommand{\rmk}[1]{\footnote{{\bf Comment:} #1}}

\renewcommand{\mod}{\;\operatorname{mod}}
\newcommand{\ord}{\operatorname{ord}}
\newcommand{\TT}{\mathbb{T}} 
 
\renewcommand{\i}{{\mathrm{i}}}
\renewcommand{\d}{{\mathrm{d}}}
\renewcommand{\^}{\widehat}
\newcommand{\HH}{\mathbb H}
\newcommand{\Vol}{\operatorname{vol}}
\newcommand{\area}{\operatorname{area}}
\newcommand{\dist} {\operatorname{dist}}

\newcommand{\tr}{\operatorname{tr}}
\newcommand{\norm}{\mathcal N} 
\newcommand{\disc}{\operatorname{disc}}

\newcommand{\intinf}{\int_{-\infty}^\infty}

\newcommand{\ave}[1]{\left\langle#1\right\rangle} 
\newcommand{\Var}{\operatorname{Var}}
\newcommand{\Prob}{\operatorname{Prob}}
\newcommand{\Cov}{\operatorname{cov}}

\newcommand{\sym}{\operatorname{Sym}}
\newcommand{\Sym}{\operatorname{Sym}}

\newcommand{\CA}{{\mathcal C}_A}
\newcommand{\cond}{\operatorname{cond}} 
\newcommand{\lcm}{\operatorname{lcm}}
\newcommand{\Kl}{\operatorname{Kl}} 
\newcommand{\leg}[2]{\left( \frac{#1}{#2} \right)}  
\newcommand{\Li}{\operatorname{Li}}

\newcommand{\sumstar}{\sideset \and^{*} \to \sum}

\newcommand{\sumf}{\sum^\flat}
 
\newcommand{\conv}{*}

\newcommand{\Ht}{\operatorname{Ht}}

\newcommand{\E}{\operatorname{\mathbb E}} 
\newcommand{\sign}{\operatorname{sign}} 
\newcommand{\meas}{\operatorname{meas}} 
\newcommand{\length}{\operatorname{length}} 

\newcommand{\GL}{\operatorname{GL}}
\newcommand{\SL}{\operatorname{SL}}
\newcommand{\Sp}{\operatorname{Sp}} 
\newcommand{\USp}{\operatorname{USp}} 

\newcommand{\re}{\operatorname{Re}}
\newcommand{\im}{\operatorname{Im}}
\newcommand{\res}{\operatorname{Res}}
  
\newcommand{\diam}{\operatorname{diam}}
 
\newcommand{\fixme}[1]{\footnote{Fixme: #1}}
 
\newcommand{\orb}{\operatorname{Orb}}
\newcommand{\supp}{\operatorname{Supp}}

\newcommand{\OP}{\operatorname{Op}} 
\newcommand{\opS}{\operatorname{S}} 
\newcommand{\opT}{\operatorname{T}} 
\newcommand{\HN}{{\cH}_{N}}
\newcommand{\OPN}{\operatorname{Op}_N} 
\newcommand{\TN}{\opT_N} 
\newcommand{\TNr}{\opT_N^{(r)}} 
\newcommand{\UNr}{U_{N,r}} 
\newcommand{\TM}{\opT_M} 
\newcommand{\Hp}{\cH_p} 
\newcommand{\OPp}{\operatorname{Op}_p} 
\newcommand{\Tp}{\operatorname{T}_p} 
 \newcommand{\diag}{\operatorname{diag}}

 \newcommand{\Fp}{\mathbb F_p}
 \newcommand{\Gal}{\operatorname{Gal}} 

 
\newcommand{\commZ}[2][]{\todo[#1,color=blue!60]{Z: #2}}
\newcommand{\commI}[2][]{\todo[#1,color=green!60]{I: #2}}
\newcommand{\commII}[2][]{\todo[#1,color=magenta!60]{I: #2}}
\newcommand{\commP}[2][]{\todo[#1,color=red!60]{P: #2}}
\newcommand{\commA}[2][]{\todo[#1,color=yellow!60]{A: #2}}

\def\ccr#1{\textcolor{red}{#1}}
\def\cco#1{\textcolor{orange}{#1}}
\def\ccc#1{\textcolor{cyan}{#1}}

\def\Degf{e} 

\def\cont{\mathrm{cont}}

\newcommand{\bs}{\boldsymbol}
\def \a{\alpha} \def \b{\beta} \def \d{\delta} \def \e{\varepsilon} \def \g{\gamma} \def \k{\kappa} \def \l{\lambda} \def \s{\sigma} \def \t{\vartheta} \def \z{\zeta}

\newcommand{\mb}{\mathbb}

\def\sssum{\mathop{\sum\!\sum\!\sum}}
\def\ssum{\mathop{\sum\ldots \sum}}
\def\iint{\mathop{\int\ldots \int}}

\newcommand{\cc}[1]{{\color{magenta} #1}}

\newfont{\teneufm}{eufm10}
\newfont{\seveneufm}{eufm7}
\newfont{\fiveeufm}{eufm5}
%
%
\newfam\eufmfam
     \textfont\eufmfam=\teneufm
\scriptfont\eufmfam=\seveneufm
     \scriptscriptfont\eufmfam=\fiveeufm
%
%
\def\frak#1{{\fam\eufmfam\relax#1}}

\newcommand{\bflambda}{{\boldsymbol{\lambda}}}
\newcommand{\bfmu}{{\boldsymbol{\mu}}}
\newcommand{\bfxi}{{\boldsymbol{\eta}}}
\newcommand{\bfrho}{{\boldsymbol{\rho}}}

\def\eps{\varepsilon}

\def\fI{\mathfrak I}
\def\fK{\mathfrak K}
\def\fT{\mathfrak{T}}
\def\fL{\mathfrak L}
\def\fR{\mathfrak R}
\def\fQ{\mathfrak Q}

\def\fA{{\mathfrak A}}
\def\fB{{\mathfrak B}}
\def\fC{{\mathfrak C}}
\def\fM{{\mathfrak M}}
\def\fS{{\mathfrak  S}}
\def\fU{{\mathfrak U}}

\def\cPg{\cP_{\mathrm{good}}}

\def\cNg{\cN_{\mathrm{good}}}
\def\tcNg{\widetilde\cN_{\mathrm{good}}}
 
\def\sssum{\mathop{\sum\!\sum\!\sum}}
\def\ssum{\mathop{\sum\ldots \sum}}
\def\dsum{\mathop{\quad \sum \qquad \sum}}
\def\iint{\mathop{\int\ldots \int}}
 
\def\T {\mathsf {T}}
\def\Tor{\mathsf{T}_d}
\def\Tore{\widetilde{\mathrm{T}}_{d} }

\def\sM {\mathsf {M}}
\def\sL {\mathsf {L}}
\def\sK {\mathsf {K}}
\def\sP {\mathsf {P}}

\def\ss{\mathsf {s}}

\def \vk{\vec{k}}
\def \vl{\boldsymbol{\ell}}
\def \vm{\vec{m}}
\def \vn{\vec{n}}
\def \vx{\vec{x}}
\def \vf{\vec{f}}
\def \vu{\vec{u}}
\def \vv{\vec{v}}
\def \vw{\vec{w}}

\def \balpha{\bm{\alpha}}
\def \bbeta{\bm{\beta}}
\def \bgamma{\bm{\gamma}}
\def \bdelta{\bm{\delta}}
\def \bzeta{\bm{\zeta}}
\def \blambda{\bm{\lambda}}
\def \bchi{\bm{\chi}}
\def \bphi{\bm{\varphi}}
\def \bpsi{\bm{\psi}}
\def \bxi{\bm{\xi}}
\def \bnu{\bm{\nu}}
\def \bomega{\bm{\omega}}

\def \bell{\bm{\ell}}

\def\eqref#1{(\ref{#1})}

\def\vec#1{\mathbf{#1}}

\newcommand{\abs}[1]{\left| #1 \right|}

\def\Zq{\mathbb{Z}_q}
\def\Zqx{\mathbb{Z}_q^*}
\def\Zd{\mathbb{Z}_d}
\def\Zdx{\mathbb{Z}_d^*}
\def\Zf{\mathbb{Z}_f}
\def\Zfx{\mathbb{Z}_f^*}
\def\Zp{\mathbb{Z}_p}
\def\Zpx{\mathbb{Z}_p^*}
\def\cM{\mathcal M}
\def\cE{\mathcal E}
\def\cH{\mathcal H}

\def\le{\leqslant}
\def\leq{\leqslant}
\def\ge{\geqslant}
\def\leq{\leqslant}

\def\sfB{\mathsf {B}}
\def\sfC{\mathsf {C}}
\def\sfS{\mathsf {S}}
\def\sfI{\mathsf {I}}
\def\sfT{\mathsf {T}}
\def\L{\mathsf {L}}
\def\FF{\mathsf {F}}

\def\sE {\mathscr{E}}
\def\sS {\mathscr{S}}
\def\Gal {\mathrm{Gal}}

\def\cA{{\mathcal A}}
\def\cB{{\mathcal B}}
\def\cC{{\mathcal C}}
\def\cD{{\mathcal D}}
\def\cE{{\mathcal E}}
\def\cF{{\mathcal F}}
\def\cG{{\mathcal G}}
\def\cH{{\mathcal H}}
\def\cI{{\mathcal I}}
\def\cJ{{\mathcal J}}
\def\cK{{\mathcal K}}
\def\cL{{\mathcal L}}
\def\cM{{\mathcal M}}
\def\cN{{\mathcal N}}
\def\cO{{\mathcal O}}
\def\cP{{\mathcal P}}
\def\cQ{{\mathcal Q}}
\def\cR{{\mathcal R}}
\def\cS{{\mathcal S}}
\def\cT{{\mathcal T}}
\def\cU{{\mathcal U}}
\def\cV{{\mathcal V}}
\def\cW{{\mathcal W}}
\def\cX{{\mathcal X}}
\def\cY{{\mathcal Y}}
\def\cZ{{\mathcal Z}}
\newcommand{\rmod}[1]{\: \mbox{mod} \: #1}

\def\cg{{\mathcal g}}

\def\vy{\mathbf y}
\def\vr{\mathbf r}
\def\vx{\mathbf x}
\def\va{\mathbf a}
\def\vb{\mathbf b}
\def\vc{\mathbf c}
\def\ve{\mathbf e}
\def\vf{\mathbf f}
\def\vg{\mathbf g}
\def\vh{\mathbf h}
\def\vk{\mathbf k}
\def\vm{\mathbf m}
\def\vz{\mathbf z}
\def\vu{\mathbf u}
\def\vv{\mathbf v}

\def\e{{\mathbf{\,e}}}
\def\ep{{\mathbf{\,e}}_p}
\def\eq{{\mathbf{\,e}}_q}
\def\ek{{\mathbf{\,e}}_k}

\def\Tr{{\mathrm{Tr}}}
\def\Nm{{\mathrm{Nm}}}

\def\newr{t}

 \def\SS{{\mathbf{S}}}

\def\lcm{{\mathrm{lcm}}}

 \def\0{{\mathbf{0}}}

\def\({\left(}
\def\){\right)}
\def\l|{\left|}
\def\r|{\right|}
\def\fl#1{\left\lfloor#1\right\rfloor}
\def\rf#1{\left\lceil#1\right\rceil}
\def\sumstar#1{\mathop{\sum\vphantom|^{\!\!*}\,}_{#1}}

\def\mand{\qquad \mbox{and} \qquad}

\def\tblue#1{\begin{color}{blue}{{#1}}\end{color}}

\def\Fpd{\FF_{p^{2d}}}
 \newcommand{\fpone}{{\mathbb N}_{p^2}}
\newcommand{\fpp}{{\mathbb F}_{p^2}}

 \def \xbar{\overline x}

\newif\ifcomment

%

\let\varepsilon\varepsilon

\title[Quantum ergodicity for higher dimensional cat maps]{On quantum ergodicity for higher dimensional cat maps}

\author[P. Kurlberg]{P\"ar Kurlberg}
\address{Department of Mathematics, 
Royal Institute of Technology\\ SE-100 44 Stockholm, Sweden}
\email{kurlberg@math.kth.se}

\author[A. Ostafe] {Alina Ostafe}
\address{School of Mathematics and Statistics, University of New South Wales, Sydney NSW 2052, Australia}
\email{alina.ostafe@unsw.edu.au}

\author[Z. Rudnick] {Zeev Rudnick}
\address{School of Mathematical Sciences, Tel-Aviv University, Tel-Aviv 69978, Israel}
\email{rudnick@tauex.tau.ac.il}

\author[I. E. Shparlinski] {Igor E. Shparlinski}
\address{School of Mathematics and Statistics, University of New South Wales, Sydney NSW 2052, Australia}
\email{igor.shparlinski@unsw.edu.au}

 \dedicatory{Dedicated to the memory of Steven Zelditch} 
\begin{abstract}
We study eigenfunction localization for higher dimensional cat maps, a popular model of quantum chaos. These maps are given by  linear symplectic maps in $\Sp(2g,\Z)$, which we take to be ergodic. Under some natural assumptions, we show that there is a density one sequence of integers $N$ so that as $N$ tends to infinity along this sequence, all eigenfunctions of the quantized map at the inverse Planck constant $N$ are uniformly distributed. For the two-dimensional case ($g=1$), this was proved by P.~Kurlberg and Z.~Rudnick (2001). The higher dimensional case offers several new features and requires a completely different set of tools, including from additive combinatorics, such as  a bound of J.~Bourgain  (2005) for Mordell sums, and a study of tensor product structures for the cat map, which has never been exploited in this context.
\end{abstract}

\keywords{Quantum unique ergodicity, linear map, exponential sums}
\subjclass[2020]{11L07, 81Q50}

\date{7 November, 2024}

\maketitle

\tableofcontents 

\section{Introduction}
\subsection{Quantum ergodicity and the quantized cat map}

Eigenfunction localization is one of the central topics of Quantum
Chaos. In this paper, we examine this question for an important ``toy
model'', the quantized cat map~\cite{HB}, aiming for higher
dimensional maps. Our techniques, after a preliminary reduction,
combine analytic number theory and additive combinatorics.

Denote by $\Sp(2g,\Z)$  the group of all   integer matrices $A$  which preserve the symplectic form  
\begin{equation}
\label{eq:omega}
\omega(\vec x,\vec y)  = \vec x_1 \cdot \vec y_2-\vec x_2\cdot \vec y_1,
\end{equation}
with $\vec x=(\vec x_1,\vec x_2)$, 
$\vec y=(\vec y_1,\vec y_2)\in \R^g\times \R^g$.  
Any $A\in \Sp(2g,\Z)$   generates a classical dynamical system via its action on the torus $\TT^{2g}= \R^{2g}/\Z^{2g}$. 
We say that this dynamical system is ergodic if for almost all initial positions $\vec x\in \TT^{2g}$, the orbit $\{A^j\vec x:~j\geq 0\}$ is uniformly distributed in $\TT^{2g}$. 
This is equivalent to $A$ having no eigenvalues which are roots of unity, see~\cite{Halm}.  

Associated to any $A\in \Sp(2g,\Z)$ is a quantum mechanical system. 
We briefly recall the key definitions: 
One   constructs for each integer $N\geq 1$ (the inverse Planck constant, necessarily an integer here) a Hilbert space of states $\HN = L^2((\Z/N\Z)^g)$ 
equipped with the  scalar product
\[
\langle \varphi_1,  \varphi_2 \rangle = \frac{1}{N^g} \sum_{\vec u \in (\Z/N\Z)^g}  \varphi_1(\vec u) \overline{ \varphi_2(\vec u)}, 
\qquad \varphi_1,  \varphi_2 \in \HN. 
\]

The basic observables are given by the unitary operators
\[
\TN(\vn):\HN\to \HN, \qquad \vn=(\vn_1,
\vn_2)\in \Z^g\times \Z^g=\Z^{2g},
\]
 as follows 
\begin{equation}
\label{eq:TN}
\left(\TN(\vn)\varphi \right)(\vec Q) = \e_{2N}(\vn_1\cdot \vn_2)\e_N(\vn_2\cdot \vec Q) \varphi (\vec Q+\vn_1),
\end{equation}
where hereafter we always follow the convention that integer arguments of functions on $\Z/N\Z$ are reduced modulo $N$ (that is, $ \varphi (\vec Q+\vn_1) =  \varphi (\vec Q+(\vn_1 \bmod N))$).
It is also easy to verify that~\eqref{eq:TN} implies  
\[
\TN(\vm) \TN(\vn)  = \e_{2N}\left(\omega\left(\vm,\vn\right)\right)\TN(\vm+\vn),
\]
where $\omega\left(\vm,\vn\right)$ is defined by~\eqref{eq:omega} and
\[
\e(z) =  \exp\(2\pi i z\),\quad \ek(z)=\e(z/k), 
\]
see also~\cite[Equation~(2.6)]{KR2001}.

For each real-valued function $f\in C^\infty(\TT^{2g})$ (an ``observable''), one associates a self-adjoint operator $\OPN(f)$ on $\HN$, analogous to a pseudo-differential operator with symbol $f$, defined by 
\begin{equation}
\label{eq:OpN}
\OP_{N}(f) = \sum_{\vn\in \Z^{2g}}\^f(\vn) \TN(\vn),
\end{equation}
where
\begin{equation}
\label{eq:f hat n}
f(\vec x)= \sum_{\vn\in \Z^{2g}} \hat f(\vec n)\e(\vec n \cdot \vec x).
\end{equation}

Assuming  $A=I\bmod 2$ (this condition can be weakened, see, for example, the definition 
of the subgroup $\Sp_\vartheta(2g, \Z)$ of  $\Sp(2g, \Z)$ as  in~\cite[p.~817]{KelmerAnnals}, 
with $d$ instead of $g$),
 for each value of the inverse Planck constant $N\geq 1$,  there is a
 unitary operator $U_N(A)$ on $\HN$, unique up to scalar multiples,
 which  generates the quantum evolution,   
 in the sense that for every observable  $f\in C^\infty(\TT^{2g})$, we have the exact Egorov property
\begin{equation}\label{egorov for T}
U_N(A)^*\OPN(f) U_N(A) = \OPN(f\circ A) , 
\end{equation}
where $U_N(A)^*= \overline{U_N(A)}^{\,t}$, 
we refer to~\cite{KRDuke, Rudnicksurvey} for  a detailed exposition in the case $g=1$ and~\cite{KelmerAnnals} for higher dimensions.  

 The stationary states of the system are the eigenfunctions of
 $U_N(A)$ and one of the main goals is to study their localization
 properties. In particular, given any normalized 
 sequence of eigenfunctions 
 $\psi_N\in \HN$, we ask if the expected values of observables  in these eigenfunctions converge, as $N\to \infty$, to the classical average 
 (see \S~\ref{sec:Observ} for precise definitions), that is, that
\begin{equation}
\label{eq:unif distr}
 \lim_{N\to \infty} \langle \OPN(f)\psi_N,\psi_N \rangle = \int_{\TT^{2g}} f(\vec x)d\vec x 
\end{equation}
for all $f\in C^\infty(\TT^{2g})$, 
   in which case we say that the sequence of eigenfunctions $\{\psi_N\}$ is uniformly distributed. 
   
   A fundamental result is the Quantum Ergodicity Theorem~\cite{Schnirelman, Zelditch, CdV}, valid in great generality, which
   in our setting asserts that if $A$ is ergodic, then for any
   {\bf orthonormal basis} $ \varPsi_N = \{\psi_{j,N}:~j=1,\ldots,N^g\}$ of
   eigenfunctions of $U_N(A)$ in $\HN$, there is a subset
   $\cS\subseteq\{1,\ldots,N^g\}$ with asymptotic density one (that is,
   $\sharp\cS/N^g\to 1$, where $\sharp \cS$ denotes the cardinality of $\cS$)
   so that $\psi_{j,N}$ are uniformly distributed for all $j\in \cS$,
   see~\cite{BdBv}. If \emph{all} eigenfunctions are uniformly
   distributed, the system is said to exhibit Quantum Unique
   Ergodicity~\cite{RudnickSarnakQUE}. In fact, more
   generally, 
  setting  
   \[
\Delta_A(f,N) = 
  \max_{\psi_N,\psi'_N }
\left| \langle \OPN(f)\psi_{ N}, \psi_{N}'  \rangle -\langle \psi_N,\psi_N'\rangle \int_{\TT^{2g}}
  f(\vx)d\vx  \right| ,
\] 
the maximum taken over all pairs of normalized eigenfunctions $\psi_N,\psi'_N$ of $U_N(A)$,
we ask if for all $f\in C^\infty(\TT^{2g})$, 
   \begin{equation}\label{QUE for all N's}
\lim_{\substack{N\to \infty \\ N \in \cN }}  \Delta_A(f,N)   = 0 , 
\end{equation} 
where $\cN$ is a set of integers of asymptotic density 1 (that is, 
$\# (\cN \cap [1,x]) = x + o(x)$ as $x \to \infty$).  
\begin{rem}
\label{rem:non-diag} 
It is
  interesting to note that even if we are mainly interested in scarring
  (that is, decay of diagonal matrix coefficients corresponding to $\psi_{N}' = \psi_{N}$ in the above
  definition of $\Delta_A(f,N)$ and establishing~\eqref{eq:unif distr}), for the full argument we still 
  need estimates for off-diagonals coefficients of the ``nontrivial''
  tensor component in \S~\ref{sec:eigenf-tens-prod}. 
\end{rem} 

 The two-dimensional ($g=1$) cat map is where the first counterexamples (``scars'') to QUE have been proved to exist~\cite{FND},  
  associated with  the $N$,  where the period $\ord(A,N)$ of the classical map reduced modulo $N$ was almost minimally small, 
  about  $2 \log N/\log \lambda$, where $\lambda>1$ is the largest eigenvalue of $A$.   We note that the relevance of the classical period to the quantum system was recognized early on in the theory~\cite{HB, Keating, dEGI}.
In~\cite{KR2001}, it was shown that if $\ord(A,N)$ was somewhat larger than $N^{1/2}$ (and $N$ satisfies a further genericity condition), then {\em all} eigenfunctions  in $\HN$ are uniformly distributed. 
 Note that the condition holds for almost all primes~\cite{EM}. 
Separately, it was shown that $\ord(A,N) $ is sufficiently large for almost all integers $N$. 

A breakthrough was made by Bourgain~\cite{BourgainGAFA}, who showed
that when $N=p$ is prime (that, and the prime power, cases are the
basic building block for the theory since the quantization with
respect to composite moduli arise as tensor products of quantizations
with respect to prime power moduli), for all eigenfunctions to be
uniformly distributed it suffices to take $\ord(A,p)>p^\varepsilon$,
for some $\varepsilon>0$, a condition that is much easier to establish
than a bound bigger than $p^{1/2}$.  This allowed Bourgain~\cite{BourgainGAFA}  to give a
polynomial rate of convergence for a version of~\eqref{QUE for all N's} 
over a sequence of almost all integers: for some
$\delta>0$, for almost all $N$ we have
$ \Delta_A(f,N) \le N^{-\delta} $.  Using a different approach, in
\cite{OSV} it is shown that one can take any $\delta< 1/60$.  


\subsection{Higher dimensional cat maps}
Higher dimensional cat maps offer several more challenges. 
In particular, we address the analogue of~\cite{KR2001}, namely all eigenfunctions in $\HN$  being uniformly distributed for almost all integers $N$.
We do not discuss other aspects of localizations, such as entropy
bounds~\cite{FN, Riviere} and showing that all semiclassical measures have
full support~\cite{Schwartz, DyatlovJezequel, Kim_etal}.   

In higher dimensions (that is, for $g>1$), there is a significant change. Kelmer~\cite{KelmerAnnals} has shown that if $A$ has nontrivial invariant rational istropic subspaces, then for all $N$, uniform distribution~\eqref{QUE for all N's} fails -- there are so-called scars. 

 So we assume that there are no nontrivial invariant rational isotropic subspaces. 
 We want to find a full density  sequence $\mathcal N$  of  integers $N$ for
which  {\em all} eigenfunctions of $U_N(A)$ are uniformly distributed,
that is,  if we fix  
$f\in C^\infty(\TT^{2g})$ then we have~\eqref{QUE for all N's}. 
If this holds for all $f$ then  we say that {\it $A$  satisfies  QUE for the
subsequence $\mathcal N$\/}.

Recall that we assume ergodicity,
equivalently,  that the eigenvalues of 
$A\in \Sp(2g,\Z)$ are not roots of unity.  For our results, we need to
impose a further condition on $A$, that no ratio of distinct eigenvalues is a root of unity. 
In addition, we assume that the characteristic polynomial $f_A(x)=\det(xI-A)\in \Z[x]$ is  separable (that is, has no multiple roots)

Our main result establishes~\eqref{QUE for all N's} for almost all integers under the above conditions on $A$:
\begin{theorem}\label{thm:integers}
  Let $A\in \Sp(2g,\Z)$, with a separable
  characteristic polynomial, be such that  no ratio of distinct eigenvalues is a root of unity. 
 Assume further that there are no nontrivial $A$-invariant rational isotropic subspaces. Then $A$ satisfies QUE as in~\eqref{QUE for all N's} 
  for some set $\cN$ of asymptotic density $1$. 
  \end{theorem}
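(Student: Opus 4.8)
The plan is to follow the successful strategy of the $g=1$ case from \cite{KR2001, BourgainGAFA}, but with the extra structural input forced by higher dimensions. The first step is the standard reduction to prime powers: since for composite $N = \prod p^{k}$ the Hilbert space $\HN$ and the propagator $U_N(A)$ factor as tensor products over the prime-power divisors, it suffices to control $\Delta_A(f,p^k)$ for each prime power and then assemble. We then further reduce: by the exact Egorov relation \eqref{egorov for T} applied to the pure-frequency observables $\TN(\vn)$, and the usual Hecke-type averaging over the commutant of $U_N(A)$ (the group generated by the $\TN(\vec m)$ commuting with $U_N(A)$, i.e. the ``quantized $A$-invariant lattice''), a diagonal (or off-diagonal) matrix coefficient $\langle \TN(\vn)\psi_N, \psi_N'\rangle$ is bounded by an average of exponential sums of the shape $\sum_{j} \e_{qN}(\,\langle \vn, A^j \vec m\rangle\text{-type phase}\,)$, where $j$ runs over residues related to the order $\ord(A,N)$. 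The upshot — exactly as in \cite{KR2001} — is that QUE over a density-one set of $N$ follows once one has (a) a nontrivial cancellation bound for these ``cat-map exponential sums'', provided $\ord(A,p) > p^{\varepsilon}$, and (b) the fact that $\ord(A,N)$ is large (bigger than $N^{\varepsilon}$, or just $p^{\varepsilon}$ for the prime factors) for almost all $N$.

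The novel ingredient, and the reason a ``completely different set of tools'' is needed, is the handling of the exponential sums when $f_A$ is reducible or, more generally, when the reduction $A \bmod p$ has several Frobenius-eigenvalue orbits — here the map decomposes (after base change to an extension $\F_{p^d}$) into a tensor product of lower-dimensional pieces, and the relevant sums become Mordell-type sums $\sum_{x} \e_p(a_1 x^{r_1} + \dots + a_s x^{r_s})$ over $\F_p^\times$ with the exponents $r_i$ governed by the multiplicative orders of the distinct eigenvalues. The hypothesis that no ratio of distinct eigenvalues is a root of unity is precisely what guarantees these exponents are ``generic'' (no two coincide, none trivial) so that Bourgain's 2005 bound for Mordell sums applies and gives savings $p^{-\delta}$ once the order exceeds $p^{\varepsilon}$; separability of $f_A$ is what makes the eigenvalue decomposition and the tensor factorization clean. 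Crucially — and this is the point flagged in Remark~\ref{rem:non-diag} — even to bound diagonal coefficients in a nontrivial tensor factor one is forced to estimate off-diagonal coefficients in the complementary factor, so one must run the exponential-sum estimates in the full generality of pairs of eigenfunctions, not just scars.

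The remaining step is to rule out the Kelmer obstruction: the absence of nontrivial $A$-invariant rational isotropic subspaces is exactly the condition that prevents the tensor decomposition from producing a factor on which the quantized map is, essentially, a Weil representation with a large reducible/degenerate piece carrying a scar; under this hypothesis each tensor factor is either a full ``generic'' cat map (handled by the Mordell-sum estimate) or has the irreducibility needed to push the argument through, so no scarred factor survives. Finally one intersects the (density-one) set of $N$ on which $\ord(A, p) > p^{\varepsilon}$ holds for all prime factors — established by sieve/arithmetic arguments of the type in \cite{EM} adapted to the splitting field of $f_A$, where one needs the large-order statement for the order of $A$ modulo primes in that field — with the trivial density-one set obtained by deleting $N$ with an unusually large power of a small prime, to get the desired $\cN$. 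I expect the main obstacle to be step (a) in the reducible/mixed case: matching the exact arithmetic of the cat-map exponential sums — with their quadratic-form phases, the $\e_{2N}$ half-integer subtleties, and the averaging over the commutant lattice of $A \bmod p^k$ — to a form where Bourgain's Mordell-sum bound genuinely applies, uniformly over the tensor factors and over the pair of eigenfunctions, and controlling the contribution of ramified primes and prime powers $p^k$ with $k \geq 2$ where the reduction of $A$ can degenerate.
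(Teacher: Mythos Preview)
Your proposal has the right flavor but contains a structural gap that would cause the argument to fail as stated.

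You propose to factor $N=\prod p^k$, bound $\Delta_A(f,p^k)$ for \emph{each} prime-power factor via Bourgain's Mordell-sum bound (once $\ord(A,p)>p^\varepsilon$), and then assemble via the tensor product. The problem is that Bourgain's bound is only available when the characteristic polynomial of $A$ \emph{splits completely} modulo $p$; for non-split primes (a positive proportion of all primes, and hence typically dividing $N$) there is no known nontrivial bound on the relevant exponential sums. So ``$\ord(A,p)>p^\varepsilon$ for all prime factors'' is not enough, and restricting to $N$ all of whose prime factors are split would not give a density-one set.

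The paper's route around this is different from what you sketch. One does \emph{not} try to win at every prime factor. Instead one shows (via sieve and ``anatomy of integers'' arguments) that for almost all $N$ there is a \emph{single} good split prime $p\mid N$ with large order and, crucially, with $\gcd\bigl(p-1,\ord(A,N/p)\bigr)$ small. Writing $N=pM$, one uses the CRT tensor factorisation $\TN(\vn)=\opT_p^{(r_2)}(\vn)\otimes\opT_M^{(r_1)}(\vn)$ (note the ``mixed'' parameter $r$ coming from the $\e_{2N}$ subtlety you flag), passes to $A^d$ with $d=\gcd(\ord(A,p),\ord(A,M))$ so that the orders become coprime and the eigenspaces of $U_N(A^d)$ are genuine tensor products of eigenspaces, bounds the $M$-factor \emph{trivially} by $1$, and harvests a saving $p^{-\gamma}$ from the $p$-factor alone via the Mordell-sum estimate. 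The gcd condition is what guarantees that passing to $A^d$ does not destroy the order lower bound needed for Bourgain's input. None of this gcd/anatomy machinery appears in your outline, and without it the tensor-product reduction to a single prime does not go through. Your treatment of the zero-divisor/reducible case is also too vague: the relevant tensor structure is not a base-change to $\F_{p^d}$ but a symplectic splitting $W=W_1\oplus W_2$ of $\F_p^{2g}$ into $A$-invariant symplectic pieces (this is where the no-isotropic-subspace hypothesis enters), with $\vn$ projecting to a non-zero-divisor in $W_1$ and to zero in $W_2$.
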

  One can show  
  that if the
  characteristic polynomial of $A$ is irreducible, then there are no
  nontrivial $A$-invariant rational subspaces.  
  
   
\subsection{Plan of the proof}  
\label{sec:plan} 
We establish Theorem~\ref{thm:integers} via the following sequences of steps.

\begin{itemize}
\item[(i)]  To prove~\eqref{QUE for all N's}, it suffices to show it for the basic observables 
(translation operators) $\TN(\vn) = \OPN(f)$, $f(\vx)= \e\( \vx\cdot \vn \)$ 
 (see also~\eqref{eq:OpN}), with frequency $\vn$ growing slowly with $N$. 

  Assume that  the characteristic polynomial $f_A(x)=\det(xI-A)$ is
  irreducible over the rationals. Then we reduce the problem of
  estimating high powers $ \left| \langle \TN(\vn)\psi ,\psi' \rangle
  \right|^{4\nu}$ of the matrix elements  
   for all   normalized eigenfunctions $\psi, \psi'$, to a problem of estimating the
  number 
  of solutions to the matrix congruence   
\[
 A^{k_1}+\ldots +A^{k_{2\nu}} -A^{\ell_1} -\ldots -A^{\ell_{2\nu}}    \equiv  O\bmod N,  
\]
for the zero matrix $O$   
with $1\leq k_i,\ell_i \leq  \ord(A,N) $, $i =1, \ldots, 2\nu$, see
Lemmas~\ref{lem:basic inequality new} and~\ref{lin indep lemma} (since
indeed $f_A(x)$ being irreducible implies there is no nontrivial
zero-divisor in $\Q^{2g}$).

\item[(ii)]
In turn, this   number  can be treated by exponential sums. 
    However this reduction does not work directly due to the lack of
  nontrivial bounds on such sums except when $N=p$ is a prime,  
  modulo which the characteristic
  polynomial of $A$   splits completely, in which case we can apply a
  striking result of Bourgain~\cite{Bourgain Mordell} on short ``Mordell sums". The result, roughly speaking, is
  that there is some $\gamma>0$ so that for almost all split primes $p$,
\begin{equation}\label{eq:key bound on p intro}
       \max_{\psi , \psi'  }
  \left| \langle \opT_p (\vn)\psi ,\psi'  \rangle \right|
  \leq  p^{-\gamma}. 
\end{equation}

\item[(iii)]   To take advantage of the bound~\eqref{eq:key bound on p intro} for split primes, 
  we prove that the operators  $\TN(\vn)$ have a tensor
  product structure with respect to the
 Chinese  Remainder Theorem,  however with some losses depending on certain
 greatest common divisors.  Thus we  deal with the operators  $\Tp(\vn)$
 via exponential sums 
 and use the trivial bound 
 \[
  \left| \langle \TM(\vn)\psi ,\psi'  \rangle \right|\le 1, 
  \]
  where $M$ is the largest divisor of $N$ without split prime factors. 

 \item[(iv)] Finally, using some results from the {\it anatomy of  integers\/} (\S~\ref{sec:anatomy}) we show that for almost all
 integers $N$, the saving we obtain from the split primes  $p\mid N$, 
 exceeds the losses we incur 
in our version of the Chinese  Remainder Theorem. 

\item[(v)] When the characteristic polynomial $f_A(x)$ of $A$ is reducible,
  but separable,  we  require extra consideration, as the reduction to
  counting solutions of matrix congruences fails when $\vn$ is a
  non-trivial zero-divisor. 
    We make use of an additional tensor structure to
  reduce to the setting of congruences for a smaller dimensional case,
  see \S~\ref{sec:zero divisors} for details. 
\end{itemize}

 \subsection{Notation} 
 \label{sec:not}  
Throughout the paper, the 
notations 
\[X = O(Y),  \qquad X \ll Y, \qquad Y \gg X
\] 
are all equivalent to the
statement that the inequality $|X| \le c Y$ holds with some 
constant $c> 0$, which may depend on the matrix $A$, and occasionally, where obvious also on
the real parameter $\varepsilon$.

 We recall that the additive character with period $1$ is denoted by
\[
z \in \mathbb R\  \mapsto \ \e(z) =  \exp\(2\pi i z\).
\]
For an integer $k \ge 1$ it is also convenient to define 
\[
\ek(z) = \e(z/k).
\]

The letter $p$, with or without indices, always denotes prime numbers. 

Given an algebraic number $\gamma$ we denote by $\ord(\gamma, N)$ its order modulo $N$ 
(assuming that the ideals generated by $\gamma$ and $N$ are relatively prime in an appropriate number field). In particular, for an element $\lambda \in\F_{p^s}$, $\ord(\lambda,p)$ represents the order of $\lambda$ in $\F_{p^s}$. 

Similarly, we use $\ord(A,N)$ to denote the order of $A$ modulo $N$ (which always exists 
if $\gcd(\det A, N)= 1$ and in particular for $A\in  \Sp(2g,\Z)$). 

For a finite set $\cS$ we use $\sharp\cS$ to denote its cardinality.

 As usual, we say that a certain property holds for {\it almost all\/} elements of a sequence
 $s_n$, $n =1,2, \ldots$, if it fails for $o(x)$ terms with $n \le x$, as $x \to \infty$.
In particular, we say that it holds for {\it almost all\/}
 primes $p$ and positive integers $N$ 
 if  for $x \to \infty$, it fails for $o(x/\log x)$ primes $p\le x$ and $o(x)$ positive integers integers $N\le x$, respectively. 

Similarly,  we say that a certain property holds for {\it a positive proportion\/} of primes $p$  or, equivalently 
for a set of   {\it positive density\/},   if for some constant $c>0$,
which throughout this work may depend on the matrix $A$,  for
all sufficiently large $x$ it holds  
for at least $c x/\log x$ primes $p\le x$. 

\subsection{Acknowledgement} 

During the preparation of this work, 
P.K. was partially supported by the Swedish
  Research Council (2020-04036) and the Knut and Alice Wallenberg
  foundation (KAW 2019.0503). A.O. was supported by the Australian Research Council, DP230100530 and she gratefully acknowledges the hospitality and support of the Max Planck Institute for Mathematics in Bonn, where parts of this
work has been carried out.  Z.R. was supported by the European
  Research Council (ERC) under the European Union's Horizon 2020
  research and innovation programme (grant agreement No.~786758), and
  by the ISF-NSFC joint research program (grant No.~3109/23). 
  I.S. was supported by the Australian Research Council, DP230100534.

 \section{A Chinese  Remainder Theorem for the operators $\TN(\vn)$} 
\subsection{Observables} 
\label{sec:Observ}
We begin by defining the mixed translation operators. Given $r\in \Z$,
coprime
to $N$, and $\vn=(\vn_1, \vn_2)\in \Z^g\times \Z^g=\Z^{2g}$, we define
a unitary operator $\TN^{(r)}(\vn):\HN\to \HN$ by
\[
\left(\TN^{(r)}(\vn)\varphi \right)(\vec Q) = \e_{2N}(r \vn_1\cdot \vn_2)\e_N(r \vn_2\cdot \vec Q) \varphi (\vec Q+\vn_1) .
\]

We have
\begin{equation}
\label{eq:TNmn add}
\TN^{(r)}(\vm) \TN^{(r)}(\vn)  = \e_{2N}\left(r\omega\left(\vm,\vn\right)\right)\TN^{(r)}(\vm+\vn),
\end{equation}
where $\omega\left(\vm,\vn\right)$ is defined by~\eqref{eq:omega}. In particular, taking powers gives
\[
\left(  \TN^{(r)}\left(\vn \right) \right)^k = \TN^{(r)}(k \vn) .
\] 
The canonical commutation relations can be encapsulated in the relations
\[
\TN^{(r)}(\vn) \TN^{(r)}(\vm)=\e_N\left(r\omega\left(\vn,\vm\right) \right) 
\TN^{(r)}(\vm) \TN^{(r)}(\vn)  
\]
and
\[
(  \TN^{(r)}(\vn))^N =   \TN^{(r)}(N\vn) = (-1)^{r N\vn_1 \cdot \vn_2} I , \quad \vn=(\vn_1,\vn_2).
\]

For each  function $f\in C^\infty(\TT^{2g})$ on the classical phase space (an ``observable''), one associates an operator $\OP_{N,r}(f)$ on $\HN$, analogous to a pseudo-differential operator with symbol $f$, by 
\[
\OP_{N,r}(f) = \sum_{\vn\in \Z^{2g}}\^f(\vn) \TN^{(r)}(\vn),
\]
where $\^f(\vn)$ are defined by~\eqref{eq:f hat n}. 
If $f$ is real valued, then $\OP_{N,r}(f)$ is self-adjoint. 

When $r=1$,
we recover the definitions of $\TN = \TN^{(1)}$ and $\OPN = \OP_{N,1}$
in~\eqref{eq:TN} and~\eqref{eq:OpN}, respectively. 

Let $A\in \Sp(2g,\Z)$, satisfying the parity condition $A = I\bmod 2$. 
Fix $N\geq 1$ and $r$ coprime to $N$. Then there is a unitary operator $U_{N,r}(A):~\HN\to \HN$, unique up to a scalar multiple, so that we have the exact Egorov property
\begin{equation}\label{egorov for Tr}
U_{N,r}(A)^* \TN^{(r)}(\vn) U_{N,r}(A) =\TN^{(r)}(\vn A), 
\end{equation}
fo all $\vn\in \Z^{2g}$, which is a full analogue of~\eqref{egorov for T}.

\subsection{The Chinese Remainder Theorem and a tensor product
  structure}

Assume that the inverse Planck constant $N$ factors as $N=N_1 \cdot N_2$ with $N_1,N_2$ coprime. We then  use the Chinese Remainder Theorem $\iota: \Z/N\Z\cong \Z/N_1\Z \oplus \Z/N_2\Z$ to get an isomorphism
\[
\iota^*: L^2((\Z/N_1\Z)^g)\otimes  L^2((\Z/N_2\Z)^g) = \mathcal  \cH_{N_1} \otimes \mathcal  \cH_{N_2}\cong 
 \HN = L^2((\Z/N\Z)^g) \]
so that
\begin{equation}
\label{eq:iota tensor}
\iota^*(\varphi _1\otimes \varphi _2)(\vec Q) = \varphi _1(\vec Q \bmod N_1) \cdot  \varphi _2(\vec Q \bmod N_2) .
\end{equation}

The tensor product $\mathcal H_{N_1} \otimes \mathcal H_{N_2}$ carries the inner product 
\[
\|\varphi_1\otimes \varphi_2\| =\|\varphi_1\| \cdot \|\varphi_2\|
\]
and $\iota^*$  is actually an isometry, because it  maps the orthonormal basis of tensor products of normalized delta functions to normalized delta functions:
\[
\iota^* \left( N_1^{g/2}\delta_{\vec u} \otimes N_2^{g/2} \delta_{\vec v} \right) = N^{g/2} \delta_{\vec w}
\]
where $\vec w= \vec u \bmod N_1$, $\vec w = \vec v \bmod N_2$.

Assume $N=N_1 \cdot N_2$ with $N_1>1$, $N_2>1$ coprime.
Fix nonzero $r_1,r_2 \in \Z$ so that
\begin{equation}\label{equation with r1, r2}
  N_2 r_2 +  N_1 r_1=1.
\end{equation}
Necessarily $r_2$ is coprime to $N_1$ and $r_1$ is coprime to $N_2$. 
\begin{lemma}\label{lem:factorization of Tn}
For $\vn=(\vn_1, \vn_2)\in \Z^g\times \Z^g$,  
the mixed translation operator
$\TN(\vn)=\opT_N^{(1)}(\vn)$ is mapped, via the isomorphism
$\iota^*$,  
to $\opT_{N_1}^{(r_2)}(\vn)\otimes \opT_{N_2}^{(r_1)}(\vn)$:  
\[
\TN(\vn) \iota^*\left( \varphi _1\otimes \varphi _2 \right)  = \iota^*\( \left( \opT_{N_1}^{(r_2)}(\vn)\varphi _1 \right) \otimes  \left( \opT_{N_2}^{(r_1)}(\vn)\varphi _2 \right) \) .
\] 
\end{lemma}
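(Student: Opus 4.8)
The statement is a direct computation: one checks that both sides of the claimed identity, evaluated at an arbitrary point $\vec Q\in(\Z/N\Z)^g$, give the same value. The plan is to expand the left-hand side using the definition~\eqref{eq:TN} of $\TN(\vn)=\opT_N^{(1)}(\vn)$ together with the formula~\eqref{eq:iota tensor} for $\iota^*$, and to expand the right-hand side using the definition of the mixed operators $\opT_{N_1}^{(r_2)}$ and $\opT_{N_2}^{(r_1)}$ from \S\ref{sec:Observ} followed again by~\eqref{eq:iota tensor}; then match the translation part and the phase part separately.

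First I would treat the translation part. On the left, $\TN(\vn)$ shifts the argument by $\vn_1$, so after applying $\iota^*$ we get $\varphi_1((\vec Q+\vn_1)\bmod N_1)\cdot\varphi_2((\vec Q+\vn_1)\bmod N_2)$. On the right, $\opT_{N_j}^{(r)}$ shifts by $\vn_1$ modulo $N_j$, and reduction mod $N_j$ commutes with adding $\vn_1$, so the translation parts agree termwise. This reduces the lemma to an identity between the two phase factors: we must show
\[
\e_{2N}(\vn_1\cdot\vn_2)\,\e_N(\vn_2\cdot\vec Q)
=\e_{2N_1}(r_2\,\vn_1\cdot\vn_2)\,\e_{N_1}(r_2\,\vn_2\cdot\vec Q)\cdot
\e_{2N_2}(r_1\,\vn_1\cdot\vn_2)\,\e_{N_2}(r_1\,\vn_2\cdot\vec Q),
\]
where on the right the first pair of factors is evaluated at $\vec Q\bmod N_1$ and the second at $\vec Q\bmod N_2$ (but since these occur inside $\e_{N_1}$, $\e_{N_2}$ that reduction is harmless).

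The key identity is that for any integer $m$,
\[
\frac{m}{N}\equiv \frac{r_2 m}{N_1}+\frac{r_1 m}{N_2}\pmod 1,
\]
which is immediate from $N=N_1N_2$ and the defining relation~\eqref{equation with r1, r2}, i.e.\ $N_2r_2+N_1r_1=1$, after multiplying through by $m/N$. Applying this with $m=\vn_2\cdot\vec Q$ handles the linear phases $\e_N(\vn_2\cdot\vec Q)$. For the quadratic phases $\e_{2N}(\vn_1\cdot\vn_2)$ one needs the analogous congruence modulo $1$ with denominator $2N$; here one must be slightly careful, since $2N=2N_1\cdot N_2$ (or $N_1\cdot 2N_2$) and $2N_1,N_2$ need not be coprime when $N_1$ is even, but the algebraic identity $\frac{a}{2N}\equiv \frac{r_2 a}{2N_1}+\frac{r_1 a}{2N_2}\pmod 1$ still follows simply by putting the right-hand side over the common denominator $2N$ and using $N_2r_2+N_1r_1=1$. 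Taking $a=\vn_1\cdot\vn_2$ finishes the match of phase factors, and hence the lemma. The only point requiring a moment's attention is this handling of the factor $2$ in the half-integer (metaplectic) phase; everything else is bookkeeping with the Chinese Remainder Theorem.
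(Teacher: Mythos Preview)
Your proposal is correct and follows essentially the same route as the paper: both arguments expand $\TN(\vn)\iota^*(\varphi_1\otimes\varphi_2)$ via the definitions and split the phase factors $\e_{2N}(\vn_1\cdot\vn_2)$ and $\e_N(\vn_2\cdot\vec Q)$ using the relation $N_2r_2+N_1r_1=1$. Your caution about the factor~$2$ is in fact unnecessary, since $\frac{a}{2N}=\frac{r_2a}{2N_1}+\frac{r_1a}{2N_2}$ holds as an exact equality of rationals, not merely modulo~$1$, just as the paper writes it.
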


\begin{proof}
Inserting~\eqref{equation with r1, r2} gives 
\begin{align*}
\e_{2N}(\vn_1\cdot \vn_2) & = \e\left( \frac{(N_2 r_2 +  N_1 r_1)\vn_1\cdot \vn_2}{2N_1N_2}\right)\\
& =  
\e\left( \frac{r_2 \vn_1\cdot \vn_2}{2N_1}\right) \e\left(  \frac{r_1 \vn_1\cdot \vn_2}{2N_2}\right) 
\end{align*} 
and for $\vy\in(\Z/N\Z)^g$, 
\begin{align*}
\e_N(\vn_2 \cdot \vy) & = \e\left(\frac{(N_2 r_2 +  N_1 r_1) \vn_2 \cdot \vy}{N_1N_2}  \right) \\
& = \e\left( \frac{r_2 \vn_2 \cdot \vy}{N_1} \right)\cdot \e\left( \frac{r_1 \vn_2 \cdot \vy}{N_2} \right) .
\end{align*} 

By definition~\eqref{eq:TN},
\[
\begin{split}
\{\TN(\vn)\iota^*(\varphi _1\otimes \varphi _2)\} (\vy)  = \e_{2N}(\vn_1\cdot \vn_2) \e_N(\vn_2 \cdot \vy) \iota^*(\varphi _1\otimes \varphi _2) (\vy+\vn_1)
\end{split}
\]
so that, using~\eqref{eq:iota tensor}, we obtain
\[
\begin{split}
\{\TN(\vn)\iota^*(\varphi _1\otimes \varphi _2)\} (\vy)   & = \e_{2N}(\vn_1\cdot \vn_2) \e_N(\vn_2 \cdot \vy) \iota^*(\varphi _1\otimes \varphi _2) (\vy+\vn_1) 
\\
&=  \e\left( \frac{r_2 \vn_1\cdot \vn_2}{2N_1}\right)e\left( \frac{r_2 \vn_2 \cdot \vy}{N_1} \right) \varphi _1(\vy+\vn_1) 
\\&\qquad  \cdot 
\e\left(  \frac{r_1 \vn_1\cdot \vn_2}{2N_2}\right) \e\left( \frac{r_1 \vn_2 \cdot \vy}{N_2} \right)\varphi _2(\vy+\vn_1) 
\\ &=  \left( \opT_{N_1}^{(r_2)}(\vn)\varphi _1 \right)(\vy) \cdot   \left( \opT_{N_2}^{(r_1)}(\vn)\varphi _2 \right)(\vy) 
\\
&=\iota^*\( \left(  \opT_{N_1}^{(r_2)}(\vn) \otimes \opT_{N_2}^{(r_1)}(\vn)  \right) \left( \varphi _1\otimes \varphi _2 \right)\) (\vy)
\end{split}
\] 
as claimed. 
\end{proof}

\subsection{Factorization of the quantized map}
We continue to assume a factorization of $N=N_1 \cdot N_2$ with $N_1>1$, $N_2>1$ coprime, and such that $r_1,r_2$ satisfy~\eqref{equation with r1, r2}. Then the Chinese Remainder Theorem induces an isometry  
\[
\HN \simeq \mathcal H_{N_1} \otimes \mathcal H_{N_2}, 
\]
which is respected by the translation operators (see Lemma~\ref{lem:factorization of Tn}). 
Furthermore, from now on, we identify the spaces 
$\HN$ and  $\mathcal H_{N_1} \otimes \mathcal H_{N_2}$ and thus we do not use the isomorphism  map $\iota^*$ anymore. 
We argue that we get a corresponding factorization 
of the quantized map $U_N(A) = U_{N,1}(A)$ defined by~\eqref{egorov for T} as a tensor product:

\begin{lem}
\label{eq:OperFact}
There is some $\zeta \in \C$, with $|\zeta|=1$, such that we have a
factorization
\[
U_{N}(A) = \zeta U_{N_1,r_2}(A) \otimes U_{N_2,r_1}(A).
\]
\end{lem}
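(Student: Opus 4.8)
The plan is to deduce the factorization of $U_N(A)$ from the uniqueness (up to scalar) of operators satisfying the exact Egorov property~\eqref{egorov for Tr}, combined with Lemma~\ref{lem:factorization of Tn}. Concretely, set $V := U_{N_1,r_2}(A) \otimes U_{N_2,r_1}(A)$, acting on $\mathcal H_{N_1} \otimes \mathcal H_{N_2} = \HN$. I claim $V$ satisfies the same intertwining relation as $U_N(A)$, namely $V^* \TN(\vn) V = \TN(\vn A)$ for all $\vn \in \Z^{2g}$; granting this, the uniqueness clause in the existence statement for $U_N(A)$ forces $V = \zeta\, U_N(A)$ for some scalar $\zeta$, and unitarity of both sides gives $|\zeta| = 1$, which is exactly the assertion.

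To verify the intertwining relation for $V$, I would compute directly. By Lemma~\ref{lem:factorization of Tn}, under the identification $\HN \simeq \mathcal H_{N_1}\otimes\mathcal H_{N_2}$ we have $\TN(\vn) = \opT_{N_1}^{(r_2)}(\vn) \otimes \opT_{N_2}^{(r_1)}(\vn)$. Since $V = U_{N_1,r_2}(A)\otimes U_{N_2,r_1}(A)$ is a tensor product, conjugation acts factor by factor:
\[
V^* \TN(\vn) V = \left( U_{N_1,r_2}(A)^* \opT_{N_1}^{(r_2)}(\vn) U_{N_1,r_2}(A) \right) \otimes \left( U_{N_2,r_1}(A)^* \opT_{N_2}^{(r_1)}(\vn) U_{N_2,r_1}(A) \right).
\]
Now apply the Egorov property~\eqref{egorov for Tr} on each tensor factor (valid because $r_2$ is coprime to $N_1$ and $r_1$ is coprime to $N_2$, as noted after~\eqref{equation with r1, r2}): the right-hand side equals $\opT_{N_1}^{(r_2)}(\vn A) \otimes \opT_{N_2}^{(r_1)}(\vn A)$, which by Lemma~\ref{lem:factorization of Tn} applied to the vector $\vn A$ is precisely $\TN(\vn A)$. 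This establishes the relation. One small point to address: the parity condition $A \equiv I \bmod 2$ needed to invoke~\eqref{egorov for Tr} on $\mathcal H_{N_1}$ and $\mathcal H_{N_2}$ holds automatically since it is assumed for $A$ and does not depend on the modulus.

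The final step is invoking uniqueness: $U_N(A)$ is characterized up to scalar multiples as the unitary satisfying~\eqref{egorov for T} (equivalently~\eqref{egorov for Tr} with $r=1$, $N$ in place of $N_1$), and since $V$ is unitary and satisfies the same relation, $V = \zeta\, U_N(A)$ with $\zeta \in \C$; taking operator norms (or determinants of the unitaries) gives $|\zeta| = 1$. I expect the main obstacle to be purely bookkeeping rather than conceptual: one must be careful that the uniqueness statement for the quantized map genuinely applies verbatim on a composite modulus $N$ (it does, via the cited references~\cite{KelmerAnnals}), and that the identification $\HN \simeq \mathcal H_{N_1}\otimes\mathcal H_{N_2}$ used throughout is the isometry $\iota^*$ from~\eqref{eq:iota tensor}, so that Lemma~\ref{lem:factorization of Tn} is being quoted on the correct side. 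Beyond that, the argument is a two-line consequence of the lemma and the Egorov uniqueness.
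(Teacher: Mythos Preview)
Your proposal is correct and follows essentially the same route as the paper: define the tensor product $\widetilde U = U_{N_1,r_2}(A)\otimes U_{N_2,r_1}(A)$, use Lemma~\ref{lem:factorization of Tn} together with the factor-wise Egorov relation~\eqref{egorov for Tr} to verify $\widetilde U^*\TN(\vn)\widetilde U = \TN(\vn A)$, and then invoke the uniqueness-up-to-scalar of $U_N(A)$. The only cosmetic discrepancy is that you conclude $V=\zeta\,U_N(A)$ rather than $U_N(A)=\zeta\,V$, but since $|\zeta|=1$ this is immaterial.
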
  

\begin{proof}
We saw in Lemma~\ref{lem:factorization of Tn} that 
\[
\TN(\vn A) = \opT_{N_1}^{(r_2)}(\vn A) \otimes \opT_{N_2}^{(r_1)}(\vn A) .
\]
By~\eqref{egorov for Tr}, we have
\[
\opT_{N_1}^{(r_2)}(\vn A)  = U_{N_1,r_2}(A)^* \opT_{N_1}^{(r_2)}(\vn) U_{N_1,r_2}(A) 
\]
and
\[
\opT_{N_2}^{(r_1)}(\vn A)  = U_{N_2,r_1}(A)^* \opT_{N_2}^{(r_1)}(\vn) U_{N_2,r_1}(A) .
\]
Hence $\widetilde U=  U_{N_1,r_2}(A)\otimes U_{N_2,r_1}(A)$ satisfies
\[
\begin{split}
\widetilde U^* \TN(\vn) \widetilde U &= 
\widetilde U^* \left( \opT_{N_1}^{(r_2)}(\vn) \otimes  \opT_{N_2}^{(r_1)}(\vn)  \right) \widetilde U 
\\
&=
\opT_{N_1}^{(r_2)}(\vn A)\otimes \opT_{N_2}^{(r_1)}(\vn A)=\TN(\vn A)
\end{split}
\]
for all $\vn\in \Z^{2g}$.  Since $U_N(A)$ is the unique (up to a
scalar multiple) unitary operator satisfying this relation, we must
have that $\widetilde U$ is a scalar multiple, of absolute value one,
of $U_N(A)$.
\end{proof}

\section{Bounding $\TN$ via the tensor product structure}
 \label{sec:tensor-product-yoga}
 
\subsection{Basic properties  of tensor products}
 We first recall a useful identity regarding operator norms of tensor
 products.  
  Let $V,W$ be finite dimensional inner product spaces, let
 $\opT_V :~V \to V$ and $\opT_W :~W \to W$ be linear maps, and let
 $\|\opT_V\|$ and $\|\opT_W\|$ denote the operator norms of $\opT_V$
 and $\opT_W$, respectively. There is a natural inner product on the
 tensor product $V \otimes W$ --- given orthonormal bases
 $\{v_{1},\ldots,v_{n}\}$ and $\{w_{1},\ldots, w_{m}\} $ for $V,W$,
 respectively, declare $\{ v_{i} \otimes w_{j}\}_{1 \le i \le n, 1 \le
   j \le m}$ to be an orthonormal basis for $V \otimes W$.
 We then have the relation 
\begin{equation}\label{eq: norn tensor prod}
\|\opT_V \otimes \opT_W\|= \|\opT_V\| \cdot \|\opT_W\|,
\end{equation}
between the operator
 norms (cf.~\cite[Page~299, Proposition]{ReSi}).

\subsection{Eigenspace decomposition in the coprime case}
\label{sec:eigensp-decomp-copr}
Assume that $\opT_V$ and $\opT_W$ are both diagonalizable, with eigenvalues being
roots of unity (in particular there exists, say minimal, integers
$t_{1},t_{2} > 0$ such that $\opT_V^{t_{1}} = I_V$ and $\opT_W^{t_{2}} = I_W$, where $I_V$ and  $I_W$ are the corresponding identity operators;  note
that we do not assume that $\opT_V$ and $\opT_W$ have the same dimensions).

The eigenspaces of $\opT_V \otimes \opT_W$ are
particularly easy to describe in terms of the eigenspaces of $\opT_V$ and $\opT_W$
when $\gcd(t_{1},t_{2})=1$. Namely, let $\{V_{i} \}_{i}$
denote the eigenspaces of $\opT_V$, and let $\{W_{j}\}_{j}$ denote the
eigenspaces of $\opT_W$; here we allow both $\opT_V,\opT_W$ to have eigenvalues
with multiplicities.
The eigenspaces of $\opT_V \otimes \opT_W$ are then given by
$\{ V_{i} \otimes W_{j} \}_{i,j}$. Further, if the eigenvalue
associated with $V_{i}$ is denoted $\mu_{i}$ and the eigenvalue
associated with $W_{j}$ is denoted by $\nu_{j}$, all eigenvalues of
$\opT_V \otimes \opT_W$ are of the form $\lambda_{i,j} = \mu_{i} \nu_{j}$.  In
particular, if $\gcd(t_{1},t_{2})=1$ we find that
$\mu_{i_{1}} \nu_{j_{1}} = \mu_{i_{2}} \nu_{j_{2}}$ implies that
$i_{1}=i_{2}$ and $j_{1}=j_{2}$. Informally, all multiplicities arise
by combining multiplicities from the $V_{i},W_{j}$ eigenspaces (note
that this is not true if $\gcd(t_{1},t_{2})>1$).

\subsection{Bounds using the tensor product structure}
\label{sec:bounds-using-tensor}
We now bound matrix coefficients of the special form
$\langle \TN(\vn) \psi, \psi'  \rangle$,
where, as in \S~\ref{sec:plan},
\[\TN(\vn)=  \OPN(\e\( \vx\cdot \vn \))
\]
and
$\psi,\psi'$ are eigenfunctions of $U_{N}(A)$.

Let $B$ be an element of $\Sp(2g,\Z)$. Assume that $N= N_1N_2$ with
  coprime integers $N_1>1$, $N_2>1$. 
Let $t_{i}$ denote the order of $B \mod {N_i}$, $i=1,2$. 
Further assume that 
\begin{equation}\label{eq: Coprime Ord}
\gcd(t_{1},t_{2})=1.
\end{equation}
Let $r_1,r_2\in \Z$ satisfy~\eqref{equation with r1, r2}. 
Taking  $A=B$ in Lemmas~\ref{lem:factorization of Tn} and~\ref{eq:OperFact}, 
we find
that
\[
\TN(\vn)= \opT_{N_1}^{(r_2)}(\vn) \otimes \opT_{N_2}^{(r_1)}(\vn)\quad \text{and}\quad U_{N}(B) = \zeta U_{N_1,r_{2}}(B)\otimes  U_{N_2,r_{1}}(B)
\] 
for some $\zeta\in\C^*$ with $|\zeta|=1$.

\begin{lem}
\label{lem:tensor-prod-bound}  
  Let $\psi, \psi'$ denote  norm one eigenfunctions of $U_{N}(B)$. With
  assumptions as 
  above, we then have
  \[
  |\langle \TN(\vn)\psi, \psi' \rangle|
  \le
  \max_{\varphi, \varphi' \in \Phi_{N_1, r_2}}
  | \langle  \opT_{N_1}^{(r_2)} (\vn)\varphi, \varphi' \rangle |, 
  \]   
  where $\Phi_{N_1, r_2}$ denotes the set of all
 eigenfunctions of  $U_{N_1,r_{2}}(B)$ of norm one.
\end{lem}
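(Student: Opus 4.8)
The plan is to exploit the tensor product decomposition $\TN(\vn) = \opT_{N_1}^{(r_2)}(\vn) \otimes \opT_{N_2}^{(r_1)}(\vn)$ together with the decomposition $U_N(B) = \zeta\, U_{N_1,r_2}(B) \otimes U_{N_2,r_1}(B)$, and then use the coprimality of orders~\eqref{eq: Coprime Ord} to understand the eigenspaces of $U_N(B)$. First I would observe that since $B = I \bmod 2$, both $U_{N_1,r_2}(B)$ and $U_{N_2,r_1}(B)$ have finite order (their orders divide $2t_1$ and $2t_2$ respectively, up to the scalar ambiguity — one needs only that some power is the identity), hence each is diagonalizable with eigenvalues that are roots of unity. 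Thus the discussion in \S~\ref{sec:eigensp-decomp-copr} applies with $\opT_V = U_{N_1,r_2}(B)$ and $\opT_W = U_{N_2,r_1}(B)$: every eigenspace of $U_N(B)$ is of the form $V_i \otimes W_j$ where $V_i$ ranges over eigenspaces of $U_{N_1,r_2}(B)$ and $W_j$ over those of $U_{N_2,r_1}(B)$.

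The one subtlety is that the coprimality hypothesis~\eqref{eq: Coprime Ord} is stated for the orders $t_1, t_2$ of $B$ modulo $N_1, N_2$, whereas what the eigenspace analysis needs is coprimality of the orders of the \emph{quantum} operators $U_{N_1,r_2}(B)$ and $U_{N_2,r_1}(B)$, which could a priori be $2t_1$ and $2t_2$. I would handle this by noting that it suffices to work with $U_{N_i,r_i}(B)^2$ (or to absorb the factor $2$): the key point needed is only that a common eigenvalue $\mu_{i_1}\nu_{j_1} = \mu_{i_2}\nu_{j_2}$ forces $i_1 = i_2$, $j_1 = j_2$, and this follows as soon as the \emph{ratios} $\mu_{i_1}/\mu_{i_2}$ and $\nu_{j_2}/\nu_{j_1}$ are roots of unity of coprime order; since $\mu$'s are $2t_1$-th roots of unity and $\nu$'s are $2t_2$-th roots of unity, and $\gcd(t_1,t_2)=1$ implies $\gcd(2t_1,2t_2)=2$, a root of unity that is simultaneously a $2t_1$-th and a $2t_2$-th root of unity is a square root of unity, i.e. $\pm 1$. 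This leaves a possible sign ambiguity; one resolves it by observing that $-1$ cannot arise here because $B \equiv I \bmod 2$ ensures $U_{N_i,r_i}(B)$ has order exactly dividing an odd multiple structure, or more robustly by simply replacing $\Phi_{N_1,r_2}$ by the (larger) set of all eigenfunctions and checking the sign does not affect the final bound — in either case the conclusion that eigenspaces of $U_N(B)$ are exactly tensor products $V_i \otimes W_j$ goes through.

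Granting this, I would finish as follows. Let $\psi$ be a norm-one eigenfunction of $U_N(B)$; then $\psi \in V_i \otimes W_j$ for some $i,j$, and since $V_i \otimes W_j = \{v \otimes w : v \in V_i, w \in W_j\}$-spans but is not literally rank-one, I instead argue at the level of operator norms. Restricting $\TN(\vn) = \opT_{N_1}^{(r_2)}(\vn) \otimes \opT_{N_2}^{(r_1)}(\vn)$ to a fixed eigenspace pairing: the quantity $|\langle \TN(\vn)\psi, \psi'\rangle|$ over $\psi$ in eigenspace $V_{i_1}\otimes W_{j_1}$ and $\psi'$ in $V_{i_2}\otimes W_{j_2}$ is bounded by the operator norm of the compression of $\TN(\vn)$ to $\mathrm{Hom}(V_{i_1}\otimes W_{j_1}, V_{i_2}\otimes W_{j_2})$, which by~\eqref{eq: norn tensor prod} factors as $\| P_{i_2} \opT_{N_1}^{(r_2)}(\vn) P_{i_1}\| \cdot \|Q_{j_2}\opT_{N_2}^{(r_1)}(\vn)Q_{j_1}\|$ where $P,Q$ are the relevant eigenprojections. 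The second factor is at most $\|\opT_{N_2}^{(r_1)}(\vn)\| = 1$ since $\opT_{N_2}^{(r_1)}(\vn)$ is unitary, and the first factor is exactly $\max |\langle \opT_{N_1}^{(r_2)}(\vn)\varphi, \varphi'\rangle|$ over norm-one $\varphi \in V_{i_1}$, $\varphi' \in V_{i_2}$, which are eigenfunctions of $U_{N_1,r_2}(B)$. Taking the maximum over all $i_1, i_2, j_1, j_2$ gives the claimed bound. The main obstacle is the bookkeeping in the previous paragraph around the factor of $2$ and the sign ambiguity in the scalar $\zeta$ and in the orders of the quantized operators versus the orders of $B$; everything else is a direct application of~\eqref{eq: norn tensor prod} and the eigenspace description from \S~\ref{sec:eigensp-decomp-copr}.
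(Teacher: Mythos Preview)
Your approach is essentially identical to the paper's: both use the eigenspace description from \S\ref{sec:eigensp-decomp-copr} to write $E = V_1 \otimes V_2$ and $E' = V_1' \otimes V_2'$, both pass to the compression $S = P_{E'}\TN(\vn)P_E$, factor it as $S_1 \otimes S_2$ using Lemma~\ref{lem:factorization of Tn}, apply~\eqref{eq: norn tensor prod}, and bound the $N_2$-factor trivially by~$1$.

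The one place you diverge is the discussion of the ``factor of $2$'' in the orders. You are right that the paper silently identifies the order of $B \bmod N_i$ with the order of the operator $U_{N_i,r_i}(B)$, and that a priori the latter could acquire an extra factor from the scalar ambiguity. However, your proposed fixes are not quite right: the condition $B \equiv I \bmod 2$ does not by itself force the relevant eigenvalue ratios to avoid $-1$, and your fallback (``the sign does not affect the final bound'') would require handling an eigenspace that is a \emph{direct sum} of two tensor products, which does not immediately yield a clean tensor factorization of $S$. The clean resolution is simpler: since each $U_{N_i,r_i}(B)$ is only defined up to a unit scalar, and since $U_{N_i,r_i}(B)^{t_i}$ is a scalar (by Egorov~\eqref{egorov for Tr} it commutes with all $\opT_{N_i}^{(r_i)}(\vn)$, hence is central), one may renormalise so that $U_{N_i,r_i}(B)^{t_i} = I$ exactly. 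Then the operator orders are $t_1, t_2$, coprime by hypothesis, and \S\ref{sec:eigensp-decomp-copr} applies directly with no sign issue. With that adjustment, your argument is complete and matches the paper's.
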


\begin{proof}
  Let $E,E'$ denote the eigenspaces of $U_{N}(B)$ containing
  $\psi, \psi'$, and let $\lambda, \lambda'$ denote the corresponding
  eigenvalues.

By the discussion in \S~\ref{sec:eigensp-decomp-copr}, we have
\begin{equation}\label{eq: decomp}
E = V_1 \otimes  V_2,
\end{equation}
where  $V_1$ and $V_2$ are  eigenspaces 
of
  $U_{N_1,r_{2}}(B)$ and $ U_{N_2,r_{1}}(B)$, 
respectively, and similarly we have
$E' = V_{1}' \otimes V_{2}'$. Thus, if we let
$S = P_{E'} \TN(\vn) P_{E}$, with $P_{E} : \HN \to E$ denoting the
orthogonal projection onto $E$ (and similarly for $P_{E'}$), we have
\begin{equation}\label{eq: T < S}
\max_{\substack{\psi \in E, \, \psi' \in E', \\ \|\psi\|=\|\psi'\|=1}}
|\langle \TN(\vn) \psi, \psi'  \rangle| =  \|S\|.
\end{equation}

Now, the decomposition~\eqref{eq: decomp} and Lemma~\ref{lem:factorization of Tn}, after a simple calculation, give that
\begin{equation}\label{eq: S S1 S2}
S = S_{1} \otimes S_{2}
\end{equation}
where $S_{1} = P_{V_1'} \opT_{N_1}^{(r_2)}(\vn) P_{V_1} $ and $S_{2} = P_{V_2'}
\opT_{N_2}^{(r_1)}(\vn) P_{V_2}$. 
Since both $S_{1},S_{2}$ arise as compositions of the unitary maps
$\opT_{N_1}^{(r_2)}(\vn),\opT_{N_2}^{(r_1)}(\vn)$ with orthogonal projections, they are both sub-unitary,
and we have the trivial bounds $\|S_{1}\|,\|S_{2}\| \le 1$.  
 Thus, by
the operator norm identity of tensor products~\eqref{eq: norn tensor prod}, 
 we see from~\eqref{eq: S S1 S2} that  
 \[\|S\| = \|S_{1}\| \cdot \|S_{2}\|\le \|S_{1}\|.
 \] 
  Using this,
 together with
\[
\|S_{1}\| =  \max_{\substack{\varphi \in V_1, \varphi' \in V_1' \\
\|\varphi\|=\|\varphi'\|=1}}
| \langle \opT_{N_1}^{(r_2)}(\vn) \varphi, \varphi' \rangle |, 
\]
and recalling~\eqref{eq: T < S}, the result now follows. \end{proof}

We now consider a more general case when instead of the coprimality condition~\eqref{eq: Coprime Ord}
we have 
\[
\gcd(t_{1},t_{2})=d.
\]
 
 Since any eigenfunction of  $U_{N}(A)$ is an
eigenfunction for $U_{N}\(A^d\)$ for any integer $d> 0$, we have
\begin{equation}\label{eq: A->A^d}
 \max_{\psi, \psi' \in \Psi_N}
| \langle \TN(\vn) \psi, \psi' \rangle|
\leq
 \max_{\widetilde \psi, \widetilde \psi' \in \widetilde \Psi_{N,d}}
| \langle \TN(\vn) \widetilde \psi, \widetilde \psi'\rangle|, 
\end{equation}
where $\Psi_N$ and  $ \widetilde \Psi_{N,d}$ denote the set of normalized
eigenfunctions  of $U_{N}(A)$ and $U_{N}(A^{d})$, respectively.

\section{Congruences and exponential sums} 
\subsection{Reduction to a counting problem}
\label{sec:Reduction to a counting problem}

 For a (row) vector $\vn\in \Z^{2g}$, $\vn\neq \0\bmod N$, we denote by $Q_{2\nu}(N;\vn)$ the number of solutions of the congruence 
 \begin{equation}\label{eq: Cong with-n}
\vn\left(A^{k_1}+\ldots +A^{k_{2\nu}} -A^{\ell_1} -\ldots -A^{\ell_{2\nu}}\right)   \equiv \0\bmod N,  
\end{equation}
with $1\leq k_i,\ell_i \leq  \ord(A,N) $, $i =1, \ldots, 2\nu$. 

The key inequality below connects the $4\nu$-th moment associated to
the basic observables $\TNr(\vn)$ 
with the number of
solutions to the system~\eqref{eq: Cong with-n}.  This kind of
inequality (for $\nu=1$) underlies the argument of~\cite{KR2001}, 
and also the argument of~\cite{BourgainGAFA}.

\begin{lem}\label{lem:basic inequality new}
Let $\0\neq \vn\in \Z^{2g}$ 
and let $r$ be an integer coprime to $N$.  
   Then
\begin{equation}\label{eq:basic inequality new}
   \max_{\psi,\psi'} 
  \left| \langle \TNr(\vn)\psi ,\psi'  \rangle \right|^{4\nu}
  \leq N^g \frac{Q_{2\nu}(N;\vn)}{\ord(A,N)^{4\nu} },
\end{equation}
where the maximum is taken over all pairs of normalized eigenfunctions of $U_{N,r}(A)$.
\end{lem}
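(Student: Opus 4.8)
The plan is to relate the matrix coefficient to a trace, exploit the Egorov property to turn the quantum evolution into an average over powers of $A$, and then expand a high power.

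\medskip

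\textbf{Step 1: From eigenfunctions to an averaged operator.} Fix normalized eigenfunctions $\psi,\psi'$ of $U_{N,r}(A)$ with eigenvalues of absolute value one. Set $t = \ord(A,N)$, so $U_{N,r}(A)^t$ is a scalar (since $A^t \equiv I \bmod N$ forces $U_{N,r}(A^t)$ to be a multiple of the identity, and it equals $U_{N,r}(A)^t$ up to a scalar). Using~\eqref{egorov for Tr} in the form $U_{N,r}(A)^{-j}\TNr(\vn)U_{N,r}(A)^j = \TNr(\vn A^j)$, together with the fact that $\psi,\psi'$ are eigenvectors, one gets for each $j$
\[
\left| \langle \TNr(\vn)\psi,\psi'\rangle \right| = \left| \langle \TNr(\vn A^j)\psi,\psi'\rangle \right|.
\]
Averaging over $j=1,\dots,t$ and applying Cauchy--Schwarz twice (or directly estimating by the operator norm) gives
\[
\left| \langle \TNr(\vn)\psi,\psi'\rangle \right|^{4\nu} \le \left\| \frac{1}{t}\sum_{j=1}^{t} \TNr(\vn A^j) \right\|^{4\nu} \le \left\| \left( \frac{1}{t}\sum_{j=1}^{t} \TNr(\vn A^j) \right)^{2\nu} \right\|^{2} \le \Tr\!\left[ \left| \left( \frac{1}{t}\sum_{j=1}^{t} \TNr(\vn A^j) \right)^{2\nu} \right|^{2} \right],
\]
where the last step bounds the operator norm of a matrix by the Hilbert--Schmidt norm (here $\Tr$ is the trace on $\HN$, of dimension $N^g$; with the normalized inner product the trace of the identity is $1$, which is where the factor $N^g$ will enter). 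The cleanest route is: $\|\psi\|=\|\psi'\|=1$ implies $|\langle X\psi,\psi'\rangle| \le \|X\|$, so $|\langle \TNr(\vn)\psi,\psi'\rangle|^{4\nu} \le \| (t^{-1}\sum_j \TNr(\vn A^j))^{2\nu}\|^2 \le \|(t^{-1}\sum_j \TNr(\vn A^j))^{2\nu}\|_{HS}^2$.

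\medskip

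\textbf{Step 2: Expanding the $2\nu$-th power.} Expand
\[
\left( \frac{1}{t}\sum_{j=1}^{t} \TNr(\vn A^j) \right)^{2\nu} = \frac{1}{t^{2\nu}} \sum_{j_1,\dots,j_{2\nu}=1}^{t} \TNr(\vn A^{j_1}) \cdots \TNr(\vn A^{j_{2\nu}}).
\]
Using the composition rule~\eqref{eq:TNmn add} repeatedly, each product $\TNr(\vn A^{j_1})\cdots\TNr(\vn A^{j_{2\nu}})$ equals a phase of absolute value one times $\TNr\!\big( \vn(A^{j_1}+\cdots+A^{j_{2\nu}}) \big)$. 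Then the Hilbert--Schmidt norm squared is
\[
\left\| \left( \tfrac{1}{t}\sum_j \TNr(\vn A^j) \right)^{2\nu} \right\|_{HS}^2 = \frac{1}{t^{4\nu}} \sum_{\substack{k_1,\dots,k_{2\nu} \\ \ell_1,\dots,\ell_{2\nu}}} (\text{phase}) \cdot \left\langle \TNr\!\big(\vn(A^{k_1}+\cdots+A^{k_{2\nu}})\big),\ \TNr\!\big(\vn(A^{\ell_1}+\cdots+A^{\ell_{2\nu}})\big) \right\rangle_{HS}.
\]
The Hilbert--Schmidt inner product of two basic operators $\TNr(\vm)$ and $\TNr(\vm')$ vanishes unless $\vm \equiv \vm' \bmod N$ (the operators $\{\TNr(\vm) : \vm \in (\Z/N\Z)^{2g}\}$ are orthogonal with respect to $\langle\cdot,\cdot\rangle_{HS}$), and equals $1$ (after the $N^g$ normalization, i.e.\ $\Tr(\TNr(\vm)\TNr(\vm')^*) = N^g$ in the unnormalized trace) when $\vm \equiv \vm' \bmod N$. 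Hence only the tuples with
\[
\vn\left(A^{k_1}+\cdots+A^{k_{2\nu}} - A^{\ell_1}-\cdots-A^{\ell_{2\nu}}\right) \equiv \0 \bmod N
\]
survive, each contributing a term of absolute value at most $1$ (the surviving phases can be dropped by the triangle inequality, accounting for the $N^g$ factor from the normalization convention). The number of such tuples with $1 \le k_i,\ell_i \le t$ is exactly $Q_{2\nu}(N;\vn)$. Collecting the factors $t^{-4\nu}$ and $N^g$ yields~\eqref{eq:basic inequality new}.

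\medskip

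\textbf{Main obstacle.} The routine-but-delicate point is the bookkeeping of the phase factors and the normalization constant: one must verify that $\TNr(\vn A^{j_1})\cdots\TNr(\vn A^{j_{2\nu}})$ is genuinely a unimodular multiple of $\TNr(\vn(\sum A^{j_i}))$ (iterating~\eqref{eq:TNmn add}, the accumulated phase is $\e_{2N}$ of a sum of symplectic pairings, always of modulus one), and that the orthogonality relation $\langle \TNr(\vm),\TNr(\vm')\rangle_{HS} = N^g\,[\vm\equiv\vm'\bmod N]$ holds with precisely the power of $N$ claimed given the normalized inner product on $\HN$ --- this is where the single factor $N^g$ in~\eqref{eq:basic inequality new} comes from. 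One should also note that when $\vn$ is a zero-divisor the condition $\vn(\cdots) \equiv \0$ is genuinely weaker than $A^{k_1}+\cdots \equiv O$, which is exactly why the lemma is stated with $Q_{2\nu}(N;\vn)$ rather than a matrix congruence count; no irreducibility of $f_A$ is needed at this stage.
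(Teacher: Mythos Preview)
There is a genuine gap in Step~1. From the Egorov relation you correctly deduce that
\[
\langle \TNr(\vn A^{j})\psi,\psi'\rangle
= \langle U_{N,r}(A)^{-j}\TNr(\vn)U_{N,r}(A)^{j}\psi,\psi'\rangle
= (\lambda/\lambda')^{j}\,\langle \TNr(\vn)\psi,\psi'\rangle,
\]
where $\lambda,\lambda'$ are the eigenvalues of $\psi,\psi'$. Hence the absolute values agree, but the phases do not. If you now form the \emph{untwisted} average $S=\tfrac{1}{t}\sum_{j}\TNr(\vn A^{j})$, then
\[
\langle S\psi,\psi'\rangle
= \langle \TNr(\vn)\psi,\psi'\rangle \cdot \frac{1}{t}\sum_{j=1}^{t}(\lambda/\lambda')^{j},
\]
and since $U_{N,r}(A)^{t}$ is scalar, the geometric sum vanishes whenever $\lambda\neq\lambda'$. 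So the inequality $|\langle \TNr(\vn)\psi,\psi'\rangle|\le \|S\|$ is simply not established in the off-diagonal case; your averaged operator annihilates the very matrix coefficient you want to bound. The paper repairs this by inserting the twist $\mu^{j}$ with $\mu=\lambda'/\lambda$, defining $D(\vn)=\tfrac{1}{t}\sum_{j}\mu^{j}\TNr(\vn A^{j})$, so that $\langle D(\vn)\psi,\psi'\rangle=\langle \TNr(\vn)\psi,\psi'\rangle$ exactly. The extra factors $\mu^{j}$ are unimodular and get absorbed into the phase bookkeeping in Step~2, so the rest of your expansion and the final count $Q_{2\nu}(N;\vn)$ go through unchanged.

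A second, more technical, slip: the displayed chain $\|S\|^{4\nu}\le \|S^{2\nu}\|^{2}$ is in the wrong direction (submultiplicativity gives $\|S^{2\nu}\|\le\|S\|^{2\nu}$, not the reverse), and $S$ is not a priori normal. The paper sidesteps this by passing to the positive operator $H=D^{*}D$, for which $\|D\|^{4\nu}=\|H\|^{2\nu}=\|H^{\nu}\|^{2}\le\|H^{\nu}\|_{HS}^{2}=\tr(H^{2\nu})$ is a clean chain of equalities and inequalities. Once you make these two adjustments (twist by $\mu$, and take powers of $D^{*}D$ rather than of $D$), your Step~2 expansion and the trace/orthogonality computation are essentially correct and match the paper's argument.
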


\begin{proof} We  abbreviate $\tau=\ord(A,N)$.  Given a pair $(\psi,\psi')$  of normalized eigenfunctions of $U_{N,r}(A)$ with eigenvalues $\lambda,\lambda'$, put
  $\mu = \lambda'/\lambda$ and note that $\mu$ is a root of unity (since $\lambda$ and $\lambda'$ are). Let  
\[
  D(\vn) =   
  \frac 1{\tau}\sum_{i=1}^\tau
  \UNr(A)^{-i} \TNr(\vn) \UNr(A)^i \mu^{i}
  =
  \frac 1{\tau}\sum_{i=1}^\tau  \TNr(\vn  A^i) \mu^{i}
\]
be the $\mu$-twisted time averaged observable, where the last equality
comes 
from~\eqref{egorov for Tr}. 
Then for any pair of eigenfunctions
$(\psi, \psi')$   
of $\UNr(A)$, with eigenvalues $\lambda$ and $\lambda'$, we have
\[
  \langle \TNr(\vn)\psi,\psi' \rangle
  =\langle D(\vn)\psi,\psi' \rangle,
\]
see also the proof of~\cite[Proposition 4]{KR2001}. 
Put $H(\vn) = D(\vn)^{*}D(\vn)$; note that $H(\vn)$ is Hermitian.
Clearly
\[
|\langle D(\vn)\psi,\psi'  \rangle| \le \| D(\vn) \|
= \| H(\vn) \|^{1/2},
\]
where $\| H(\vn) \|$ denotes the operator norm of
$H(\vn)$.  Therefore for any $\nu\geq 1$,
\[
  | \langle \TNr(\vn)\psi,\psi' \rangle|^{4\nu}
  \leq \|H(\vn)\|^{2\nu}
  = \|H(\vn)^{\nu}\|^{2}.
\]

We bound the operator norm by the {\it Hilbert--Schmidt norm\/}
and obtain
\[
  \|H(\vn)^{\nu}\|^{2}\leq \|H(\vn)^{\nu}\|_{HS}^{2}
  = \tr((H(\vn)^{\nu})^{*}H(\vn)^{\nu})
  = \tr(H(\vn)^{2\nu}),
\] 
where $\tr$ denotes the operator trace. 
Finally, compute
\[
\begin{split}
 H(\vn)^{2\nu}&=
  \frac 1{\tau^{4\nu}}
  \sum_{k_1,\ldots,k_{2\nu},\ell_1\ldots,\ell_{2\nu}=1}^\tau \\
 & \qquad \qquad \qquad \times  \prod_{j=1}^{2\nu}
  \(\TNr\(\vn A^{k_j}\)  \TNr\(-\vn A^{\ell_j}\)\) \mu^{ \sum_{j=1}^{2\nu}(k_j-\ell_j)}
\\
&= \frac 1{\tau^{4\nu}} \sum_{k_1,\ldots,k_{2\nu},\ell_1\ldots,\ell_{2\nu}=1}^\tau  
\gamma(\vk,\vl,\mu{}) \TNr\( \vn \sum_{j=1}^{2\nu}  \( A^{k_j}- A^{\ell_j} \) \)
\end{split}
\]
with some complex coefficients $\gamma(\vk,  \vl, \mu)$, satisfying
$|\gamma(\vk,  \vl, \mu)|=1$, where $\vk = \(k_1,\ldots,k_{2\nu}\)$ and
$\vl = \(\ell_1\ldots,\ell_{2\nu}\)$, and where the last equality comes from~\eqref{eq:TNmn add}. Taking the  trace and using  
\[
  |\tr \TNr(\vm)| =
  \begin{cases}
    N^g & \text{if $\vec m=\0\bmod N$},
    \\0& \text{otherwise,}
  \end{cases}
\] 
we find
\[
  \left| \langle \TNr(\vn)\psi,\psi' \rangle\right|^{4\nu}
  \leq \tr\( H(\vn)^{2\nu}\)
  \leq \frac {N^g}{\tau^{4\nu}} Q_{2\nu}(N;\vn) 
\]
which concludes the proof. 
\end{proof}

\begin{rem}
\label{rem:strategy}
If the right hand side of~\eqref{eq:basic inequality new} tends to
zero, then~\eqref{QUE for all N's} is satisfied, that is, all
eigenfunctions of $U_N(A)$ are uniformly distributed, and more
generally, all off-diagonal matrix coefficients tend to zero.
Thus
Lemma~\ref{lem:basic inequality new} reduces the 
problem~\eqref{QUE for all N's} to a purely arithmetic issue.
\end{rem}

 \subsection{Linear independence of matrix powers} \label{sec:linear indep}
 
 The primal goal of this section is to show  that if the characteristic polynomial $f_A$ of $A$ is 
separable over $\Q$ we can essentially eliminate the dependence on the vector $\vn$ in our  argument, except in some special cases. 
In particular, instead of   $Q_{2\nu}(N;\vn)$  we can consider 
 the number of solutions of the congruence 
\begin{equation}\label{eq: Cong no-n}
A^{k_1}+\ldots +A^{k_{2\nu}}   \equiv A^{\ell_1} +\ldots +A^{\ell_{2\nu}}\bmod N,  
\end{equation}
with $1\leq k_i,\ell_i \leq  \ord(A,N) $, $i =1, \ldots, 2\nu$. 
This is based on the following result which is also used in our bounds on 
exponential sums. However, we first need to introduce the notion of zero-divisors
amongst the  row vectors $\vn\in \Z^{2g}$. 
For this  we first identify
$ \Q^{2g}\cong \Q[X]/(f_A(X))$  as a $ \Q[A]$ module. We say that $\vn$ is a 
{\it zero-divisor\/}, if its image $\widetilde \vn \in  \Q[X]/(f_A(X))$ is a zero-divisor
in this module (we follow the convention that zero is also a zero divisor, call all 
other zero divisors {\it nontrivial.\/})

\begin{lem}\label{lin indep lemma}
Let $A\in \Sp(2g,\Z)$ have a separable characteristic polynomial. Then
for any 
  row vector $\vn\in \Z^{2g}$, which is not
a zero-divisor, we have: 
\begin{itemize}
\item[(i)]  the vectors $\vn, \vn A,  \ldots,\vn A^{2g-1}$ are linearly independent;

\item[(ii)] there exists  some $p_0(A)$, depending only on $A$, such
  that for all primes $p>p_0(A) \|\vn\|_2^{2g}$, the vectors
  $\vn, \vn A, \ldots,\vn A^{2g-1}$ are linearly independent modulo
  $p$.
\end{itemize}
\end{lem}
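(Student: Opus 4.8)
The plan is to prove (i) first, purely over $\Q$, and then deduce (ii) by a reduction-modulo-$p$ argument together with an explicit height bound. For (i), the key observation is that under the identification $\Q^{2g} \cong \Q[X]/(f_A(X))$, multiplication on the right by $A$ corresponds to multiplication by $X$ (or $\bar X$), so the span of $\vn, \vn A, \ldots, \vn A^{2g-1}$ is exactly the cyclic $\Q[A]$-submodule generated by $\widetilde\vn$. Since $f_A$ is separable, the ring $R = \Q[X]/(f_A(X))$ is a product of fields $R \cong \prod_{i} K_i$, one for each irreducible factor of $f_A$; an element $\widetilde\vn$ is a non-zero-divisor precisely when its image in each $K_i$ is nonzero, i.e.\ when $\widetilde\vn$ is a unit in $R$. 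But if $\widetilde\vn$ is a unit, the cyclic submodule it generates is all of $R$, which has $\Q$-dimension $2g$; hence $\vn, \vn A, \ldots, \vn A^{2g-1}$ span a $2g$-dimensional space and are therefore linearly independent. Equivalently, and perhaps cleaner to write: the matrix $M$ with rows $\vn A^j$, $0 \le j \le 2g-1$, satisfies $\det M = \pm \mathrm{Res}(f_A, \text{[the polynomial attached to } \vn])$ up to the discriminant of $f_A$, which is nonzero by separability and by $\vn$ not being a zero-divisor.

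For (ii), I would argue as follows. Let $M = M(\vn)$ be the $2g \times 2g$ integer matrix whose $j$-th row is $\vn A^{j-1}$, $j = 1, \ldots, 2g$. By (i), $\det M \neq 0$. The vectors $\vn, \vn A, \ldots, \vn A^{2g-1}$ are linearly independent modulo $p$ if and only if $p \nmid \det M$. So it suffices to bound $|\det M|$ and then take $p$ larger than this bound. By Hadamard's inequality,
\[
|\det M| \le \prod_{j=0}^{2g-1} \|\vn A^j\|_2 \le \prod_{j=0}^{2g-1} \|A^j\|_{\mathrm{op}}\,\|\vn\|_2 = \|\vn\|_2^{2g} \prod_{j=0}^{2g-1}\|A^j\|_{\mathrm{op}},
\]
and setting $p_0(A) = \prod_{j=0}^{2g-1}\|A^j\|_{\mathrm{op}}$ (a constant depending only on $A$) gives $|\det M| \le p_0(A)\,\|\vn\|_2^{2g}$. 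Hence for any prime $p > p_0(A)\,\|\vn\|_2^{2g}$ we have $p \nmid \det M$, which is exactly the claim. One should note $\det M \neq 0$ is genuinely needed here: if $\vn$ were a zero-divisor, $\det M$ would vanish and no bound on $p$ could rescue linear independence mod $p$ — so the hypothesis is used in both parts.

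The main obstacle — really the only nonroutine point — is part (i): getting the clean statement that "$\vn$ not a zero-divisor $\iff$ $\widetilde\vn$ a unit in $\Q[X]/(f_A(X))$ $\iff$ the cyclic module it generates is everything." This rests on the structure theorem for $R = \Q[X]/(f_A(X))$ as a finite product of field extensions of $\Q$, which is where separability of $f_A$ enters essentially (without it, $R$ would have nilpotents and a non-zero-divisor need not be a unit). Once that algebraic fact is in hand, part (ii) is a one-line Hadamard estimate. I would also remark, as the excerpt hints, that when $f_A$ is irreducible the ring $R$ is a field, every nonzero $\vn$ is a non-zero-divisor, and the lemma applies to all $\vn \neq \mathbf 0$.
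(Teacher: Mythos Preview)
Your argument is correct. For part~(i) you take a slightly different route from the paper: the paper argues directly that a dependence relation $\sum_{i=0}^{2g-1} c_i \vn A^i = \vn\bigl(\sum c_i A^i\bigr) = \0$ forces $\sum c_i A^i = 0$ (by the definition of $\vn$ not being a zero-divisor), and then invokes that separability makes the minimal polynomial of $A$ equal to $f_A$, of degree $2g$, giving a contradiction. You instead pass through the structure of $\Q[X]/(f_A(X))$ as a product of fields to identify non-zero-divisors with units, and conclude that the cyclic submodule generated by $\widetilde\vn$ is everything. Both arguments hinge on separability in the same essential way; yours is a bit more structural, the paper's a bit more direct. (Your aside about the resultant is correct in spirit but would need care about the change of basis between the standard basis and the cyclic-vector basis; it is not needed for the proof.)

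For part~(ii) you and the paper both reduce to $p \nmid \det M$, but the paper stops at ``the determinant is a nonzero integer, hence avoid its prime divisors'' without justifying the specific shape $p_0(A)\,\|\vn\|_2^{2g}$. Your Hadamard bound fills exactly this gap and produces the stated form of the threshold, so in this respect your write-up is more complete than the paper's.
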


\begin{proof}
In the case when the characteristic polynomial is irreducible,  Part~(i) is proved in~\cite[page~210]{KR2001} (for $n=2$) and in the proof of~\cite[Theorem 2.5]{OSV} (which is done over a finite field but it remains valid over any field). 

In our more general case of separability, assume that the vectors $\vn A^{i}$, $ i =0, \ldots, 2g-1$, are linearly dependent over $\Q$, that is,  there 
is a linear relation
\[
\sum_{i=0}^{2g-1} c_i \vn A^{i}=\vn \(\sum_{i=0}^{2g-1} c_i  A^{i}\) = \mathbf{0} 
\]
for some $c_i\in\Q$ not all zero. Since $\vn$ is not  a zero-divisor, we obtain
\[
\sum_{i=0}^{2g-1} c_i  A^{i} = \mathbf{0}.
\]
However, this shows that the minimal polynomial of $A$ has degree at most $2g-1$, which contradicts the fact that the minimal polynomial is $f_A$ since it is separable. This concludes the proof of Part~(i). 

To show
  Part~(ii) one considers the determinant of the matrix having rows
  $\vec n, \ldots,\vec n A^{2g-1}$ whose vanishing is equivalent to linear
  independence; it is an integer, nonzero by Part~(i), hence for all
  primes $p$ not dividing it we have linear independence mod $p$. 
\end{proof}

\subsection{Reduction to a system of exponential equations} 

We now consider~\eqref{eq: Cong with-n}  for a prime $N=p$.

Let $A\in \Sp(2g,\Z)$ have  separable characteristic polynomial
$f_A\in\Z[X]$.  Assume that $p$ is large enough so that $f_A$ is
separable modulo $p$, which also implies that $A$ is diagonalisable
over $\overline{{\mathbb F}}_{p}$.

Next, let 
\[
f_A(X)= h_1(X)\cdots h_\newr (X)  \bmod p
\]
be the factorization of $f_A$ into irreducible factors $h_i\in\F_p[X]$
of degrees $d_i=\deg h_i$, $i=1\ldots, \newr$. In
particular, any root of $h_i$ belongs to $\F_{p^{d_i}}$,
$i=1\ldots, \newr$.  For each $h_i$ we fix a root
$\lambda_i \in \F_{p^{d_i}}$ and consider the system of equations (in
the algebraic closure of $\F_p$)  
\begin{equation}
\label{eq:syst eigen}
\lambda_i^{k_1}+\cdots+\lambda_i^{k_{2\nu}}= \lambda_i^{\ell_1}+\cdots+\lambda_i^{\ell_{2\nu}}, \qquad i=1\ldots, \newr,
\end{equation}
with $1\leq k_j,\ell_j \leq\ord(A,p)$, $j= 1,\ldots, 2\nu$. 
It is easy to see that for all choices of roots $\lambda_1, \ldots, \lambda_\newr$ we get equivalent systems. 

Next,  we reduce counting the number  of solutions to~\eqref{eq: Cong no-n} to counting  the number  of solutions to~\eqref{eq:syst eigen}.

In fact our treatment depends only on the degrees $d_1, \ldots, d_\newr$ and so we 
denote the number of solutions to~\eqref{eq:syst eigen} by $R_{2\nu}(d_1,\ldots,d_\newr;p)$.

\begin{lem}
\label{lem:syst eigen}
Under the above assumptions,  there exists  some $p_0(A)$, depending only on $A$, such
that for any  
vector $\vn \in \Z^{2g}$, which is not a zero divisor,  and $p > p_0(A) \|\vn\|_2^{2g}$ 
we have 
\[
Q_{2\nu}(p;\vn) = R_{2\nu}(d_1,\ldots,d_\newr;p). 
\]
\end{lem}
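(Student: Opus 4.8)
The plan is to establish a bijection between the solution set of the matrix congruence~\eqref{eq: Cong no-n} modulo $p$ and the solution set of the system of eigenvalue equations~\eqref{eq:syst eigen}, for $p$ large enough in terms of $A$ and $\|\vn\|_2$, and then observe that $Q_{2\nu}(p;\vn)$ coincides with the number of solutions of~\eqref{eq: Cong no-n}. For the last observation, I would invoke Lemma~\ref{lin indep lemma}(ii): since $\vn$ is not a zero-divisor and $p > p_0(A)\|\vn\|_2^{2g}$, the vectors $\vn, \vn A, \ldots, \vn A^{2g-1}$ are linearly independent modulo $p$. A matrix $M = \sum_j (A^{k_j} - A^{\ell_j})$ lies in $\Z[A]$, so if $\vn M \equiv \0 \bmod p$ then, writing $M$ as a $\F_p[A]$-combination of $I, A, \ldots, A^{2g-1}$ (using Cayley--Hamilton, valid since $f_A$ has degree $2g$) and pairing with the linearly independent vectors $\vn A^i$, one concludes $M \equiv O \bmod p$. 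Hence $Q_{2\nu}(p;\vn)$ equals the number of solutions of~\eqref{eq: Cong no-n}, and the dependence on $\vn$ has been eliminated.

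Next I would count solutions of~\eqref{eq: Cong no-n} by diagonalizing. Since $p$ is large enough that $f_A$ is separable mod $p$, the matrix $A$ is diagonalizable over $\overline{\F}_p$, with eigenvalues the roots of the irreducible factors $h_1, \ldots, h_\newr$; grouping the roots by which factor they come from, the eigenvalues of $A$ over $\overline{\F}_p$ are the Galois conjugates $\lambda_i^{p^t}$ ($0 \le t < d_i$) of the chosen roots $\lambda_i \in \F_{p^{d_i}}$. Conjugating $A$ to diagonal form, the congruence $\sum_j A^{k_j} \equiv \sum_j A^{\ell_j} \bmod p$ becomes the system of scalar equations $\sum_j \mu^{k_j} = \sum_j \mu^{\ell_j}$ as $\mu$ ranges over all eigenvalues of $A$ (in $\overline{\F}_p$). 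But an equation for $\mu = \lambda_i^{p^t}$ is, after raising to the $p^{d_i - t}$ power (which is a bijection on $\overline{\F}_p$ and commutes with addition in characteristic $p$), equivalent to the equation for $\lambda_i$ itself; so the full system collapses to exactly one equation per irreducible factor, namely~\eqref{eq:syst eigen}. It is here that one uses that the equations only involve $+$ and exponentiation, so the Frobenius is a symmetry. Thus the number of solutions of~\eqref{eq: Cong no-n} equals $R_{2\nu}(d_1, \ldots, d_\newr; p)$, and since the range $1 \le k_j, \ell_j \le \ord(A,p)$ is the same for the matrix $A$ and each $\lambda_i$ (as $\ord(\lambda_i, p) = \ord(A, p)$ by separability, the order of $A$ being the lcm of the orders of its eigenvalues, which all coincide here up to the fact that $A$ generates the same cyclic structure --- more carefully, $A^k \equiv I \bmod p$ iff $\lambda_i^k = 1$ for all $i$, and one checks the relevant count is unaffected), we are done.

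One technical point deserving care: the passage from ``$A^{k_1} + \cdots \equiv A^{\ell_1} + \cdots \bmod p$'' to the diagonal statement requires that conjugation to diagonal form is defined over some finite extension of $\F_p$ and is invertible there, which holds because $f_A$ is separable mod $p$; and the reduction of the $\mu = \lambda_i^{p^t}$ equation to the $\lambda_i$ equation must be stated as an equivalence of solution \emph{sets} in the variables $(k_j, \ell_j)$, not merely an implication. I would also note the remark, already made in the excerpt, that the particular choice of roots $\lambda_1, \ldots, \lambda_\newr$ is immaterial, since Galois conjugation permutes solutions.

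I expect the main obstacle to be the bookkeeping in the second paragraph: cleanly arguing that the system over all eigenvalues of $A$ is equivalent (as a set of constraints on $(\vk, \vl)$) to the reduced system~\eqref{eq:syst eigen} with one equation per factor, using the Frobenius-invariance of power sums. The algebra is routine but it is easy to be sloppy about whether one has an equivalence or only one direction, and about the interplay between the common order $\ord(A,p)$ and the orders $\ord(\lambda_i, p)$. The first paragraph (eliminating $\vn$ via linear independence and Cayley--Hamilton) and the final identification of ranges are comparatively straightforward given Lemma~\ref{lin indep lemma}.
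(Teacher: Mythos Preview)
Your proposal is correct and follows essentially the same route as the paper: eliminate $\vn$ via Lemma~\ref{lin indep lemma}(ii) (you use Cayley--Hamilton to write $M\in\F_p[A]$ in the basis $I,A,\ldots,A^{2g-1}$ and read off the coefficients from $\vn M\equiv\0$; the paper instead multiplies $\vn B\equiv\0$ on the right by $A^i$, uses $AB=BA$, and inverts the matrix with rows $\vn A^i$ --- these are equivalent), then diagonalise over $\overline{\F}_p$ and collapse Galois-conjugate eigenvalue equations via Frobenius. Your worry about the ranges is unnecessary: both $Q_{2\nu}(p;\vn)$ and $R_{2\nu}(d_1,\ldots,d_t;p)$ are by definition counted over the \emph{same} interval $1\le k_j,\ell_j\le\ord(A,p)$, so no comparison of $\ord(A,p)$ with the individual $\ord(\lambda_i,p)$ is needed.
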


 \begin{proof}
 Let us denote
 \[
 B=A^{k_1}+\dots +A^{k_{2\nu}} -A^{\ell_1} -\dots -A^{\ell_{2\nu}}.
 \]
 Multiplying~\eqref{eq: Cong with-n} by powers of $A$, we conclude that 
 \[
 \(\vn  A^i\)B \equiv \0 \bmod p,\qquad i=0,\ldots,2g-1,
 \]
 which is equivalent to 
 \[
 \begin{pmatrix}
 \vn \\ \vn  A\\ \cdots\\ \vn A^{2g-1}
 \end{pmatrix}B \equiv\0  \bmod p.
 \]
 From Lemma~\ref{lin indep lemma}~(ii), there  exists  some  $p_0(A)$, depending only on $A$,  such that for  $p > p_0(A) \|\vn\|_2^{2g}$ 
 all rows  $\vn, \vn A, \ldots,\vn A^{2g-1}$ are linearly independent modulo $p$, and thus from the above we conclude that $B$ vanishes over $\F_p$. 
  
 Since $A$ is diagonalisable over  $\overline{{\mathbb F}}_{p}$, the
 equation above is equivalent to 
 \[
 \Lambda^{k_1}+\ldots +\Lambda^{k_{2\nu}} \equiv \Lambda^{\ell_1} + \ldots +\Lambda^{\ell_{2\nu}}  \bmod p,
 \]
 where $\Lambda$ is a diagonal matrix with elements on the diagonal all the roots of $h_i$, $i=1,\ldots, \newr$. Since for each irreducible factor of $f$ modulo $p$, all roots are conjugate (that is, the roots of $h_i$ in $\F_{p^{d_i}}$ are $\lambda_i,\lambda_i^p,\ldots,\lambda_i^{p^{d_i-1}}$), we conclude the proof. \end{proof}

 \section{Multiplicative orders and exponential sums}
 
\subsection{Ergodicity and the order modulo $p$}
It is natural that our argument, as in~\cite{BourgainGAFA, KR2001, OSV},  rests 
on various results on multiplicative orders.

We begin by showing that the multiplicative orders of the eigenvalues
of $A\in \Sp(2g,\Z)$, and their ratios, are sufficiently large for
almost all primes.  The argument is a modification of that of
Hooley~\cite{Hool}.
  
We recall the definition of $\ord(\lambda,p)$ in \S~\ref{sec:not}
and also  that we  say that $p$ is split prime if the characteristic polynomial 
of the matrix $A$ splits completely modulo $p$. 

 \begin{lem}
 \label{lem:ord}
 Assume that $A\in \Sp(2g,\Z)$ has separable characteristic
 polynomial and that no eigenvalue or ratio of distinct eigenvalues is a root of unity. 
 Let
 $\lambda_1,\ldots,\lambda_{2g}$ be the   eigenvalues of $A$.
 Then for almost all  split primes $p$ we have
\[
 \ord(\lambda_i,p) , \ord(\lambda_i/\lambda_j ,p) > p^{1/2}/\log p,
 \quad 1\le  i\ne j \le 2g.    
 \]
 \end{lem}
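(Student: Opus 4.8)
The plan is to adapt Hooley's method for the classical problem on the size of the multiplicative order of a fixed integer modulo $p$. For a fixed eigenvalue $\lambda=\lambda_i$ (or a ratio $\lambda=\lambda_i/\lambda_j$, $i\neq j$), working in the number field $K=\Q(\lambda_1,\dots,\lambda_{2g})$, the condition $\ord(\lambda,p)\le T$ for a prime $p$ split in $K$ means that $\lambda^m\equiv 1\pmod{\mathfrak p}$ for some $m\le T$ and some prime $\mathfrak p$ of $K$ above $p$; equivalently $\mathfrak p$ divides the ideal $(\lambda^m-1)$ for some $1\le m\le T$. So the number of split primes $p\le x$ with $\ord(\lambda,p)\le T$ is at most
\[
\sum_{m\le T}\#\{\mathfrak p:\ \mathfrak p\mid(\lambda^m-1),\ \Nm(\mathfrak p)\le x\}
\le \sum_{m\le T}\omega_K\big((\lambda^m-1)\big),
\]
where $\omega_K$ counts prime ideals. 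Since $\lambda$ is not a root of unity (this is exactly where the hypothesis is used), $\lambda^m-1\neq 0$ for all $m\ge1$, and its absolute norm satisfies $|\Nm_{K/\Q}(\lambda^m-1)|\le C^m$ for a constant $C=C(A)$ (bound each archimedean absolute value of $\lambda^m-1$ by $(\max_v|\lambda|_v+1)^m$). Hence $\omega_K((\lambda^m-1))\ll m/\log(m+1)$ by the trivial bound on the number of prime ideal divisors of an ideal of bounded norm, and
\[
\#\{p\le x\ \text{split}:\ \ord(\lambda,p)\le T\}\ll \sum_{m\le T}\frac{m}{\log(m+2)}\ll \frac{T^2}{\log T}.
\]

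Taking $T=x^{1/2}/\log x$ gives a bound $\ll x/(\log x)^3$, which is $o(x/\log x)$, i.e.\ $o(\pi(x))$, so the set of split primes $p\le x$ for which $\ord(\lambda,p)\le x^{1/2}/\log x$ has density zero among all primes (a fortiori among split primes). Summing this over the finitely many choices of $\lambda\in\{\lambda_i\}\cup\{\lambda_i/\lambda_j:i\neq j\}$ — there are at most $2g+2g(2g-1)$ of them, all non-roots-of-unity by hypothesis, and all lying in the fixed field $K$ — shows that for almost all split primes $p$ we simultaneously have $\ord(\lambda_i,p)>p^{1/2}/\log p$ and $\ord(\lambda_i/\lambda_j,p)>p^{1/2}/\log p$ for all $1\le i\neq j\le 2g$. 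One small point to address: for $p$ split in $K$ and coprime to the relevant discriminants and to the numerators/denominators of the $\lambda$'s, the reduction $\lambda\bmod\mathfrak p$ lies in $\F_p^\times$ and its order there equals $\ord(\lambda,p)$ in the sense of \S\ref{sec:not}, and one must exclude the finitely many ramified primes and primes dividing $\disc(f_A)$ — these form a set of size $O(1)$, hence negligible.

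The main obstacle, such as it is, is purely bookkeeping: being careful that "$\ord(\lambda,p)\le T$" is correctly translated into a divisibility statement about the principal ideal $(\lambda^m-1)$ in the ring of integers (or a suitable localization) of $K$, uniformly over the finitely many $\lambda$, and that the exceptional primes (ramified, or dividing denominators of $\lambda$, or dividing $\disc f_A$) are handled once and for all. There is no analytic difficulty here — the estimate $\omega_K(\mathfrak a)\ll \log\Nm(\mathfrak a)/\log\log\Nm(\mathfrak a)$ together with the geometric growth $\Nm(\lambda^m-1)\le C^m$ is entirely elementary, and the crucial input "$\lambda$ is not a root of unity" is precisely the hypothesis of the lemma, guaranteeing $\lambda^m-1\neq0$ so that these ideals are nonzero. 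The only genuinely essential hypothesis beyond ergodicity is that \emph{ratios} of distinct eigenvalues are also not roots of unity, which is exactly what lets us include the $\lambda_i/\lambda_j$ in the union.
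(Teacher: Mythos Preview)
Your proposal is correct and follows essentially the same Hooley-type argument as the paper. The only cosmetic difference is that the paper bundles all eigenvalues and ratios into a single nonzero integer
\[
A(Y)=\prod_{n\le Y}\prod_i \Nm_{K/\Q}(\lambda_i^n-1)\prod_{j<h}\Nm_{K/\Q}(\lambda_j^n-\lambda_h^n)
\]
and counts rational prime divisors of $A(Y)$, whereas you treat each $\lambda$ separately and count prime ideal divisors of $(\lambda^m-1)$; both routes give the same $O(T^2/\log T)$ bound on exceptional primes and the same choice $T=x^{1/2}/\log x$.
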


  \begin{proof} 
  For a sufficiently large $Y\ge 2$, let 
 \[ A(Y) = 
 \prod_{n\leq Y}  \prod_{1 \le  i  \le 2g}   \Nm_{K/\Q}(\lambda_i^n -  1)  
  \prod_{1 \le j < h \le 2g}   \Nm_{K/\Q}(\lambda_j^n -  \lambda_h^n) , 
 \]
 where $\Nm_{K/\Q}\(\zeta\)$ is the  {\it norm\/} of  $\zeta \in K= Q\(\lambda_1, \ldots, \lambda_n\)$ in $\Q$. 
  Note that $A(Y) \neq 0$ because of the condition on the avoidance of roots of unity
  among the  eigenvalues and their ratios, and $A(Y) \in \Z$ since all eigenvalues are algebraic 
  integers.   Since  
  \[
   \Nm_{K/\Q}\(\lambda_i^n -  1\)=\prod_{\sigma\in \Gal(K/\Q)}\(\sigma(\lambda_i)^n -  1\)
\]
   and
\[
    \Nm_{K/\Q}\(\lambda_j^n -  \lambda_h^n\)=\prod_{\sigma\in \Gal(K/\Q)}\(\sigma(\lambda_j)^n-\sigma(\lambda_h)^n\),
\] 
  where both products are over all automorphisms $\sigma$ from the Galois group $\Gal(K/\Q)$ of $K$ over $\Q$,  and thus
\[
\log \Nm_{K/\Q}(\lambda_i^n -  1), \, \log \Nm_{K/\Q}(\lambda_j^n -  \lambda_h^n)\ll n,
\] 
we see that 
    \begin{equation}
\label{eq:large AY}
\log |A(Y)| \ll Y^2.
\end{equation} 
  
Let  $ \cP(Y)$ be the set of primes for which 
\[
 \min_{1 \le  i  \le 2g} \min_{1 \le j < h \le 2g}  \{\ord(\lambda_i,p) , \ord(\lambda_j/\lambda_h ,p)\} \le Y. 
 \]
We observe that for $p\in \cP(Y)$, we must have $p\mid A(Y)$, 
and hence 
\begin{equation}
\label{eq:omega AY}
\sharp  \cP(Y) \le \omega\(A(Y)\),
\end{equation}
where, as usual,  $\omega(k)$ denotes  the number 
of prime divisors of the integer $k \ge 1$. 
From the trivial observation that $\omega(k)! \le k$ and the Stirling formula, we derive 
\begin{equation}
\label{eq:omega k}
\omega(k) \ll \frac{\log k }{\log \log (k+2)}, \qquad k \ge 1.
\end{equation}
Putting together~\eqref{eq:large AY}, \eqref{eq:omega AY} and~\eqref{eq:omega k}, we see that 
\[
 \sharp\mathcal P(Y)\ll Y^2 /\log Y.
 \]
Since the number of primes $p\leq X$  is $\pi(X) \sim X/\log X$, we can take $Y=\sqrt{X}/\log X$ to assure that for  all but $o\(\pi(X)\)$ primes 
$p \le X$, we have 
\[
 \ord(\lambda_i,p) , \ord(\lambda_i/\lambda_j ,p)   >\sqrt{X}/\log X\geq \sqrt{p}/\log p,\quad  1 \le i \ne j \le 2g. 
 \]
Since splitting fields of polynomials are Galois extensions, by the Chebotarev Density Theorem, see~\cite[Theorem~21.2]{IwKow}, for a positive proportion of  primes $p$,  see our convention in \S~\ref{sec:not}, the characteristic polynomial of the matrix $A$ splits modulo $p$. This concludes the proof. 
\end{proof}

\subsection{Relation with short exponential sums} 
As discussed in \S~\ref{sec:Reduction to a counting problem}, one
relates the uniform distribution of the eigenfunctions of the operator
$U_N(A)$, as well as the decay of off-diagonal matrix elements, to
bounding the number of solutions $Q_{2\nu}(N;\vec n)$ for
$\vec n\in \Z^{2g}$ to the matrix congruence~\eqref{eq: Cong with-n},
see  Lemma~\ref{lem:basic inequality new}.

Following the discussion after Theorem~\ref{thm:integers} and Lemma~\ref{lem:basic inequality new}, we thus reduce the problem to showing that 
\[
Q_{2\nu}(p;\vec n)  = o\left(\frac{\ord(A,p)^{4\nu}}{p^g} \right)
\]
for a set of `good' primes $p$ for which the characteristic polynomial of $A$ splits completely over $\F_p$, with  eigenvalues $\lambda_i\in \F_p^*$, $i=1,\ldots,2g$.

 In turn, using the orthogonality of exponential sums, this leads us to a problem of obtaining nontrivial cancellation in exponential sums of the form
\[
\sum_{j=1}^{\ord(A,p)} \ep\( \alpha_1 \lambda_1^{j} + \ldots+ \alpha_{2g}\lambda_{2g}^{j}\)
  \]
  for $(\alpha_1,\ldots,\alpha_{2g})\in \F_p^{2g}$.
  
   These exponential sums are not
 treatable by algebro-geometric methods of Weil and Deligne, but
 fortunately they can be treated by methods from additive
 combinatorics.  In particular, we make use of the bounds of
 Bourgain~\cite[Corollary]{Bourgain Mordell}
 on Mordell type sums over prime  fields.
   
   \begin{lem} \label{lem:exp sum split} 
   For every $\varepsilon > 0$ there exists some $\delta > 0$ such that the following holds. 
Let $\alpha_1, \ldots, \alpha_s \in \F_p$, not all zero, and $\lambda_1,\ldots, \lambda_s\in \F_p^*$ be such that
\[
\ord(\lambda_i,p),\quad  \ord(\lambda_i/\lambda_j,p) \ge p^{\varepsilon}, 
\qquad  1\le i, j\le s, \ i \ne j. 
\]
Then 
\[
 \left|\sum_{x=1}^{T} \ep\( \alpha_1 \lambda_1^{x} + \ldots+ \alpha_{s}\lambda_s^{x}\) \right| \ll T p^{-\delta}, 
  \]
  where $T$ is the order of the  subgroup of $\F_p^*$ generated by $\lambda_1,\ldots, \lambda_s$.
  \end{lem}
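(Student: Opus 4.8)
\textbf{Proof plan for Lemma~\ref{lem:exp sum split}.}
The plan is to deduce the bound from Bourgain's estimate on Mordell-type exponential sums over prime fields, after a reduction that reorganizes the sum along the cyclic group generated by the $\lambda_i$. First I would let $G \leq \F_p^*$ be the subgroup generated by $\lambda_1,\ldots,\lambda_s$, so $|G| = T$, and fix a generator $g$ of $G$. Writing $\lambda_i = g^{a_i}$ for integers $a_i$ with $0 \le a_i < T$, the sum becomes
\[
S = \sum_{x=1}^{T} \ep\!\left( \sum_{i=1}^s \alpha_i g^{a_i x} \right).
\]
Running $x$ over a complete set of residues modulo $T$ makes $g^x$ run over all of $G$, so $S = \sum_{y \in G} \ep\!\left( \sum_{i=1}^s \alpha_i y^{a_i} \right)$, which is exactly a Mordell-type sum over the multiplicative subgroup $G$ with monomials $y^{a_1},\ldots,y^{a_s}$.

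The key point is to check that the hypotheses of Bourgain's theorem~\cite[Corollary]{Bourgain Mordell} are met, namely a lower bound $T = |G| \ge p^{\eta}$ for some fixed $\eta > 0$, together with the condition that the exponents $a_i$ (and their pairwise differences $a_i - a_j$) are suitably nondegenerate with respect to the subgroup — precisely, that $\gcd(a_i, T)$ and $\gcd(a_i - a_j, T)$ are small, equivalently that $\lambda_i = g^{a_i}$ and $\lambda_i/\lambda_j = g^{a_i - a_j}$ each generate large subgroups of $G$. But $\ord(\lambda_i, p) = T/\gcd(a_i, T)$ and $\ord(\lambda_i/\lambda_j, p) = T/\gcd(a_i - a_j, T)$, so the hypothesis $\ord(\lambda_i,p), \ord(\lambda_i/\lambda_j,p) \ge p^{\varepsilon}$ translates directly into $\gcd(a_i, T) \le T p^{-\varepsilon}$ and $\gcd(a_i - a_j, T) \le T p^{-\varepsilon}$. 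In particular, since $T \le p$, each of $\ord(\lambda_i,p)$ divides $T$, forcing $T \ge \ord(\lambda_i, p) \ge p^{\varepsilon}$, which supplies the needed lower bound on $|G|$ with $\eta = \varepsilon$. Feeding these into Bourgain's estimate yields $|S| \ll T p^{-\delta}$ for some $\delta = \delta(\varepsilon) > 0$, and renaming the variable back to $x$ gives the claim.

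The main obstacle is bookkeeping: one must make sure that the precise formulation of Bourgain's result (which is typically stated for sums $\sum_{x \in H} e_p(\sum_i c_i x^{k_i})$ over a subgroup $H$ with coprimality/size conditions on the $k_i$ modulo $|H|$) matches the shape obtained above, handling the possibilities that some $a_i$ coincide, that some $\alpha_i$ vanish, and that the "effective" number of distinct nonzero monomials may be smaller than $s$ — in all these cases one simply discards the trivial terms and applies Bourgain's bound to the surviving genuinely multivariate Mordell sum, noting that if only one monomial survives with nonzero coefficient the hypothesis still gives a single-variable Gauss-type sum over a large subgroup, again bounded by $T p^{-\delta}$. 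A secondary technical point is that $\delta$ must be chosen uniformly in $p$ and in the data $(\alpha_i, \lambda_i, T)$, depending only on $\varepsilon$; this is exactly the uniformity built into Bourgain's theorem, so it transfers without difficulty.
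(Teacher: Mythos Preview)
Your proposal is correct and matches the paper's approach: the paper does not give a separate proof but simply cites Bourgain's Corollary on Mordell sums, and your write-up supplies exactly the standard translation (substituting $y=g^x$ to pass to a sum over the cyclic subgroup $G$, and converting the order conditions on $\lambda_i$ and $\lambda_i/\lambda_j$ into the gcd conditions on the exponents $a_i$ and $a_i-a_j$ modulo $T$) that makes the citation apply. The bookkeeping points you flag (collapsing repeated exponents, dropping zero coefficients, uniformity of $\delta$ in $p$) are handled by Bourgain's statement, so nothing further is needed.
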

  
According to Lemma~\ref{lem:syst eigen}, in the split  case, that is,
for  $\lambda_i\in \F_p^*$, $i=1,\ldots,2g$, the number of solutions
to the system~\eqref{eq:syst eigen} is  given by $Q_{2\nu}(p;\vec
n)=R_{2\nu}(1,\ldots,1;p)$  (with $\newr=2g$ therein).   
 Using the  orthogonality of exponential functions  we obtain
 \[
 \begin{split}
Q_{2\nu}(p;\vec n)&=R_{2\nu}(1,\ldots,1;p) \\
& = \frac 1{p^{2g}} \sum_{\vec \alpha \in \F_p^{2g}} \left| \sum_{t=1}^T \ep\left( \alpha_1 \lambda_1^t+\ldots +\alpha_{2g}\lambda_{2g}^t\right) \right|^{4\nu}
\\
&< \frac{T^{4\nu}}{p^{2g}} + \max_{\0\neq \vec \alpha\in \F_p^{2g}} \left| \sum_{t=1}^T \ep\left( \alpha_1 \lambda_1^t+\ldots +\alpha_{2g}\lambda_{2g}^t\right) \right|^{4\nu} .
 \end{split}
 \]
  Inserting Lemma~\ref{lem:exp sum split}, exactly as in~\cite{BourgainGAFA}, we derive
  \[
Q_{2\nu}(p;\vec n)\ll \frac{T^{4\nu}}{p^{2g}} + T^{4\nu} p^{-4\nu \delta} \leq 2 \frac{T^{4\nu}}{p^{2g}} 
  \]   for $\nu\geq g/(2\delta)$. Hence we find:
 \begin{cor} \label{cor:exp sum split and Q} 
 Let $A\in \Sp(2g,\Z)$ have  separable characteristic polynomial. 
 For every $\varepsilon > 0$ there exists some integer $\nu_0 > 0$  such that the following holds.  For  a prime $p$ so that $A$ splits modulo $p$, let the eigenvalues  of $A$ be 
\[
 \lambda_1,\ldots, \lambda_{2g}\in \F_p^* . 
 \]
Assume that
\[
\ord(\lambda_i,p),\quad  \ord(\lambda_i/\lambda_j,p) \ge p^{\varepsilon}, 
\qquad  1\le i, j\le 2g, \ i \ne j. 
\]
Then,  for  any vector $\vn \in \Z^{2g}$,
which is not a zero-divisor and such that for  $p > p_0(A)
\|\vn\|_2^{2g}$, where $p_0(A)$ is as in Lemma~\ref{lem:syst eigen},
and all $\nu\geq \nu_0$,  
we have
\[
Q_{2\nu}(p;\vec n) \ll \frac{\ord(A,p)^{4\nu}}{p^{2g}}. 
 \]
  \end{cor}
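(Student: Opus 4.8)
This corollary is the arithmetic payoff of combining two tools already set up above: Lemma~\ref{lem:syst eigen}, which in the split case removes all dependence on the vector $\vn$, and Lemma~\ref{lem:exp sum split}, Bourgain's bound for Mordell-type sums. The plan is to run exactly the orthogonality argument sketched in the paragraph preceding the statement, keeping track of the choice of $\nu_0$.

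First I would apply Lemma~\ref{lem:syst eigen}. Since $\vn$ is not a zero-divisor and $p > p_0(A)\|\vn\|_2^{2g}$, that lemma gives $Q_{2\nu}(p;\vn) = R_{2\nu}(d_1,\dots,d_\newr;p)$, where the $d_i$ are the degrees of the irreducible factors of $f_A$ modulo $p$; as $A$ splits modulo $p$ we have $\newr = 2g$ and $d_1 = \dots = d_{2g} = 1$, so $Q_{2\nu}(p;\vn) = R_{2\nu}(1,\dots,1;p)$, the number of tuples $1 \le k_1,\dots,k_{2\nu},\ell_1,\dots,\ell_{2\nu} \le T$, with $T = \ord(A,p)$, solving $\sum_{j=1}^{2\nu}\lambda_i^{k_j} = \sum_{j=1}^{2\nu}\lambda_i^{\ell_j}$ for $i = 1,\dots,2g$. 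Next I would expand this count by orthogonality of the additive characters of $\F_p$: writing the $i$-th equation as $p^{-1}\sum_{\alpha_i \in \F_p}\ep(\alpha_i(\sum_j\lambda_i^{k_j}-\sum_j\lambda_i^{\ell_j}))$, multiplying over $i$ and summing over all $k_j,\ell_j$, the sum factorizes to give
\[
Q_{2\nu}(p;\vn) = \frac{1}{p^{2g}}\sum_{\vec\alpha\in\F_p^{2g}}\left|\sum_{t=1}^{T}\ep(\alpha_1\lambda_1^{t}+\dots+\alpha_{2g}\lambda_{2g}^{t})\right|^{4\nu},
\]
whose $\vec\alpha = \0$ term equals $T^{4\nu}/p^{2g}$.

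For each $\vec\alpha \neq \0$, the hypothesis $\ord(\lambda_i,p),\ord(\lambda_i/\lambda_j,p) \ge p^\varepsilon$ lets me invoke Lemma~\ref{lem:exp sum split}, with the $\delta = \delta(\varepsilon) > 0$ it provides, to get $|\sum_{t=1}^{T}\ep(\alpha_1\lambda_1^t+\dots+\alpha_{2g}\lambda_{2g}^t)| \ll Tp^{-\delta}$. Summing over the at most $p^{2g}$ nonzero $\vec\alpha$ yields
\[
Q_{2\nu}(p;\vn) \ll \frac{T^{4\nu}}{p^{2g}} + T^{4\nu}p^{-4\nu\delta}.
\]
Then I would fix $\nu_0 = \lceil g/(2\delta)\rceil$: for every $\nu \ge \nu_0$ we have $p^{-4\nu\delta} \le p^{-2g}$, so the second term is dominated by the first. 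Finally, because $f_A$ is separable modulo $p$ (this is part of what $p > p_0(A)$ buys us in Lemma~\ref{lem:syst eigen}) and splits, $A$ is diagonalizable over $\F_p$ with the $\lambda_i$ as eigenvalues, so $T = \ord(A,p) = \lcm_{1\le i\le 2g}\ord(\lambda_i,p)$, and the bound becomes $Q_{2\nu}(p;\vn) \ll \ord(A,p)^{4\nu}/p^{2g}$, as claimed.

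There is no real obstacle here: the substance is entirely in Lemmas~\ref{lem:syst eigen} and~\ref{lem:exp sum split}, and the corollary is bookkeeping. The only point requiring a line of care is the application of Lemma~\ref{lem:exp sum split} for a general $\vec\alpha$: that lemma is stated for tuples whose entries generate a group of order exactly $T$, but when $\vec\alpha$ has some zero coordinates the exponential sum only involves the $\lambda_i$ with $\alpha_i\neq 0$, which generate a subgroup of some order $T'\mid T$. Since the summand is then $T'$-periodic, $\sum_{t=1}^{T} = (T/T')\sum_{t=1}^{T'}$, and applying Lemma~\ref{lem:exp sum split} to this shorter tuple (the order hypotheses still hold for it) gives $|\sum_{t=1}^{T'}|\ll T'p^{-\delta}$, hence $|\sum_{t=1}^{T}| \ll Tp^{-\delta}$ as needed.
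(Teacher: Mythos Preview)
Your proof is correct and follows the paper's argument essentially verbatim: reduce to $R_{2\nu}(1,\dots,1;p)$ via Lemma~\ref{lem:syst eigen}, expand by orthogonality, isolate the $\vec\alpha=\0$ term, bound the rest with Lemma~\ref{lem:exp sum split}, and choose $\nu_0=\lceil g/(2\delta)\rceil$. Your final paragraph on the case where some $\alpha_i$ vanish is more careful than necessary, since Lemma~\ref{lem:exp sum split} as stated in the paper already allows arbitrary $\alpha_i\in\F_p$ (not all zero) with $T$ defined as the order of the full group generated by all the $\lambda_i$; but the periodicity reduction you give is a valid way to see why that formulation holds.
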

  
 \subsection{Bounding  $\langle \opT_p^{(r)}(\vn)\psi ,\psi'  \rangle $ for a positive proportion of primes}
 
  We remark that the assumptions of Lemma~\ref{lem:exp sum split}  and
Corollary~\ref{cor:exp sum split and Q} 
 hold for a positive
proportion of primes $p$ (in fact, for a full density subset of the
set of primes $p$ for which the characteristic polynomial of $A$
splits completely, see Lemma~\ref{lem:GoodPrimes}).   
Hence, combining Lemma~\ref{lem:basic inequality new} and 
 Corollary~\ref{cor:exp sum split and Q}, we obtain the desired 
 estimate~\eqref{eq:key bound on p intro}  
 on $ \langle \opT_p^{(r)}(\vn) \psi ,\psi'  \rangle$ when $\vn$ is not a zero-divisor. 
 
 \begin{cor}\label{cor:Tpn-bound}
Let $A\in \Sp(2g,\Z)$ have  separable characteristic polynomial. There exists some constant $\gamma>0$, depending only on $A$,  
such  that 
for a positive proportion of primes $p$ the following holds: 
For all  integers $r$ coprime to $p$, 
and for any    $\vn\in \Z^{2g}$, which is not a zero-divisor and with
$p > p_0(A) \|\vn\|_2^{2g}$, where $p_0(A)$ is as in
Lemma~\ref{lem:syst eigen},  
\[
\max_{\psi , \psi'  }
  \left| \langle \opT_p^{(r)}(\vn)\psi ,\psi'  \rangle \right|
  \le  p^{-\gamma},
  \]
  the maximum over all pairs of normalized eigenvectors of  $U_{p,r}(A)$. 
\end{cor}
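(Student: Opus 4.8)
The plan is to assemble the pieces already established: Lemma~\ref{lem:basic inequality new} bounds a power of the matrix coefficient by the counting quantity $Q_{2\nu}(p;\vn)$, Corollary~\ref{cor:exp sum split and Q} bounds that quantity for split primes with large multiplicative orders of the eigenvalues and their ratios, and Lemma~\ref{lem:ord} guarantees that such primes form a positive proportion of all primes.

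First I would fix once and for all $\varepsilon=1/4$, let $\nu_0=\nu_0(A,\varepsilon)$ and $p_0(A)$ be as in Corollary~\ref{cor:exp sum split and Q} and Lemma~\ref{lem:syst eigen}, and set $\nu=\nu_0$; this fixes the exponent we will use. Let $\cP$ be the set of primes $p$ for which $f_A$ splits completely modulo $p$ and, in addition, $\ord(\lambda_i,p),\ord(\lambda_i/\lambda_j,p)>p^{1/2}/\log p$ for all $1\le i\neq j\le 2g$. By Lemma~\ref{lem:ord} (recall the standing assumption that no eigenvalue or ratio of distinct eigenvalues of $A$ is a root of unity) the split primes have positive density and all but $o(\pi(x))$ of them satisfy the order condition, so $\cP$ has positive density. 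For $p\in\cP$ large enough we have $p^{1/2}/\log p\ge p^{\varepsilon}$, so the hypotheses of Corollary~\ref{cor:exp sum split and Q} are satisfied at such $p$.

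Now fix $p\in\cP$ with $p>p_0(A)\|\vn\|_2^{2g}$, where $\vn\in\Z^{2g}$ is not a zero-divisor, and an arbitrary integer $r$ coprime to $p$. Applying Lemma~\ref{lem:basic inequality new} with $N=p$, and then Corollary~\ref{cor:exp sum split and Q} (note that $Q_{2\nu}(p;\vn)$ is independent of $r$), we obtain
\[
\max_{\psi,\psi'}\left|\langle \opT_p^{(r)}(\vn)\psi,\psi'\rangle\right|^{4\nu}
\le p^{g}\,\frac{Q_{2\nu}(p;\vn)}{\ord(A,p)^{4\nu}}
\ll p^{g}\cdot p^{-2g}=p^{-g},
\]
with the implied constant depending only on $A$; observe that the factor $\ord(A,p)^{4\nu}$ cancels, so no separate lower bound on $\ord(A,p)$ is required. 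Taking $4\nu$-th roots gives $\max_{\psi,\psi'}\left|\langle \opT_p^{(r)}(\vn)\psi,\psi'\rangle\right|\ll p^{-g/(4\nu)}$, uniformly in $r$ and in $\vn$ within the allowed range. Choosing any $\gamma$ with $0<\gamma<g/(4\nu)$ and discarding the finitely many primes for which the implied constant violates the bound $p^{-\gamma}$, we still have a positive-density set of primes, which yields the corollary.

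There is no real obstacle remaining at this stage: the substantive input is Corollary~\ref{cor:exp sum split and Q}, hence ultimately Bourgain's bound for Mordell sums, together with the order estimate of Lemma~\ref{lem:ord}. The only points requiring attention are (i) that intersecting the positive-density set of split primes with the set of Lemma~\ref{lem:ord} (which is of density one \emph{within} the split primes) still leaves a set of positive density, and (ii) that every bound invoked is uniform in $r$ and in $\vn$ for $p>p_0(A)\|\vn\|_2^{2g}$, so that the single exponent $\gamma$ works for all of them simultaneously.
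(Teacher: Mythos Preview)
Your proposal is correct and follows exactly the approach the paper indicates: combine Lemma~\ref{lem:basic inequality new} with Corollary~\ref{cor:exp sum split and Q}, and use Lemma~\ref{lem:ord} (equivalently, the forward reference to Lemma~\ref{lem:GoodPrimes}) to ensure the hypotheses hold for a positive proportion of primes. Your write-up is in fact more detailed than the paper's own treatment, which is essentially the single sentence preceding the corollary; your observation that the factors $\ord(A,p)^{4\nu}$ cancel and your explicit identification of $\gamma<g/(4\nu)$ are useful clarifications.
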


\section{Treatment of zero-divisors}\label{sec:zero divisors}

\subsection{Preliminaries} We remark that if $f_A$ is irreducible then
there are no nontrivial zero-divisors,  
and thus the results of
\S~\ref{sec:Reduction to a counting problem} allow us to complete the
proof. 
However in the case when $f_A$ is separable but not irreducible we need additional considerations to treat vectors  $\vn\in \Z^{2g}$ which are zero-divisors, as defined in \S~\ref{sec:linear indep}.  Thus this section is not needed if one is only interested in the case of matrices $A \in \Sp(2g,\Z)$ with irreducible characteristic polynomials.

\subsection{Remarks on symplectic spaces}
\label{sec:misc}

We next record some basic facts regarding symplectic vector spaces.
Let $W$ be a symplectic space, that is, a vector space with a
non-degenerate alternating bilinear form, which we denote $ \langle \cdot, \cdot \rangle$.  
 We note that a subspace
$U \subseteq V$ is symplectic, that is, the restriction of the symplectic
form to $U$ is non-degenerate, if and only if
$U \cap U^{\perp} = \{\0 \}$.

\begin{lem}
\label{lem:InvarSubspace}
  Let $A \in \Sp(V)$ be a symplectic matrix over $V$.  Assume that $U \subseteq V$ is an $A$-invariant
  subspace on which $A$ acts irreducibly, and assume that $U$ is not
  isotropic.  Then $U$ is symplectic, and its orthogonal complement
  $U^{\perp}$ is also $A$-invariant and symplectic.
\end{lem}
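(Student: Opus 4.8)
The plan is to exploit the characterization recorded just before the lemma: a subspace is symplectic if and only if it meets its own orthogonal complement trivially. First I would show $U$ is symplectic. Consider $U \cap U^{\perp}$. Since $A$ is symplectic, it preserves the form, hence $A$ preserves $U^{\perp}$ whenever it preserves $U$; as $U$ is $A$-invariant, both $U$ and $U^{\perp}$ are $A$-invariant, and therefore so is $U \cap U^{\perp}$. But $U \cap U^{\perp}$ is a subspace of $U$, and $A$ acts irreducibly on $U$, so $U \cap U^{\perp}$ is either $\{\0\}$ or all of $U$. If it were all of $U$, then $U \subseteq U^{\perp}$, which is precisely the statement that $U$ is isotropic — contrary to hypothesis. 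Hence $U \cap U^{\perp} = \{\0\}$, and by the recorded criterion $U$ is symplectic.

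Next I would deduce the statement for $U^{\perp}$. That $U^{\perp}$ is $A$-invariant was already observed above (it follows from $A$-invariance of $U$ together with $A$ preserving the form: if $w \in U^{\perp}$ and $u \in U$, then $\langle Aw, u\rangle = \langle w, A^{-1}u\rangle = 0$ since $A^{-1}u \in U$). For the symplectic property of $U^{\perp}$, I would use that in a symplectic (more generally, any non-degenerate bilinear) space one has $(U^{\perp})^{\perp} = U$ and $\dim U + \dim U^{\perp} = \dim V$. Then
\[
U^{\perp} \cap (U^{\perp})^{\perp} = U^{\perp} \cap U = \{\0\},
\]
where the last equality is what we just proved. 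Applying the recorded criterion once more, $U^{\perp}$ is symplectic. (Equivalently, one can note $V = U \oplus U^{\perp}$ as symplectic spaces once $U \cap U^{\perp} = \{\0\}$ and the dimensions add up, and a direct summand complement of a symplectic subspace in a symplectic space is symplectic.)

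\textbf{Main obstacle.} There is no serious obstacle here; the lemma is essentially a packaging of standard linear-algebra facts about non-degenerate forms. The only point requiring a little care is the dimension/double-orthogonal identity $(U^{\perp})^{\perp}=U$ and $\dim U+\dim U^{\perp}=\dim V$, which hold because the form is non-degenerate (so $v \mapsto \langle v, \cdot\rangle$ identifies $V$ with $V^{*}$, sending $U^{\perp}$ to the annihilator of $U$); I would either cite this as standard or spell out the annihilator argument in one line. The genuinely lemma-specific input — that irreducibility of the $A$-action on $U$ forces the $A$-invariant subspace $U \cap U^{\perp}$ to be $\{\0\}$ or $U$, and that the ``all of $U$'' case is exactly the excluded isotropic case — is the crux, and it is immediate once stated.
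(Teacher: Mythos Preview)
Your proposal is correct and follows essentially the same approach as the paper: both use irreducibility of the $A$-action on $U$ to force the $A$-invariant subspace $U\cap U^{\perp}$ to be trivial (the paper phrases this via the $A$-orbit of a single radical vector $u_0$ spanning $U$, you phrase it via $U\cap U^{\perp}$ being an $A$-invariant subspace of $U$), and then both deduce the claims about $U^{\perp}$ from the non-degeneracy of the form on $V$ together with $U\cap U^{\perp}=\{\0\}$. Your packaging via the double-orthogonal identity $(U^{\perp})^{\perp}=U$ is slightly slicker than the paper's direct degeneracy check on $U^{\perp}$, but the content is the same.
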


\begin{proof}
  Assume for contradiction that the restriction of the above bilinear form $ \langle \cdot, \cdot \rangle$ to $U$ is
  degenerate.  
  Then there exist nonzero $\vu_{0} \in U$ such that
  $\langle \vu, \vu_{0}  \rangle = 0$ for all $\vu \in U$, and hence
  $\langle A^{i}\vu, A^{i}\vu_{0}  \rangle = 0$ for all $\vu \in U$ 
  and all 
  integers $i \geq 0$ (note that here we follow the usual 
    convention of groups acting on the left.)

  Since $A$ is symplectic it is invertible, and
  so is the restriction to $U$, hence
  $\langle \vu, A^{i}\vu_{0}  \rangle = 0$ for all $\vu \in U$. Since the
  span of $A^i\vu_{0}$, $i =0,1, \ldots$, equals $U$ we find that
  $U \subseteq U^{\perp}$, contradicting that $U$ is not isotropic.

  The argument for the first part of second assertion is similar. If
  $\vw \in U^{\perp}$ then $\langle \vu,\vw \rangle = 0$ for all $\vu \in U$,
  and thus $\langle A\vu,A\vw \rangle = 0$ for all $\vu \in U$ and hence, again
  using that $A|_{U}$ (that is, the map induced by $A$ on $U$) 
  is invertible, we have
  $\langle \vu,A\vw \rangle = 0$ for all $\vu \in U$ and thus
  $U^{\perp}$ is 
  $A$-invariant.  Since $U$ is symplectic we have
  $U \cap U^{\perp} =\{0\}$ and thus $W = U \oplus U^{\perp}$ (since
  $\dim(U)+\dim(U^{\perp}) = \dim(W)$ always holds). 
  
  Now, if the
  restriction of the form to $U^{\perp}$ is degenerate there exists
  $\vv \in U^{\perp}$ with $\langle \vv, \vu^{\perp} \rangle = 0$ for all
  $\vu^{\perp} \in U^{\perp}$, and since $\langle \vv,\vu\rangle = 0$ for
  all $\vu \in U$, we find that $\langle \vv, \vw \rangle = 0$ for all
  $\vw \in W$, which contradicts $W$ being symplectic.
\end{proof}

A simple consequence of  Lemma~\ref{lem:InvarSubspace}  is that if $W$ splits into
irreducible $A$-invariant subspaces, then each such subspace is either
symplectic or isotropic. If there exist an invariant isotropic
subspace, there is scarring as shown by Kelmer~\cite[Theorem~1]{KelmerAnnals}. 
Otherwise, we can
decompose $W$ into smaller invariant symplectic subspaces and use a
certain tensor product structure to reduce the dimension, and this
allows us to treat the problem of small zero-divisors.

\subsection{Quantized cat maps and tensor products revisited}
\label{sec:CatMap_TensorProd}

Let $A\in \Sp(2g,\Z)$ have separable characteristic polynomial and let
$N=p$ be a prime. Let us consider an element $\vn \in \Z^{2g}$ for
which the reduction modulo $p$ in
$ \Z^{2g}/(p \Z^{2g}) \simeq \Fp^{2g}$ is not a zero-divisor in the
sense defined in \S~\ref{sec:linear indep}, where we identify
$\Fp^{2g} \simeq \Fp[x]/(f_{A}(x))$. 
In order to bound the matrix 
coefficient $ \langle \TNr(\vn)\psi ,\psi' \rangle$ we need some
further properties of the quantization related to invariant symplectic
subspaces and an associated tensor product structure; these properties
are consequences of $U_{p,r}(A)$ being implicitly defined
via the Weil (or oscillator) representation of $\Sp(2g,\F_{p})$. We
 briefly outline the construction below, for more details 
 see~\cite{KelmerAnnals,gurevich-hadani-symplectic}.

Hereafter,  to simplify the notation in this section we  regard $p$
as a fixed prime, and suppress the dependence on $p$ 
and $\vn$ in most places.  
Let
$W$ be a symplectic vector space over $ \F_p$, and assume that $W$
splits into a direct sum of symplectic subspaces, that is,
$W = W_{1} \oplus W_{2}$ where $W_{1} \perp W_{2}$ (that is, $W_2=W_1^{\perp}$), and the
restrictions of the symplectic form to $W_{1}$ and $W_{2}$ are both
non-degenerate.  

We  
emphasise  that in our application, $W_{1},W_{2}$  depend not only
on $p$ but on $\vn$ as well: we write $\Fp^{2g} \simeq W_{1} \oplus
W_{2}$, where the image of $\vn$ in $W_{2}$ is zero, whereas the image
in $W_{1}$ does  not correspond to  a zero-divisor.

With $V_{i} \subseteq W_{i}$, $i=1,2$, 
denoting maximal isotropic subspaces, we
note that $V = V_{1} \oplus V_{2} \subseteq W $ is a maximal isotropic
subspace.  
We may define the Heisenberg group
\[
H(W) = \{ (f,\vw) :~f \in \F_p,\  \vw \in W\}
\] with the group law given by
\[
(f,\vw) \cdot (f',\vw') = (f+f'+\langle \vw, \vw'\rangle, \vw+\vw')
\]
where $ \langle \cdot, \cdot \rangle$ denotes the symplectic form on $W$
(and similarly $H(W_{i})$ for $i=1,2$).

Let $Z \subseteq H(W_{1}) \times H(W_{2})$ denote the subgroup
\[
Z = \{ (f, \0) \times (-f, \0) :~f \in  \F_p \}.
\]
We find that the surjection $H(W_{1}) \times H(W_{2}) \to H(W)$, given
by 
\[
(f_{1},\vw_{1}) \times (f_{2},\vw_{2}) \to (f_{1}+f_{2},
\vw_{1}+\vw_{2})
\]
factors through $Z$, and that we have the isomorphism
\[
(H(W_{1}) \times H(W_{2}))/Z \cong H(W).
\]

The irreducible non-abelian representations of $H(W)$ arise in the
following way.  Given a non-trivial additive character 
$\chi : \F_p \to {\mathbb C}$, let $K = \F_p \times V \subseteq H(W)$
denote a maximal abelian subgroup of $H(W)$ and extend $\chi$ to $K$
(say, by letting $\chi(f,v) = \chi(f)$). 
We remark that the
character $\chi$ depends on $r$ present in the definition of our
observables $T_{p}^{(r)}$, but the precise dependence is not important;
we only need  that $\gcd(r,p) = 1$ implies that $\chi$ is non-trivial. 
By inducing the extended character $\chi$ from $K$ to $H(W)$, we
obtain an irreducible representation $\rho:~H(W) \to GL( L^{2}(V))$,
and similarly irreducible representations
\[
\rho_{\nu} :~H(W_{\nu}) \to GL( L^{2}(V_{\nu})), \qquad \nu=1,2.
\]
 Now, as
$V = V_{1} \times V_{2}$ we have
$L^{2}(V) = L^{2}(V_{1}) \otimes L^{2}(V_{2})$. Since the action of
$Z$ is trivial, we find that $H(W_{1}) \times H(W_{2})$, and thus
$H(W)$, acts in a natural way on $L^{2}(V_{1}) \otimes L^{2}(V_{2})$.

Briefly, the Weil representation $\pi$ of $\Sp(2g, \F_p)=\Sp(W)$ is
then defined as follows: $\Sp(W)$ acts on $H(W)$, and this induces an
action on the set of irreducible representations of $H(W)$. The action
preserves the central character, and since irreducible representations
of $H(W)$ are determined by their central characters (this holds since
$H(W)$ is a two step nilpotent group), the action on the set of
irreducible representations is, up to intertwining operators,
trivial. In particular, for each $g \in \Sp(W)$, define $\rho^{g}$ by
$\rho^{g}(h) = \rho( g(h))$ (for $h \in H(W)$); we then find that
$\rho \simeq \rho^{g}$, that is, there exists an intertwining operator
(only defined up to a scalar; it  turns out that this
gives projective representation of $\Sp(W)$; for $p$ odd a non-trivial
fact is that it is possible to choose scalars to obtain a true
representation) $\pi(g)$ acting on
$L^{2}(V) = L^{2}(V_{1}) \otimes L^{2}(V_{2})$ so that
$\pi(g) \rho^{g} = \rho \pi(g)$.  Further, we similarly obtain
``smaller'' Weil representations $\rho_{\nu}$ of $\Sp(W_{\nu})$ acting
on $L^{2}(V_{\nu})$, for $\nu=1,2$; to fix compatible central characters it is
convenient to use the maps
$H(W_{\nu}) \to H(W_{1}) \times H(W_{2}) \to H(W)$ to obtain the
action of $\Sp(W_\nu)$ on $L^{2}(V_{\nu})$.

The product $\Sp(W_{1}) \times \Sp(W_{2})$,
under the inclusion 
\[\Sp(W_{1}) \times \Sp(W_{2})
\subseteq\Sp(W),
\]
then acts componentwise on the tensor product
$L^{2}(V_{1}) \otimes L^{2}(V_{2})$. In particular, if $A \in\Sp(W)$
leaves both $W_{1}$ and $W_{2}$ invariant, let
$A_{\nu} \in\Sp(W_{\nu})$ denote the corresponding restrictions of $A$
to $W_{\nu}$, for $\nu=1,2$.  We now note that letting
$\vw=\vw_{1}+\vw_{2}$ denote the reduction of $\vn$ modulo $p$, we can
write,  
\begin{equation}
\label{eq:Upr Tpr}
\begin{split}
& U_{p,r}(A) =  U_{1} (A_{1}) \otimes U_{2}(A_{2}), \\
& \Tp^{(r)}(\vn) = \rho((0,\vw)) = \rho_{1}((0,\vw_{1})) \otimes \rho_{2}((0,\vw_{2})),
\end{split}
\end{equation} 
where $U_{p,r}(A) = \pi(A)$ and $U_{\nu}(A_{\nu}) =
\pi_{\nu}(A_{\nu})$ for $\nu=1,2$.

\subsection{Eigenfunctions of tensor products}
\label{sec:eigenf-tens-prod}
We next describe eigenfunctions of $U_{p,r}(A)$
in terms of the tensor product structure.  With $W,W_1,W_2$ and $V,V_{1},V_{2}$ as in
\S~\ref{sec:CatMap_TensorProd}, for $\nu=1,2$, we may decompose
$L^{2}(V_{\nu})$ into $U_{\nu}(A_{\nu})$-eigenspaces
\[
E_{\nu,\lambda} = \ker( U_{\nu}(A_{\nu}) - \lambda I), \qquad 
\lambda \in \Lambda_{\nu},
\]
(possibly with
multiplicities), where $\lambda$ ranges over the set of eigenvalues
$\Lambda_{\nu}$ of $U_{\nu}(A_{\nu})$. 

Further, for $\nu=1,2$ we may find bases of orthonormal eigenfunctions
$\psi_{\nu,\lambda,i} \in E_{\nu,\lambda}$, $i = 1, \ldots, I_{\nu,\lambda}$, 
for some   positive integers $ I_{\nu,\lambda}= \dim(E_{\nu,\lambda})$ with 
\[
\sum_{\lambda \in \Lambda_{1}}  I_{1,\lambda} + \sum_{\lambda \in \Lambda_{2}}  I_{2,\lambda}=  2g,
\]
which follows from the separability of the characteristic polynomial of $A$.
That is,
\[
U_{\nu}(A_{\nu}) \psi_{\nu,\lambda,i} = \lambda_{\nu,i} \psi_{\nu,\lambda,i}, \qquad 
\nu = 1,2, \  i = 1, \ldots, I_{\nu,\lambda}, 
\]
where
$\lambda_{\nu,i}$ ranges over the whole set $\Lambda_{\nu}$. 
We further note that the set 
\[
\{ \psi_{1,\lambda_{1},i_1} \otimes \psi_{2,\lambda_{2},i_2}:~
\lambda_{\nu} \in \Lambda_\nu, \ i_\nu =  1, \ldots, I_{\nu,\lambda}, \
\nu =1,2\}
\]
 gives an orthonormal eigenbasis of
$L^{2}(V) = L^{2}(V_{1}) \otimes L^{2}(V_{2})$.
In particular, the eigenvalues
of $U_{p,r}(A) = U_{1}(A_{1}) \otimes
U_{2}(A_{2})$ are given by
\[
\Lambda = \{ \lambda_{1} \lambda_{2} :~\lambda_{1} \in \Lambda_{1}, \lambda_{2} \in \Lambda_{2}\},
\] 
and for
$\mu \in \Lambda$, an eigenbasis for $E_{\mu} = \ker( U_{p,r}(A) - \mu I)$
is given by 
\[
\{ \psi_{1,\lambda,i} \otimes \psi_{2,\mu/\lambda,j}:~
\lambda \in \Lambda_1, \ 
i =  1, \ldots, I_{1,\lambda}, \ j =  1, \ldots, I_{2,\mu/\lambda}
\}.
\]
Note that the quantizations
  $U_{p,r}(A), U_{1}(A_{1})$, and $U_{2}(A_{2})$ are only defined up
  to scalars, but once we have chosen scalars for $U_{1}(A_{1})$, and
  $U_{2}(A_{2})$ we may chose the scalar for $U_{p,r}(A)$ so that
  multiplicativity of eigenvalues hold.

We can now bound matrix coefficients corresponding to observables
having zero-divisors. 

\begin{lem} 
\label{lem:Bound ZeroDiv}
Let $A\in \Sp(2g,\Z)$ with a  separable characteristic polynomial, such that there are no $A$-invariant rational istropic subspaces.
 There exists some constant $\gamma>0$, depending only on $A$,   
such  that 
for a positive proportion of primes $p$ the following holds: Let $\psi \in E_{\mu}$ and $\psi' \in E_{\mu'}$   denote two eigenfunctions of $U_{p,r}(A) $, and let $\vw$ denote a
non-trivial zero-divisor.  
Then for $p > p_0(A) \|\vw\|_2^{2g}$, where $p_0(A)$ is as in
Lemma~\ref{lem:syst eigen}, we have 
\[
|\langle \Tp^{(r)}(\vw) \psi,\psi'\rangle| 
\ll
p^{-\gamma} \lVert \psi \rVert_{2} \cdot \lVert \psi' \rVert_{2}.
\]
\end{lem}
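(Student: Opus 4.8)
The plan is to exploit the tensor product structure from \S\ref{sec:CatMap_TensorProd}. Write $\vw = \vw_{1}+\vw_{2}$ as in~\eqref{eq:Upr Tpr}, so that $\Tp^{(r)}(\vw) = \rho_{1}((0,\vw_{1}))\otimes \rho_{2}((0,\vw_{2}))$, and recall that by construction the image $\vw_{2}$ of $\vw$ in $W_{2}$ is zero, while the image $\vw_{1}$ in $W_{1}$ is \emph{not} a zero-divisor for the action of $A_{1}$ on $W_{1}\simeq \F_p[x]/(f_{A_{1}}(x))$. Since $\vw_{2}=\0$, the second factor $\rho_{2}((0,\0))$ is the identity operator on $L^{2}(V_{2})$, so $\Tp^{(r)}(\vw)$ acts as $\rho_{1}((0,\vw_{1}))\otimes I$, i.e. as $\opT_{W_{1}}^{(r)}(\vw_{1})\otimes I$ in the notation of the smaller quantized system attached to $W_{1}$.

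Next I would pass to the eigenspace decomposition. By \S\ref{sec:eigenf-tens-prod}, with $\psi\in E_{\mu}$ and $\psi'\in E_{\mu'}$, and using that $E_{\mu}$ has an orthonormal eigenbasis $\{\psi_{1,\lambda,i}\otimes\psi_{2,\mu/\lambda,j}\}$ (and similarly for $E_{\mu'}$), expand $\psi$ and $\psi'$ in these bases. Because $\Tp^{(r)}(\vw)=\opT_{W_{1}}^{(r)}(\vw_{1})\otimes I$ acts trivially on the second tensor factor, the matrix coefficient $\langle \Tp^{(r)}(\vw)\psi,\psi'\rangle$ collapses to a sum of terms of the form $\langle \opT_{W_{1}}^{(r)}(\vw_{1})\psi_{1,\lambda,i},\psi_{1,\lambda',i'}\rangle$ weighted by inner products of the second-factor components, with the second-factor pairing forcing $\mu/\lambda$ and $\mu'/\lambda'$ to match (which is where the off-diagonal bound on the nontrivial tensor component mentioned in Remark~\ref{rem:non-diag} is needed). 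A Cauchy--Schwarz step then bounds $|\langle \Tp^{(r)}(\vw)\psi,\psi'\rangle|$ by $\lVert\psi\rVert_{2}\lVert\psi'\rVert_{2}$ times $\max_{\varphi,\varphi'}|\langle \opT_{W_{1}}^{(r)}(\vw_{1})\varphi,\varphi'\rangle|$, the maximum over norm-one eigenfunctions of the quantized map $U_{1}(A_{1})$ on $L^{2}(V_{1})$.

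Now I would invoke the non-zero-divisor theory already developed. The matrix $A_{1}$ lies in $\Sp(W_{1})\cong\Sp(2g_{1},\F_p)$ with $g_{1}<g$, has separable characteristic polynomial $f_{A_{1}}$ (a factor of $f_A$), and $\vw_{1}$ is not a zero-divisor in $W_{1}$; moreover $A_{1}$ has no invariant isotropic subspace since $A$ has none. One should check that $A_{1}$ inherits the eigenvalue/ratio hypotheses and the ``good primes'' condition, so that Corollary~\ref{cor:Tpn-bound} applies to $A_{1}$, $\vw_{1}$: for a positive proportion of primes $p$ with $p>p_0(A)\lVert\vw_{1}\rVert_{2}^{2g}$ (and since $\lVert\vw_{1}\rVert_{2}\le\lVert\vw\rVert_{2}$, certainly for $p>p_0(A)\lVert\vw\rVert_{2}^{2g}$), we get $\max_{\varphi,\varphi'}|\langle \opT_{W_{1}}^{(r)}(\vw_{1})\varphi,\varphi'\rangle|\le p^{-\gamma}$. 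Combining with the Cauchy--Schwarz bound yields the claimed estimate.

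I expect the main obstacle to be the bookkeeping in the second paragraph: making precise that $\Tp^{(r)}(\vw)$ really acts as $\opT_{W_{1}}^{(r)}(\vw_{1})\otimes I$ under the identifications of \S\ref{sec:CatMap_TensorProd}, and controlling the off-diagonal pairing on the $W_{2}$-factor so that the reduction to a \emph{maximum} of matrix coefficients for $A_{1}$ (rather than just a diagonal bound) is legitimate --- this is exactly the point flagged in Remark~\ref{rem:non-diag}. A secondary but genuine point is verifying that the hypotheses of Lemma~\ref{lem:ord} and Corollary~\ref{cor:exp sum split and Q} descend from $A$ to the restriction $A_{1}$, including that ``$A$ splits'' and the order conditions for eigenvalues of $A_{1}$ and their ratios hold for a positive proportion of primes; one may need to intersect the relevant positive-density sets of primes over all the symplectic blocks, which is harmless since there are only finitely many.
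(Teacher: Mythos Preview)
Your proposal is correct and follows essentially the same route as the paper's own proof: reduce to $\rho_{1}((0,\vw_{1}))\otimes I$ via the symplectic splitting $W=W_{1}\oplus W_{2}$ with $\vw_{2}=\0$, expand $\psi,\psi'$ in the tensor eigenbasis, use orthogonality on the $W_{2}$-factor to force $\lambda'=\eta(\lambda)=\lambda\mu'/\mu$, then for each fixed $(\lambda,j)$ recognise the inner sum over $i,i'$ as a single matrix coefficient $\langle \rho_{1}((0,\vw_{1}))a,b\rangle$ with $a,b$ eigenfunctions of $U_{1}(A_{1})$, bound it by $p^{-\gamma}\|a\|\,\|b\|$ via Corollary~\ref{cor:Tpn-bound}, and finish with Cauchy--Schwarz over $(\lambda,j)$ together with the bijectivity of $\eta$. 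The two points you flag as obstacles (the off-diagonal reduction and the descent of hypotheses to $A_{1}$) are exactly the ones the paper handles, in the same way; your observation that one intersects finitely many positive-density sets of primes over the possible symplectic blocks is the right way to make the constant $\gamma$ and the set of primes independent of $\vw$.
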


\begin{proof} 
Let $\0\neq \vw\in \Z^{2g}$, which is a zero-divisor. Then there is an $A$-stable rational subspace $W_1$, necessarily symplectic by Lemma~\ref{lem:InvarSubspace}, so that with respect to the decomposition $\vw= (\vw_{1},\vw_{2}) \in W_1\oplus W_2$, where $W_2= W_1^\perp$, the component $\vw_1\in W_1$ of $\vw$ is  not a zero divisor, while the component $\vw_{2}$ in $W_2=W_1^\perp$ is zero. 

For $\mu, \mu' \in \Lambda$, write
\[
\psi = \sum_{(\lambda, i, j) \in \Omega} \alpha_{\lambda,i,j}
\psi_{1,\lambda,i} \otimes \psi_{2,\mu/\lambda,j},
\]
where
\[
\Omega = \{(\lambda, i, j):~
\lambda \in \Lambda_1, \ i =  1, \ldots, I_{1,\lambda}, \ j =  1, \ldots, I_{2,\mu/\lambda}\},
\]
and
\[
\psi' = \sum_{(\lambda', i', j')\in \Omega'}  \beta_{\lambda',i',j'}
\psi_{1,\lambda',i'} \otimes \psi_{2,\mu'/\lambda',j'},
\]
where 
\[
\Omega' = \{(\lambda', i', j'):~
\lambda' \in \Lambda_1, \ i' =  1, \ldots, I_{2,\lambda'}, \ j' =  2, \ldots, I_{2,\mu'/\lambda'}\},
\]
with complex coefficients  $\alpha_{\lambda,i,j}, \beta_{\lambda',i',j'} \in {\mathbb C}$.

Since $\vw_{2}=\0$, by~\eqref{eq:Upr Tpr}, 
we have 
\[
 \Tp^{(r)}(\vw)=\rho((0,\vw))= \rho_{1}((0,\vw_{1})) \otimes \rho_{2}((0,\vw_{2})) =
\rho_{1}((0,\vw_{1})) \otimes \mathrm{Id},
\] 
and thus
\begin{align*}
\langle \rho((0,\vw)) \psi, \psi'\rangle
= \sum_{(\lambda, i, j) \in \Omega} 
   \sum_{(\lambda', i', j')\in \Omega'} & \overline{\alpha_{\lambda,i,j}}
\beta_{\lambda',i',j'}  
\\
\cdot \langle \rho_1((0,\vw_1)) &\psi_{1,\lambda,i},\psi_{1,\lambda',i'}   \rangle 
 \langle \psi_{2,\mu/\lambda,j}, \psi_{2,\mu'/\lambda',j'} \rangle.
\end{align*}
Now, since 
\[
\langle \psi_{2,\mu/\lambda,j}, \psi_{2,\mu'/\lambda',j'}
\rangle =  
\begin{cases} 1&\text{if  $j=j'$ and  $\mu/\lambda = \mu'/\lambda'$,}\\
0&\text{otherwise,}
\end{cases} 
\]
only terms for which $j=j'$ and for which $\lambda' = \eta(\lambda)$
for the  bijection $\eta : \Lambda_{1} \to \Lambda_{1}$
contribute (more precisely, we have
$\eta(\lambda) =  (\lambda \mu')/\mu$. 
Hence 
\begin{equation}\label{eq:rhov and rhov1}
\begin{split}
& \langle\rho((0,\vw)) \psi, \psi'\rangle\\
&\qquad \quad =
\sum_{(\lambda,i,j) \in \Omega} \sum_{i'=1}^{I_{1, \eta(\lambda)}}
\overline{\alpha_{\lambda,i,j}}
\beta_{\eta(\lambda),i',j}
\langle \rho_1((0,\vw_1)) \psi_{1,\lambda,i},\psi_{1,\eta(\lambda),i'}    \rangle.
\end{split}
\end{equation}

We now apply Corollary~\ref{cor:Tpn-bound} with respect to the matrix $A_1$ in 
the decomposition~\eqref{eq:Upr Tpr}, which applies  since $\vw_{1}$ is not a 
zero-divisor.
Then,  by the Cauchy inequality,  for every $\lambda$ and $j$ fixed, we have 
\begin{equation}\label{eq:Bound-prelim}
\begin{split}
& \left| 
  \sum_{i=1}^{I_{1, \lambda}} \sum_{i'=1}^{I_{1, \eta(\lambda)}}
\overline{\alpha_{\lambda,i,j}}
\beta_{\eta(\lambda),i',j}
\langle \rho_1((0,\vw_1)) \psi_{1,\lambda,i},\psi_{1,\eta(\lambda),i'}    \rangle
\right|\\
& \qquad \qquad \ll
p^{-\gamma}
\left( \sum_{i=1}^{I_{1, \lambda}}|\alpha_{\lambda,i,j}|^{2}   \right)^{1/2}
\left( \sum_{i'=1}^{I_{1, \eta(\lambda)}}
|\beta_{\eta(\lambda),i',j}|^{2}\right)^{1/2}
\end{split}
\end{equation}

Finally, using the Cauchy inequality  again, and then recalling that
$\eta$ is a bijection on $\Lambda$, we derive 
\begin{align*}
\sum_{\lambda \in \Lambda} & \sum_{j=1}^{I_{2, \mu/\lambda}} 
\left( \sum_{i=1}^{I_{1, \lambda}}|\alpha_{\lambda,i,j}|^{2}   \right)^{1/2} 
\left( \sum_{i'=1}^{I_{1, \eta(\lambda)}}
|\beta_{\eta(\lambda),i',j}|^{2}\right)^{1/2}
\\
&  \ll
\(\sum_{\lambda \in \Lambda} \sum_{j=1}^{I_{2, \mu/\lambda}}
\sum_{i=1}^{I_{1, \lambda}}|\alpha_{\lambda,i,j}|^{2}  \)^{1/2} 
 \(\sum_{\lambda \in \Lambda} \sum_{j=1}^{I_{2, \mu/\lambda}}
 \sum_{i'=1}^{I_{1, \eta(\lambda)}}
|\beta_{\eta(\lambda),i',j}|^{2} \)^{1/2} 
 \\ &  \qquad \qquad \qquad \qquad \qquad \qquad \qquad \qquad \qquad \qquad =  \lVert \psi \rVert_{2}
\cdot
\lVert \psi' \rVert_{2}
\end{align*} 
and recalling~\eqref{eq:rhov and rhov1} and~\eqref{eq:Bound-prelim},  we conclude the proof. 
\end{proof}

\section{Anatomy of integers}\label{sec:anatomy}
\subsection{Some sums and products over primes} 
It is convenient to denote by  $\log_{k} x$ the $k$-fold iterated logarithm,
that is,  for $x\ge 1$ we set 
\[
\log_1 x =  \log x \mand  \log_k = \log_{k-1} \max\{\log x, 2\}, \quad k =2, 3, \ldots.
\]
We begin by recording an upper bound for Mertens type sums over primes
in progressions, together with a simple consequence.

\begin{lem}
  \label{lem:mertens-for-progression}
  Let $q$ be a prime and let $j \ge 1$ be an integer.
  We have
  \[
  \sum_{\substack{p \le x\\q \mid p^{j} -1} } \frac{1}{p}
  \ll
q^{-1/j} + \frac{\log_2 x }{q},
  \] 
  where the implied constant depends only on $j$. 
\end{lem}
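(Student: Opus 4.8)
The plan is to split the sum according to the size of $p$ relative to $q$. Write $\sum_{p \le x,\, q \mid p^j-1} 1/p = \Sigma_1 + \Sigma_2$, where $\Sigma_1$ ranges over $p \le q^{1/j}$ and $\Sigma_2$ over $q^{1/j} < p \le x$. For $\Sigma_1$ we simply discard the congruence condition and use the trivial Mertens bound $\sum_{p \le y} 1/p \ll \log_2 y + 1$; with $y = q^{1/j}$ this gives $\Sigma_1 \ll \log_2 q \ll \log_2 x$, which is far too lossy, so this crude split will not suffice and must be refined. Instead, for the small-$p$ range I would use the much stronger estimate $\sum_{p \le y} 1/p \le \log_2 y + O(1)$ only after noting that the condition $q \mid p^j - 1$ for $p < q^{1/j}$ forces $p^j \ge q+1 > p^j$ unless... — rather, the clean observation is: if $q \mid p^j - 1$ then $p^j \equiv 1 \pmod q$, so $p$ lies in one of at most $j$ residue classes mod $q$ (the solutions of $t^j \equiv 1$), hence for $p$ in the range $p \le q$ the only way to satisfy the congruence with $p$ itself small is... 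Let me restructure: the effective dichotomy is $p \le q$ versus $p > q$.

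First I would handle $p > q$: here $p$ runs over primes in at most $j$ arithmetic progressions modulo $q$ (namely the roots of $X^j \equiv 1 \bmod q$, of which there are $\gcd(j, q-1) \le j$), so by the Brun–Titchmarsh inequality or a Mertens-type bound for primes in progressions (e.g. Pollack-type estimates, or partial summation against $\pi(x; q, a) \ll x/(\varphi(q)\log(x/q))$), one gets
\[
\sum_{\substack{q < p \le x \\ q \mid p^j-1}} \frac1p \ll \frac{j}{\varphi(q)} \sum_{q < p \le x} \frac{\log q}{p} \cdot \frac{1}{\log q} \ll \frac{\log_2 x}{q},
\]
more carefully via $\sum_{q<p\le x,\ p\equiv a(q)} 1/p \ll \frac{\log_2 x - \log_2 q}{\varphi(q)} + \frac{1}{\varphi(q)} \ll \frac{\log_2 x}{q}$, uniformly over the at most $j$ progressions. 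Summing over the $\le j$ classes absorbs into the implied constant depending on $j$. This produces the $\frac{\log_2 x}{q}$ term.

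Second, for $p \le q$: if $q \mid p^j - 1$ and $p \le q$, then $p^j - 1$ is a positive integer divisible by $q$, so $p^j \ge q + 1$, i.e. $p \ge (q+1)^{1/j} > q^{1/j}$. Hence the range is really $q^{1/j} < p \le q$, and on it I would again just use the trivial Mertens bound on an interval: $\sum_{q^{1/j} < p \le q} 1/p \le \log_2 q - \log_2(q^{1/j}) + O(1) = \log(j) + O(1) = O_j(1)$ — wait, that gives a constant, not $q^{-1/j}$. The sharper route: among $q^{1/j} < p \le q$ the congruence still restricts $p$ to $\le j$ residue classes mod $q$, but since $p \le q$ each class contains at most one such prime, so there are at most $j$ primes total in this range, each of size $> q^{1/j}$, giving $\sum \le j \cdot q^{-1/j}$. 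That is exactly the $q^{-1/j}$ term (with constant depending on $j$).

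The main obstacle is getting the uniformity right in the intermediate range and making sure the Mertens-in-progressions step is genuinely uniform in $q$ down to the level $p \approx q$ (not just $p \gg q^{1+\epsilon}$); this is where one must invoke Brun–Titchmarsh (valid for all $x$ slightly larger than $q$) rather than a Siegel–Walfisz statement, and one should phrase the partial summation carefully so that the "$+1/\varphi(q)$" endpoint terms are controlled. Everything else is bookkeeping: combine $\Sigma_{p\le q} \ll_j q^{-1/j}$ and $\Sigma_{p>q} \ll_j \log_2 x / q$ to obtain the claimed bound.
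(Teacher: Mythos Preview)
Your proposal is correct and follows essentially the same route as the paper: split at $p=q$, observe that $q\mid p^j-1$ confines $p$ to at most $j$ residue classes modulo $q$, bound the small range $p\le q$ by noting there are at most $j$ such primes each exceeding $q^{1/j}$, and handle $p>q$ via Brun--Titchmarsh to get the $\log_2 x/q$ term. The only cosmetic difference is that the paper carries out the large-$p$ Mertens-in-progressions estimate explicitly by a dyadic decomposition $I_k=[2^kq,2^{k+1}q]$ and a direct application of Brun--Titchmarsh on each $I_k$, rather than invoking a packaged partial-summation statement; this is exactly the ``careful phrasing'' you flag as the main obstacle, and it cleanly delivers the required uniformity down to $p\approx q$.
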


\begin{proof}
  For  an integer $k \geq 0$ define the dyadic interval $I_{k} = [ 2^{k}q, 2^{k+1}q ]$, and
  note that $q\mid p^{j}-1$ implies that $p$ must lie in a progression
  $p \equiv a \mod q$, where $0 \le a < q$ ranges over over at most
  $j$ possible values.
  For any $a$, the Brun--Titchmarsh inequality, see, for example,~\cite[Theorem~6.6]{IwKow} or~\cite[Chapter~I, Theorem~4.16]{Ten}, implies that
\[
\sum_{\substack{p \in I_{k} \\ p \equiv a \mod q} } 1/p
\ll \frac{2^{k+1}q }{ q \log( 2^{k+1}q/q)   }    \cdot \frac{1}{2^{k}q}
\ll \frac{1}{q (k+1)}.
\]
If $2^{k}q \le x$ we have $k \ll \log x$, and summing over such $k$
we find that the contribution from primes $p \ge q$ is $O\(q^{-1} \log_{2}x\)$. Since there are at most $j$ primes $p < q$ for which
$q \mid p^{j}-1$, and each such prime satisfies $p > q^{1/j}$ we find
that the contribution from $p < q$ is $O\(q^{-1/j}\)$, and the
proof is concluded.
\end{proof}

We remark that for $j=1$ the bound of Lemma~\ref{lem:mertens-for-progression}
simplifies as 
\begin{equation}
  \label{eq:Mertens Progr}
  \sum_{\substack{p \le x\\ p \equiv 1 \bmod q } } \frac{1}{p}
  \ll \frac{\log_2 x }{q}. 
\end{equation}

We control the contribution from small prime divisors of $p -1$ as
follows.  For a prime $q$ and positive integer $k$, we define $v_q(k)$ to be the positive integer $\ell$ such that
\[
q^\ell \mid k \mand q^{\ell+1} \nmid k.
\]

We fix some $z > 0$ and  let
\begin{equation}
\label{eq:sz}
s_z(N) =  \prod_{p \mid N} \prod_{q \le z} q^{v_q(p-1)}
= \prod_{p \mid N} \prod_{\substack{q \le z\\ q^{\ell} \| p -1}} q^{\ell},
\end{equation} 
that is, $s_z(N)$ is the product of the $z$-smooth parts of $p-1$, as $p$
ranges over all prime divisors of $N$.

\begin{lem}
\label{lem:smooth part}
Let 
\[
Z = \exp\((\log_{2} x) (\log_{3} x)^{3/2}\) \mand z = \(\log_2 x\)^{O(1)}.
 \]  
 For all but $o(x)$ integers $N \le x$ we have $s_z(N) \le Z$.
\end{lem}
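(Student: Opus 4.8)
The plan is to show that the expected value of $\log s_z(N)$ over $N \le x$ is small enough to apply Markov's inequality, where the expectation is dominated by a sum over primes $p \mid N$ and prime powers $q^\ell \| p-1$ with $q \le z$. First I would write
\[
\sum_{N \le x} \log s_z(N)
= \sum_{N \le x} \sum_{p \mid N} \sum_{\substack{q \le z \\ q^{\ell} \| p-1}} \ell \log q
\le \sum_{p \le x} \left( \sum_{\substack{q \le z, \ \ell \ge 1 \\ q^{\ell} \mid p-1}} \log q \right) \sum_{\substack{N \le x \\ p \mid N}} 1,
\]
and bound the inner count of multiples of $p$ by $x/p$. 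This reduces the task to estimating
\[
x \sum_{q \le z} \log q \sum_{\ell \ge 1} \sum_{\substack{p \le x \\ q^{\ell} \mid p-1}} \frac{1}{p}.
\]
For the innermost sum over $p$ I would apply Lemma~\ref{lem:mertens-for-progression} with modulus $q^\ell$ (noting $q^\ell \mid p-1$ forces $p \equiv 1 \bmod q^\ell$, the case $j=1$, so in fact \eqref{eq:Mertens Progr} gives the cleaner bound $\ll q^{-\ell}\log_2 x$). Thus the $p$-sum is $O(q^{-\ell}\log_2 x)$, and summing the geometric series over $\ell \ge 1$ yields $O(q^{-1}\log_2 x)$ for each fixed $q$ (the $\ell=1$ term dominating).

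Next I would sum over $q \le z$: we get
\[
\sum_{N \le x} \log s_z(N) \ll x \log_2 x \sum_{q \le z} \frac{\log q}{q} \ll x \log_2 x \cdot \log z,
\]
using Mertens' theorem $\sum_{q \le z} (\log q)/q = \log z + O(1)$. Since $z = (\log_2 x)^{O(1)}$ we have $\log z \ll \log_3 x$, so the total is $O\!\left(x (\log_2 x)(\log_3 x)\right)$. Consequently the number of $N \le x$ with $\log s_z(N) > \log Z = (\log_2 x)(\log_3 x)^{3/2}$ is, by Markov's inequality, at most
\[
\frac{1}{\log Z} \sum_{N \le x} \log s_z(N)
\ll \frac{x (\log_2 x)(\log_3 x)}{(\log_2 x)(\log_3 x)^{3/2}}
= \frac{x}{(\log_3 x)^{1/2}} = o(x),
\]
which is exactly the claimed bound.

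The argument is essentially routine once the reduction to Mertens sums in progressions is set up; the only place requiring care is making sure the sum over prime-power moduli $q^\ell$ (not just primes $q$) converges and contributes only a constant factor per $q$ — this is why the strong form $q^{-\ell}$ rather than $q^{-1/\ell}$ of the Brun–Titchmarsh estimate matters, and why it is legitimate to invoke the $j=1$ specialization \eqref{eq:Mertens Progr} with modulus $q^\ell$. A secondary point is the choice of the exponent $3/2$ in the definition of $Z$: it is precisely what creates the extra factor $(\log_3 x)^{1/2}$ in the denominator after dividing by $\log Z$, giving a genuine $o(x)$ saving with room to spare; any exponent strictly greater than $1$ would work. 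No delicate estimate beyond Brun–Titchmarsh and Mertens is needed, so I do not anticipate a serious obstacle.
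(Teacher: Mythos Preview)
Your proof is correct and follows essentially the same first-moment-plus-Markov approach as the paper. The only difference is that you treat all prime powers $q^\ell$ uniformly via the Mertens-in-progressions bound (which does extend to prime-power moduli by the same Brun--Titchmarsh argument, though as stated \eqref{eq:Mertens Progr} is only for prime $q$), whereas the paper separates the $\ell=1$ contribution from the $\ell\ge 2$ contribution and bounds the latter by a crude trivial estimate $\sum_{p\equiv 1 \bmod q^\ell} 1/p \ll (\log x)/q^\ell$, arriving at the identical bound $\sum_{N\le x}\log s_z(N)\ll x(\log_2 x)(\log z)$.
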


\begin{proof} From the definition of $s_z(N)$ in~\eqref{eq:sz}, extending over all powers $q^\ell\le x$, $q\le z$, such that $q^\ell\mid (p-1)$, we have
\[
\sum_{N \le x}  \log s_z(N) \ll
\sum_{\substack{q^{\ell} \le x\\ q \le z,~\mathrm{prime}}}
\log(q^{\ell})
\sum_{p  \equiv 1 \mod q^{\ell}} \fl{x/p}= S_1 + S_{\ge 2},
\]
where $S_1$ is the contribution from the terms corresponding to  $\ell=1$ 
and $S_{\ge 2}$ is the contribution from the terms with $\ell \ge 2$. 

For $S_1$, we have
\[
S_1 
\ll x
\sum_{q \le z,~\mathrm{prime} } \log q \sum_{\substack{p\le x\\ p  \equiv 1 \mod q}} \frac{1}{p}.
\]  
Using~\eqref{eq:Mertens Progr} applied to the inner sum, we now derive 
\begin{equation}\label{eq:S1}
\begin{split} S_1 & \ll x
\sum_{q \le z} \log q \frac{\log_{2} x}{q} \\
&  \ll x (\log_2 x)  \sum_{q \le z}   \frac{\log q}{q}
\ll   x (\log_2 x) (\log z).
\end{split}
\end{equation}

The sum  $S_{\ge 2}$ is estimated trivially by discarding the primality 
conditions on $p$ and thus using that 
\[
\sum_{\substack{p\le x\\ p  \equiv 1 \mod q^{\ell}}} \frac{1}{p} \le \sum_{1 \le k \le  x/q^{\ell}} \frac{1}{1+k q^\ell} 
\ll  \frac{\log x}{q^\ell} ,
\] 
which implies, after we abandon the condition of primality on $q$ and the inequality $q \le z$,
\begin{equation}\label{eq:S2}
\begin{split}
S_{\ge 2}&  \ll x(\log x) \sum_{2 \le \ell\le \log x/\log 2} \, \sum_{1\le m  \le x^{1/\ell}}
  \frac{\log(m^{\ell})}{m^\ell}  \\
 & \ll x (\log x)^2 \sum_{2 \le \ell\le \log x/\log 2}   x^{-1 +1/\ell}\ll x^{1/2}  (\log x)^2.
\end{split}
\end{equation}

Clearly the bound on $S_1$ in~\eqref{eq:S1}  dominates  the bound on $S_{\ge 2}$ in~\eqref{eq:S2}.
Hence,  
\[
\sum_{N \le x}  \log s_z(N) \ll  x (\log_2 x) (\log z) \ll x (\log_2 x) (\log_3 x).
\]
Therefore we have
$s_z(N) \ge  Z = \exp\((\log_{2} x) (\log_{3} x)^{3/2}\) $ 
 for at most  
 \[
 O\(x (\log_2 x) (\log_3 x) (\log Z)^{-1} \) = O\(x\(\log_3 x\)^{-1/2}\)
 \]  positive integers $N \le x$.
\end{proof}

\subsection{Good primes and integers} 
We recall that $A \in \Sp(2g,\Z)$.

  We say that a prime $p$ is {\it good\/} if the following two conditions are satisfied:
  \begin{itemize}
\item  the
characteristic polynomial of $A$ is separable and  splits completely modulo $p$;
\item  for the roots  $\lambda_1, \ldots, \lambda_{2g}$ of    the
characteristic polynomial of  $A$ modulo $p$ we have
\[
\ord(\lambda_i,p),\,  \ord(\lambda_i/\lambda_j,p) \ge p^{1/3}, 
\qquad  1\le i, j\le s, \ i \ne j. 
\]
\end{itemize}

We note that the exponent $1/3$ is somewhat arbitrary and can be replaced by any $\gamma < 1/2$. 

Let $\cPg$ denote the set of
good primes. 

Applying  Lemma~\ref{lem:ord},  we  now derive

\begin{lem}
  \label{lem:GoodPrimes}
The set $\cPg$  is of positive density.  
\end{lem}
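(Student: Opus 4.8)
The plan is to combine Lemma~\ref{lem:ord} with the Chebotarev Density Theorem in a single density estimate. First I would recall that, by the Chebotarev Density Theorem applied to the splitting field $L$ of the characteristic polynomial $f_A$ over $\Q$ (a Galois extension since splitting fields are Galois), the set of primes $p$ which split completely in $L$ — equivalently, primes $p$ modulo which $f_A$ splits completely into linear factors — has density $1/[L:\Q] > 0$. Call this set $\cS$. Note also that all but finitely many primes (those dividing $\disc(f_A)$) are unramified, so excluding them changes nothing about densities; in particular $f_A$ is separable modulo $p$ for all but finitely many $p$.

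Next I would invoke Lemma~\ref{lem:ord}: the hypotheses on $A$ (separable characteristic polynomial, no eigenvalue or ratio of distinct eigenvalues a root of unity) are exactly those assumed throughout this part of the paper, and the lemma gives that for almost all split primes $p$ we have $\ord(\lambda_i,p),\ \ord(\lambda_i/\lambda_j,p) > p^{1/2}/\log p$ for $1 \le i \ne j \le 2g$. Since $p^{1/2}/\log p \ge p^{1/3}$ for all sufficiently large $p$, these primes are good in the sense of the definition preceding Lemma~\ref{lem:GoodPrimes} (the first bullet holds because $p \in \cS$ and $p$ is unramified; the second bullet holds by the order bound). Thus the set of split primes that fail to be good is contained in a set of primes of density zero within $\cS$, together with finitely many small primes.

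Putting this together: $\cPg \supseteq \cS \setminus \cE$ where $\cE$ is a set of primes with $\#\{p \le x : p \in \cE\} = o(x/\log x)$. Since $\cS$ has positive density $1/[L:\Q]$, we get
\[
\#\{p \le x : p \in \cPg\} \ge \#\{p \le x : p \in \cS\} - \#\{p \le x : p \in \cE\} = \frac{1 + o(1)}{[L:\Q]} \cdot \frac{x}{\log x},
\]
which is $\ge c\, x/\log x$ for a suitable constant $c > 0$ and all large $x$. Hence $\cPg$ has positive density, as claimed. There is really no serious obstacle here — the lemma is essentially a bookkeeping combination of two already-established facts; the only point requiring a moment's care is checking that ``almost all split primes'' in Lemma~\ref{lem:ord} (i.e., failure for $o(\pi(x))$ primes) still leaves a positive-density subset of the positive-density set of split primes, which is immediate since subtracting a density-zero set from a positive-density set leaves a positive-density set.
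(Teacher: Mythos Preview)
Your proposal is correct and follows essentially the same approach as the paper, which simply states that the lemma follows by ``applying Lemma~\ref{lem:ord}''; you have merely made explicit the Chebotarev step (already invoked inside the proof of Lemma~\ref{lem:ord}) and the elementary inequality $p^{1/2}/\log p \ge p^{1/3}$ for large $p$.
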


Next,  given integers $U \ge V \ge 1$ we define 
\[
\cPg(V,U) = \cPg\cap [V,U].
\] 

We now set 
\begin{equation}\label{eq:UVW}
\begin{split}
 & D(x) =  ( \log x)^{(\log_{3} x)^{2}},\\
&  V(x) =  \exp \(\exp\(\sqrt{\log_2 x}\)\),  \\
& W(x) = x^{\log_{3}x/\log_{2} x} , \\
\end{split} 
\end{equation} 
 and define the following set $\cNg(x)$ of
{\it good\/} integers
\begin{equation}\label{eq:good N}
\begin{split}
\cNg = \{ N :~\exists   p&\in  \cPg(V(N),W(N)) \\
& \textrm{with } N = p M, \ M \in \Z,\ \gcd(p, M) =1,  \\
&\qquad \qquad    
 \gcd\(p-1,\ord(A,M)\) \le  D(N)\}. 
\end{split} 
\end{equation}
We then set 
\[
\cNg(x) = \cNg \cap [1,x].
\]

The next statement is our main tool.

\begin{lem}
\label{lem:smallgcd}  
  We have
\[
  \sharp \cNg(x) = x + o(x).
\]
\end{lem}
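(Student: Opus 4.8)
The goal is to show that the set $\cNg$ of good integers has density $1$, i.e. all but $o(x)$ integers $N \le x$ lie in $\cNg$. The strategy is to show that a \emph{typical} $N \le x$ has a prime factor $p \in \cPg(V(N), W(N))$ which is, roughly speaking, ``isolated'' from the rest of $N$ in the sense that $\gcd(p-1, \ord(A,M)) \le D(N)$, where $N = pM$. I would split the complement $[1,x] \setminus \cNg(x)$ into two parts: (1) integers $N$ with \emph{no} good prime factor in the range $[V(N), W(N)]$; (2) integers $N$ that do have such a prime factor $p$, but for \emph{every} such $p$ one has $\gcd(p-1, \ord(A,M)) > D(N)$. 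I expect (1) to be handled by a fairly standard sieve/anatomy argument and (2) to be the genuine obstacle.

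\textbf{Handling (1).} Since $\cPg$ has positive density by Lemma~\ref{lem:GoodPrimes}, the sum $\sum_{p \in \cPg} 1/p$ diverges, and more precisely $\sum_{V \le p \le W,\ p \in \cPg} 1/p \gg \log(\log W / \log V)$. With the choices in~\eqref{eq:UVW}, $\log\log W(x) = \log\log x + \log(\log_3 x / \log_2 x) \sim \log_2 x$ while $\log\log V(x) = \sqrt{\log_2 x}$, so this partial sum is $\gg \log_2 x$, tending to infinity. A standard sieve bound (or a second-moment / Turán–Kubilius type estimate, as in the Hardy–Ramanujan theorem) then shows that the number of $N \le x$ having \emph{no} prime factor from $\cPg$ in $[V(x), W(x)]$ is $O\!\left(x / \exp(c \log_2 x)\right) = o(x)$ for a suitable constant $c > 0$; one must be slightly careful that $V(N), W(N)$ depend on $N$, but since $N \le x$ and these functions are monotone one can replace them by $V(x^{1/2})$ and $W(x^{1/2})$ at the cost of a negligible set, or argue dyadically.

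\textbf{Handling (2) — the main obstacle.} Here I must bound the count of $N = pM \le x$ with $p \in \cPg(V,W)$ and $\gcd(p-1, \ord(A,M)) > D(N)$. The key point is that $\ord(A,M) \mid \lcm$ of the orders of $A$ modulo prime powers dividing $M$; and $\gcd(p-1, \ord(A,M))$ large forces $p-1$ to share a large common factor with the order of $A$ modulo some prime power $q^a \| M$. Crucially, $\ord(A,q^a)$ divides $q^{a-1} \cdot |\GL_{2g}(\F_q)|$ (or, more relevantly, a quantity like $q^{a-1}(q^{d}-1)$ for the relevant field extension degree $d \le 2g$), so a prime $\ell$ dividing $\gcd(p-1, \ord(A,M))$ either divides $q^{a-1}$ (the ``wild'' part) or divides $q^{d_i}-1$ for some $i$ and some $q \mid M$. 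One then uses: (a) Lemma~\ref{lem:smooth part} to discard the $z$-smooth contribution to $\ord(A,M)$ (the wild part $q^{a-1}$ and small primes), so that the surviving common divisor $\gcd(p-1, \ord(A,M)) > D(N)$ must contain a large prime power factor $\ell^b$ with $\ell > z = (\log_2 x)^{O(1)}$; and (b) Lemma~\ref{lem:mertens-for-progression} to bound, for each such large $\ell$ (dividing both $p-1$ and $q^{d_i}-1$), the number of pairs $(p, q)$ — the point being $\sum_{\ell > z} \big(\sum_{q \equiv 1 \text{ suitably}} \tfrac1q\big)\big(\sum_{p \equiv 1 (\ell)} \tfrac1p\big)$ is small because each inner sum contributes a factor $\ll \ell^{-1/(2g)} + (\log_2 x)/\ell$, and summing $\ell^{-1-1/(2g)}$ over $\ell > z$ gives $z^{-1/(2g)} = o(1)$ once $z$ is a large enough power of $\log_2 x$. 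Summing over the at most $O(\log x)$ possibilities for the exponents $a, b$ and the $O(1)$ degrees $d_i$ keeps everything $o(x)$. The subtle bookkeeping is to convert ``$\gcd(p-1,\ord(A,M)) > D(N)$'' into ``there is a prime power $\ell^b > D(N)^{1/\omega}$ or a product of $\gg \log D(N) / \log_2 x$ large primes dividing both $p-1$ and $\ord(A,M)$'', using $\omega(m) \ll \log m / \log_2 m$ from~\eqref{eq:omega k}; the threshold $D(x) = (\log x)^{(\log_3 x)^2}$ is chosen exactly so that $\log D(x) / \log z \to \infty$, forcing enough large prime factors to make the Mertens sums win.

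\textbf{Assembling.} Combining, the number of $N \le x$ not in $\cNg$ is bounded by the (1)-count plus the (2)-count plus the exceptional $o(x)$ sets from Lemma~\ref{lem:smooth part} and from the monotonicity reductions, all of which are $o(x)$. Hence $\sharp \cNg(x) = x + o(x)$, as claimed. I would present this in two lemmas mirroring (1) and (2), then combine; the heart of the matter is step (2), where one trades the largeness of the gcd for the existence of an unexpectedly large prime $\ell$ simultaneously dividing $p-1$ and the order of $A$ modulo another prime factor of $N$, an event that the Brun–Titchmarsh–based Mertens estimates show to be rare.
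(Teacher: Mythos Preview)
Your proposal is essentially correct and follows the same overall strategy as the paper: split the complement into (1) integers with no good prime factor in the range $[V,W]$, handled by a sieve using the positive density of $\cPg$ (Lemma~\ref{lem:GoodPrimes}) and the divergence of $\sum_{p\in\cPg(V_0,W_0)}1/p\gg\log_2 x$; and (2) integers for which the $\gcd$ condition fails, handled by combining Lemma~\ref{lem:smooth part} with Lemma~\ref{lem:mertens-for-progression}. The paper also makes the dyadic/monotonicity reduction you anticipate, replacing $V(N),W(N),D(N)$ by fixed $V_0,W_0,D_0$.

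The one place where you over-engineer matters is the ``subtle bookkeeping'' paragraph. You do not need to extract a prime power $\ell^{b}>D(N)^{1/\omega}$ or argue with products of many large primes. The paper's observation is cleaner: once Lemma~\ref{lem:smooth part} gives $s_{z}(N)<D_{0}$, the $z$-smooth part of $p-1$ (which is $s_{z}(p)\le s_{z}(N)$) is already $<D_{0}$; hence if $\gcd(p-1,\ord(A,M))>D_{0}$ it cannot be entirely $z$-smooth, so it contains \emph{a single} prime $q>z$. After also discarding the $O(x/z)=o(x)$ integers with a repeated prime factor above $z$ (which disposes of the ``wild'' case $q=\ell$ that you correctly flag), this $q$ divides $\ord(A,\ell)\mid\prod_{j\le 2g}(\ell^{j}-1)$ for some prime $\ell\mid M$. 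The triple sum
\[
\sum_{q>z}\ \sum_{\substack{p\le x\\p\equiv 1\ (q)}}\ \sum_{\substack{\ell\le x\\q\mid \ell^{j}-1}}\frac{x}{p\ell}
\]
is then bounded exactly as you indicate, applying Lemma~\ref{lem:mertens-for-progression} to each inner sum; the choice $z=(\log_{2}x)^{2g+1}$ makes both resulting terms $o(1)$. So your route and the paper's coincide in all essentials; you have merely described a harder version of step~(2) than is actually needed.
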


\begin{proof}  It is certainly enough to show that 
\[\sharp \(\cNg \cap [x/2,x]\) = x/2 + o(x).
\]
In turn, we  set 
\[
 D_0 = D(x/2),   \quad V_0 = V(x),  \quad W_0 = W(x/2) , 
\]  
such that 
\[
[V_0,W_0]\subseteq [V(N),W(N)],
\]  
for all $N \in [x/2, x]$
and define the following set $\tcNg(x)$ of {\it good\/} integers $N \le x$:
\begin{align*}
\tcNg(x) = \{ N \le x &:~\exists  p\in  \cPg(V_0,W_0) \textrm{ with }N = p M, \ M \in \Z, \\
&\quad  \gcd(p, M) =1, \
 \gcd\(p-1,\ord(A,M)\)  \le  D_0\}. 
\end{align*} 
Clearly 
\[
\tcNg(x)  \cap [x/2,x] \subseteq  \cNg \cap [x/2,x],
\] 
hence 
it is enough to show that 
\begin{equation}\label{eq:tilde-N}
  \sharp\tcNg(x) = x + o(x).
\end{equation}  
That is, in the above,  we first consider integers $N$ in a dyadic interval. This allows us to replace 
$ \cPg(V(N),W(N))$ with  $\cPg(V_0,W_0)$. After this is done,  we can bring back integers below $x/2$ as well: if the exceptional set 
is of size $o(x)$ on $[1,x]$  then so it is on $[x/2,x]$ and we are done. 
Thus indeed we only need to establish~\eqref{eq:tilde-N}.

 First recall that by Lemma~\ref{lem:GoodPrimes}  the set of good
 primes $\cP_{good}$   is of positive density. Therefore, there are 
 some constants $C, c>0$ (depending on the matrix $A$)  such that for $Z \ge 2$ the set  $\cPg(Z, CZ)$ contains 
 at least $cZ/\log Z + O(1)$ primes, that is, 
 \begin{equation}\label{eq: Card P(Z, CZ)}
  \sharp \cPg(Z, CZ) \ge c\frac{Z}{\log Z} + O(1).
\end{equation}

Taking $x$ sufficiently large such that the interval $[2,W_0]$ contains $I$ non-overlapping  intervals of the form
$[C^i, C^{i+1} )$, $i =1, \ldots, I$, where
\[
\log W_0 \ll I \ll \log W_0, 
\] we  derive
\begin{align*}
  \sum_{p \in \cPg\(V_0,W_0\)} 1/p  & \ge   \sum_{p \in \cPg(2,W_0)} 1/p -   \sum_{p \le V_0,~\mathrm{prime}}1/p  \\
  & \ge  \sum_{i=1}^I  \sum_{p \in \cPg\(C^i , C^{i+1}\)} 1/p -   \sum_{p \le V_0,~\mathrm{prime}}1/p  \\
    & \ge  \sum_{i=1}^I  C^{-i} \sharp \cPg\(C^i , C^{i+1}\)  -   \sum_{p \le V_0,~\mathrm{prime}}1/p . 
    \end{align*}

Next, recalling~\eqref{eq: Card P(Z, CZ)}  and the Mertens formula (or 
simply using~\eqref{eq:Mertens Progr}  with $q =1$), we obtain 
\begin{align*}
 \sum_{p \in \cPg\(V_0,W_0\)} 1/p  &  
\ge  \sum_{i=1}^I  C^{-i} \(c\frac{C^i}{i \log C} + O(1)\) + O(\log_2 V_0) \\
 & \ge  \frac{c}{\log C}   \sum_{i=1}^I \frac{1}{i}  + O(\log_2 V_0)  \ge  \frac{c}{\log C}  \log I+ O(\log_2 V_0) \\
  & \gg \log_{2} W_0 + O(\log_2 V_0)  \gg  \log_{2} W_0 
 \gg \log_{2} x. 
\end{align*}
Therefore,
\[
\prod_{p \in \cPg\(V_0,W_0\)} (1-1/p) \ll \exp\left( - \  \sum_{p \in \cPg\(V_0,W_0\)} 1/p   \right) \le 
(\log x)^{-\gamma}
\]
for some $\gamma>0$, which depends only on $C$ and $c$, and thus only on the matrix $A$. 
 Thus, by  the classical Brun sieve, see, for example,~\cite[Chapter~I, Theorem~4.4]{Ten}, 
 almost all $N \le x$ are divisible by some prime 
$p \in \cPg\(V_0,W_0\)$.

We now set $z = \(\log_2 x\)^{2g+1}$ and note that $D_0 >  Z$, where $Z$ is as 
in Lemma~\ref{lem:smooth part}.  Thus  Lemma~\ref{lem:smooth part}   allows us to 
discard $o(x)$ positive integers $N \le x$ with 
\[
s_z(N) \ge  D_0,
\] 
where $s_z(N)$ is defined by~\eqref{eq:sz}. 
Hence for the remaining integers $N \in [x/2,x]$ we have 
\[
s_z(N) < D_0 \le D(N).
\]

We also discard $O(x/V_0)$ integers $N\le x$ which are divisible by $p^2$ 
for some prime $p > V_0$. Hence, for the remaining integers $N$, 
for any  $p\in \cPg(V_0,W_0)$  with $p \mid N$ we now have $\gcd(p, N/p) =1$.

Furthermore, for the remaining  $N \le x$, 
we see that if  
\[ 
\gcd\(p-1,\ord(A,N/p)\) >D_0,
\]  then, since $s_z(p) <  s_z(N) < D_0$, 
there is a prime $q> z$ with  $q\mid p-1$ and another prime $\ell \mid N$, $\ell \ne p$,
such that 
\[
q \mid \ord(A,\ell) \mid  \prod_{j=1}^{2g} \(\ell^{j}-1\). 
\]

Hence to conclude the proof   it suffices to show that for every $j =1, \ldots, 2g$ we have 
\begin{equation}\label{eq:Bad N}
  \sum_{\substack{q > z,  \ \mathrm{prime}}}\ 
  \sum_{\substack{ p \leq x,  \ \mathrm{prime} \\ p \equiv 1 \mod q}}\
  \sum_{\substack{ \ell \leq x/p,  \ \mathrm{prime}\\p \ne \ell\\ q \mid  \ell^j-1}}
  \frac{x}{\ell p}
  = o(x). 
\end{equation}

To establish~\eqref{eq:Bad N}, we first discard the condition $\ell  \ne p$, and extend the summation over $\ell$ up to $\ell\le x$.
Then we  recall  Lemma~\ref{lem:mertens-for-progression} (for the sum over $\ell$) and its special case~\eqref{eq:Mertens Progr}  
 (for the sum over $p$)  and derive 
\begin{align*}
  \sum_{q > z, \ \mathrm{prime}}\ 
  \sum_{\substack{ p \leq x,  \ \mathrm{prime}\\ p  \equiv 1 \mod q}} \
  \sum_{\substack{ \ell \leq x/p,  \ \mathrm{prime} \\p  \ne \ell\\ q \mid  \ell^j-1}}
  \frac{x}{\ell p} 
  & \le x  \sum_{q > z,\ \mathrm{prime}}\  
  \sum_{\substack{ p \leq x,  \ \mathrm{prime} \\ p \equiv 1 \mod q}}\frac{1}{p} \
  \sum_{\substack{\ell \leq x,  \ \mathrm{prime}\\ q \mid  \ell^j-1}}
  \frac{1}{\ell} \\
  &  \ll  x \sum_{q>z,  \ \mathrm{prime} }   \frac{\log_2 x }{q} \(q^{-1/j} + \frac{\log_2 x }{q}\)\\
    & \ll  x\( \frac{\log_2 x }{z^{1/j}} + \frac{(\log_2 x)^2 }{z^2}\) \\
       & \ll  x\( \frac{\log_2 x }{z^{1/(2g)}} + \frac{(\log_2 x)^2 }{z^2}\) . 
\end{align*}
Recalling our choice $z = \(\log_2 x\)^{2g+1}$, we  obtain~\eqref{eq:Bad N}.

Thus all together we have discarded $o(x)$ integers and all remaining integers $N\le x$ belong to $\tcNg$. 
Hence we see that~\eqref{eq:tilde-N} holds,  and the result follows. 
\end{proof}

\section{Proof of Theorem~\ref{thm:integers}}

We recall the definition of good integers given by~\eqref{eq:good N}.
We now show that~\eqref{QUE for all N's} holds with $\cN = \cNg$,  that is, 
\[
\lim_{\substack{N\to \infty\\ N\in\cNg}}\max_{\psi_N,\psi'_N }
\left| \langle \OPN(f)\psi_{ N}, \psi_{N}'  \rangle -\langle \psi_N,\psi_N'\rangle \int_{\TT^{2g}}
  f(\vx)d\vx  \right| = 0,
\] 
where the maximum is taken over all pairs of normalized eigenfunctions $\psi_N,\psi'_N$ of $U_N(A)$. 
By Lemma~\ref{lem:smallgcd},  the set $\cNg$ is of full density and hence 
this is sufficient for our goal. 

As in~\cite{BourgainGAFA, KR2001}, using the rapid decay of coefficients of 
 $f\in C^\infty(\TT^{2g})$, it suffices to show that
 \[
\max_{\substack{\vn   \in \Z^{2g}\\
    0 < |\vn| \le L(N)}}
\max_{ \psi_N,\psi_N' }
\left| \langle\TN(\vn) \psi_N , \psi_N'  \rangle\right| \to 0
\]
as $N \to \infty$, $N\in\cNg$, 
with $\psi_N, \psi_N'$ running over all normalized eigenfunctions of $U_N(A)$, with a slowly growing function $L(N)\to \infty$. 

We  recall the definition of the functions $D(x)$, $V(x)$ and $W(x)$ as in~\eqref{eq:UVW}. 
In particular, we take $L(N)$  to grow sufficiently slowly to
guarantee that for 
any $p \in \cPg(V(N),W(N))$ and  
for any $\vn  \in \Z^{2g}$ with  $0 < |\vn| \le L(N)$ the conditions
of Corollary~\ref{cor:Tpn-bound}  and Lemma~\ref{lem:Bound ZeroDiv}
are satisfied provided that $N$ is sufficiently large.

We now  fix some $N \in \cNg$ and choose a prime $p$ which satisfies all properties in~\eqref{eq:good N}.

We set 
\[d = \gcd\(\ord(A,p), \ord(A, M )\).\]
Clearly
\[
d \le \gcd (p - 1, \ord(A, M )) \le D(N).
\] 
Now, applying~\eqref{eq: A->A^d}, and then
Lemma~\ref{lem:tensor-prod-bound} (with $N_1 = p$ and with $A^d$
instead of $A$), we derive
\begin{equation}\label{eq:TN vs Tp}
  |\langle \TN(\vn)\psi, \psi' \rangle|  \le  \max_{\varphi, \varphi' \in \Phi_{p, r}}
  | \langle  \opT_{p}^{ r} (\vn)\varphi, \varphi' \rangle |, 
\end{equation}  where $r$ is some integer coprime to $p$ and $\varphi$, $\varphi'$ range over all normalized eigenfunctions of  $U_{p, r}(A^d)$.  
  
 We note that  the roots  of the   characteristic polynomial of $A^d$ 
 are  $\lambda_1^d, \ldots, \lambda^d_{2g}$, where 
 $\lambda_1, \ldots, \lambda_{2g}$  are the roots  of the   characteristic polynomial of $A$ modulo $p$ 
 and we also have
\[
\ord(\lambda_i^d,p),\,  \ord(\lambda_i^d/\lambda_j^d,p) \ge p^{1/3} d^{-1} \gg p^{1/4}, 
\qquad  1\le i, j\le 2g, \ i \ne j,
\]
since obviously for a sufficiently large $N$ we have 
\[
d\le D(N) \le V(N)^{1/12} \le p^{1/12} .
\]
 Thus the conditions of Corollary~\ref{cor:exp sum split and Q} are satisfied. Combining  Corollary~\ref{cor:Tpn-bound} and Lemma~\ref{lem:Bound ZeroDiv} (when $\vn$ is a zero divisor) with~\eqref{eq:TN vs Tp}
 we conclude the proof.

\end{document}